\definecolor{aleacolor}{rgb}{0.16,0.59,0.78}
\renewcommand{\cite}{\citet}
\theoremstyle{plain}
\newtheorem{theorem}{Theorem}[section]                                          
\newtheorem{proposition}[theorem]{Proposition}                          
\newtheorem{lemma}[theorem]{Lemma}
\theoremstyle{definition}
\newtheorem{definition}[theorem]{Definition}
\theoremstyle{remark}
\newtheorem{remark}[theorem]{Remark}
\newtheorem{example}[theorem]{Example}
\makeatletter \@addtoreset{equation}{section} \makeatother
\renewcommand{\Pr}{\mathbb{P}}
\newcommand{\Z}{\mathbb{Z}}
\newcommand{\R}{\mathbb{R}}
\newcommand{\E}{\mathbb{E}}
\newcommand{\XX}{\mathcal{X}} 
\newcommand{\WW}{\mathcal{W}} 
\newcommand{\YY}{\mathcal{Y}} 
\newcommand{\RR}{\mathcal{R}}
\newcommand{\abs}[1]{\lvert#1\rvert} 
\newcommand{\Abs}[1]{\left\lvert#1\right\rvert}
\newcommand{\tn}{\textnormal}
\newcommand{\wt}{\widetilde}
\newcommand{\wh}{\widehat}
\begin{document}

\title[Diffusion ratchets with locally negative drift]{Asymptotic properties of
  certain 
  diffusion ratchets \\ with  locally negative drift} 

\author{Andrej Depperschmidt} 
\author{Sophia G\"otz} 

\address{University of Freiburg,  
  Abteilung f\"ur Mathematische Stochastik \newline  
  Eckerstra\ss e 1, 
  79104 Freiburg, Germany }
\email{depperschmidt@stochastik.uni-freiburg.de,   sophiagoetz@gmx.net}  



\thanks{Research supported by the BMBF,
    Germany, through FRISYS (Freiburg Initiative for Systems biology),
    Kennzeichen 0313921.}

\subjclass[2000]{primary 92C37; secondary 60J65, 60G55, 60K05} 
\keywords{Reflecting Brownian motion with negative drift; Reflecting
  Ornstein-Uhlenbeck process; Ratcheting mechanism; Diffusion ratchet; Protein
  translocation.}  

\begin{abstract}
  We consider two reflecting diffusion processes $(X_t)_{t \ge 0}$ with a moving
  reflection boundary given by a non-decreasing pure jump Markov process
  $(R_t)_{t \ge 0}$. Between the jumps of the reflection boundary the diffusion
  part behaves as a reflecting Brownian motion with negative drift or as a
  reflecting Ornstein-Uhlenbeck process. In both cases at rate $\gamma(X_t
  -R_t)$ for some $\gamma \ge 0$ the reflection boundary jumps to a new
  value chosen uniformly in $[R_{t-},X_t]$. Since after each jump of the
  reflection boundary the diffusions are reflected at a higher level we call the
  processes \emph{Brownian ratchet} and \emph{Ornstein-Uhlenbeck ratchet}. Such
  diffusion ratchets are biologically motivated by passive protein transport
  across membranes. The processes considered here are generalisations of the
  Brownian ratchet (without drift) studied in
  \citep{DepperschmidtPfaffelhuber:2010}. For both processes we prove a law of
  large numbers, in particular each of the ratchets moves to infinity at a
  positive speed which can be computed explicitly, and a central limit
  theorem.
\end{abstract}

\maketitle

\section{Introduction}
\label{sec:introduction}

Reflecting diffusion processes constitute an important class of stochastic
processes that appear in various applications. We consider two particular
examples of a \emph{diffusion ratchet} variants of which were introduced in
\citep{SimonPeskinOster1992} and \citep{PeskinOdellOster:1993} motivated by
protein transport across cell membranes. Generally speaking a diffusion ratchet
is a diffusion process reflected at a non-decreasing jump process. The name
\emph{ratchet} is justified by the fact that each jump prevents the diffusion
from attaining lower values. In a sense a jump of the reflection boundary
process can be thought of as a \emph{click} of a ratchet. In \citep{MR2223904} a
diffusion ratchet (modelling a molecular motor) in which a particle moves
according to a Brownian motion between equally spaced (deterministic) barriers
is studied. The particle can cross such barriers from left to right but is
reflected if it hits a barrier to its left. The two models that we consider in
the present note are both generalisations of the diffusion ratchet studied in
\citep{DepperschmidtPfaffelhuber:2010}. In the model studied there a particle
moves according to a reflecting Brownian motion and the reflection boundary
jumps a rate proportional to the distance between the particle and the current
reflection boundary. At jump times the new reflection boundary is chosen
uniformly between the old one and the position of the particle.

\medskip 
Let us now introduce the models that we consider here, then briefly explain the
biological motivation and finally state our results.

\subsection{The models} 
\label{sec:model}

Let $(\XX, \RR) \coloneqq (X_t, R_t)_{t\geq 0}$ be a time-homogeneous Markov
process starting in $(X_0, R_0) = (x_0,0)$, for some $x_0\ge 0$. Here $\XX =
(X_t)_{t\ge 0}$ is a diffusion process reflected at a non-decreasing jump
process $\RR = (R_t)_{t \ge 0}$. Given that $(\XX, \RR)$ is in $(X_t,R_t)$ at
time $t$, the reflection boundary process jumps at rate $\gamma(X_t-R_t)$ for
some $\gamma \ge 0$. If $\tau$ is a jump time of the reflection boundary then
the new position is uniformly distributed on the interval $[R_{\tau-}, X_\tau]$.
By this dynamics, $R_t\leq X_t$ for all $t\geq 0$, almost surely. In principle
the above description works with any reflecting diffusion between the jumps. As
mentioned earlier we study here two particular cases and to distinguish them we
write $(\XX, \RR)$ and $(\wh \XX, \wh \RR)$ for the corresponding processes.

\medskip    
\begin{enumerate}
\item[(I)] If for $\mu \ge 0$ the process $\XX = (X_t)_{t\ge 0}$ is a Brownian  
  motion with negative infinitesimal drift $-\mu$, unit variance (see
  Section~\ref{sec:rbmd}) and reflection boundary process $\RR= (R_t)_{t \ge 0}$
  then we refer to the process $(\mathcal X, \mathcal R)$ as \emph{the
    $(\gamma,\mu)$-Brownian ratchet}. 
 
\item[(II)] If for $\mu \ge 0$ the process $\wh\XX = (\wh X_t)_{t\ge 0}$ is an
  Ornstein-Uhlenbeck process with infinitesimal drift $-\mu x$, unit variance
  (see Section~\ref{sec:refl-OU}) and reflection process $\wh \RR=(\wh R_t)_{t
    \ge 0}$ then we refer to $(\wh \XX, \wh \RR)$ as \emph{the
    $(\gamma,\mu)$-Ornstein-Uhlenbeck ratchet}.
\end{enumerate}

 Whenever we want to stress the dependence on the parameters, we write
  $(\mathcal X^{(\gamma,\mu)}, 
  \mathcal R^{(\gamma,\mu)}) = (X_t^{(\gamma,\mu)}, R_t^{(\gamma,\mu)})_{t\geq
    0}$ for the $(\gamma,\mu)$-Brownian ratchet and $(\wh{\mathcal 
    X}^{(\gamma,\mu)}, \wh{\mathcal R}^{(\gamma,\mu)}) = (\wh
  X_t^{(\gamma,\mu)}, \wh R_t^{(\gamma,\mu)})_{t\geq 0}$ for the
  $(\gamma,\mu)$-Ornstein-Uhlenbeck ratchet. 

\begin{figure}[h]
  \begin{tikzpicture}
    \draw[thick,rounded corners=5pt] 
    (0,1.48) -- (0,0) -- (3,0) -- (3,3) -- (0,3) -- (0,1.62);  
    
    \draw[line width=1pt] 
    (-1.7,2.4) .. controls (0,1) and (-0.6,0.5) .. 
    (-0.8,0.4)..  controls (-1,0.3) and (-1.6,0.2) ..  
    (-1.3,0.6) .. controls (-1,1) and (0.2,1.7) .. 
    (-0.2,2) ..   controls (-0.6,2.3) and (-1.1,2.6) .. 
    (-1,1.9)  ..  controls (-0.9,1.5) and (-0.2,1.65) .. 
    (0,1.55) ..   controls (0.2,1.45) and (0.6,0.7) .. 
    (0.7,0.8) ..  controls (0.8,0.9) and (0.8,2) .. 
    (1,1.9) ..    controls (1.2,1.8) and (1.35,1.6) ..
    (1.5,1.4) ..  controls (1.65,1.2) and (1.7,2) .. (1.8,2.5);   
    
    \filldraw [gray] (0.2,0.3) circle (1.5pt)
                     (0.8,0.4) circle (1.5pt)
                     (1.1,2.5) circle (1.5pt) 
                     (1.6,1)   circle (1.5pt)
                     (0.3,2)   circle (1.5pt)
                     (0.41,1.07)   circle (1.5pt)
                     (1.07,1.85)   circle (1.5pt)
                     (2.6,0.9) circle (1.5pt)
                     (2.3,2.7) circle (1.5pt); 
    
                     
    \draw[thick,rounded corners=5pt] 
    (7,1.48) -- (7,0) -- (10,0) -- (10,3) -- (7,3) -- (7,1.62);  
    
    \draw[line width=1pt] (5,1.55) -- (9.5,1.55);

    \draw[line width=0.5pt] (9.5,1.80)--(9.5,1.30);
    \draw[line width=0.5pt] (8,1.80)--(8,1.55);
    
    \draw[<->] (8.05,1.75) -- (9.45,1.75); 
    \draw[<->] (7.05,1.35) -- (9.45,1.35); 
    
    \draw[<-] (5.05,1.75) --(6.9,1.75); 
    \draw (8.8,1.95) node  {$R_t$} 
    (8.2,1.1)   node {$X_t$}
    (5.7,1.95) node {\textit{drift}} ;
    
    \filldraw [gray] (9.2,0.3)    circle (1.5pt)
                     (7.8,0.4)    circle (1.5pt)
                     (8.1,2.5)    circle (1.5pt) 
                     (8.6,1)      circle (1.5pt)
                     (8,1.55)     circle (1.5pt)
                     (9,1.55)     circle (1.5pt)
                     (9.3,2)      circle (1.5pt)
                     (9.41,1.07)  circle (1.5pt)
                     (7.5,2.75)   circle (1.5pt); 
\end{tikzpicture}
\caption{A diagram of the ratcheting mechanism for protein transport on the left
and of a reflecting diffusion with negative local drift on the right. 
}
\label{fig:protein}
\end{figure}
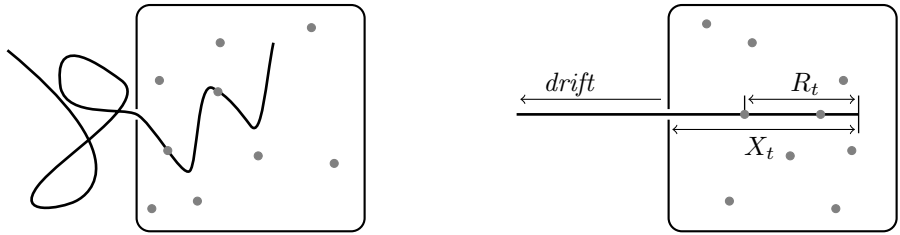

\subsection{Biological motivation}
\label{sec:biol-motiv}
Inside a typical cell different proteins are involved in many processes. They
usually need to be transported after or during the \emph{translation} 
(production) to various locations at which they are required. Depending on the
protein and its functions there are different transport mechanisms. In the
present paper we focus on the \emph{passive} protein transport across membranes
of e.g.\ endoplasmic reticulum (ER) or mitochondria for which ratcheting models
were introduced by \citet{SimonPeskinOster1992} and
\citet{PeskinOdellOster:1993}. The main idea in these models is that due to
thermal fluctuations the protein moves, say inside and outside the ER for
definiteness, through a nanopore in the membrane according to a diffusion; see
the left part of Figure~\ref{fig:protein}. Inside the ER, ratcheting molecules
can bind to the protein at a certain rate. These ratcheting molecules are too
big (in our model they are actually infinitesimally small but one can imagine
that binding of the molecules leads to a deformation of the protein at
the ratcheting sites) to pass through the nanopore and prevent the protein from
diffusing outside the ER, i.e.\ the protein performs a reflected diffusion with
jumping reflection boundary which is due to binding of new ratcheting molecules.
In the last two decades such models have been studied extensively in biology,
physics as well as in mathematics. For a detailed overview of the recent
literature and for more biological motivation we refer to
\citep{DepperschmidtKettererPfaffelhuber:2012} and references therein.

With this motivation in mind $X_t$ (and $\wh X_t$) can be interpreted as the
length of the protein inside ER at time $t$ and $R_t$ (and $\wh R_t$) as the
distance between the ``head'' of the protein and the ratcheting molecule closest
to the nanopore; see the right part of Figure~\ref{fig:protein}. Since typically
proteins have to be unfolded during translocation into ER, the movement inside
takes place against a force pointing outside which explains the locally negative
drift of the ratchets. 

\subsection{Results}
\label{sec:results}

For both, the $(\gamma,\mu)$-Brownian ratchet and the
$(\gamma,\mu)$-Ornstein-Uhlenbeck ratchet we prove a law of large numbers as
well as a central limit theorem. Furthermore we compute the speed of the
ratchets in terms of the Airy $Ai$-function in the case of Brownian ratchet and
in terms of the Tricomi confluent hypergeometric function in the case of
Ornstein-Uhlenbeck ratchet.

\begin{theorem}[LLN and CLT for the Brownian ratchet]  
  \label{theoremRBM}
  Let $(\XX,\RR) = (X_t,R_t)_{t\geq 0}$ be the $(\gamma,\mu)$-Brownian ratchet
  starting in $(x_0,0)$ with $x_0 \geq 0$. If
  $\gamma,\mu \ge 0$ then  
  \begin{align}
    \label{eq:cgamma}
    \frac{X_t}{t} \xrightarrow{t\to\infty} v(\mu,\gamma) \coloneqq  -\frac{\gamma^{1/3}}{2^{2/3}}
    \frac{Ai'((2\gamma)^{-2/3}\mu^2)}{Ai((2\gamma)^{-2/3}\mu^2)} - \frac12 \mu
    \quad \text{almost surely,} 
  \end{align}
  where $Ai(\cdot)$ is the Airy function.  
  Furthermore in the case $\gamma>0$ there is $\sigma=\sigma(\mu,\gamma)>0$ such
  that   
  \[\frac{X_t-t v(\mu,\gamma)}{\sigma \sqrt{t}} \xRightarrow{t\to\infty} 
  X.\] Here `` $\xRightarrow{}$'' denotes convergence in distribution and $X$ is a
  standard Gaussian random variable.
\end{theorem}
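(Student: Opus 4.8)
The plan is to reduce everything to the \emph{gap process} $Y_t \coloneqq X_t - R_t$, which is itself a time-homogeneous Markov process: between jumps of $\RR$ the reflection boundary is frozen, so $Y$ evolves as a reflecting Brownian motion with drift $-\mu$ on $[0,\infty)$, and at rate $\gamma Y_{t-}$ it jumps to a point chosen uniformly in $[0,Y_{t-}]$. Its generator is
\[
  \mathcal A f(y) = \tfrac12 f''(y) - \mu f'(y) + \gamma\Bigl(\int_0^y f(z)\,dz - y f(y)\Bigr), \qquad f'(0)=0,
\]
and $\RR$ is the non-decreasing functional driven by $Y$ whose jump, when it occurs at gap value $y$, is uniform on $[0,y]$; thus its compensator is $A_t=\tfrac{\gamma}{2}\int_0^t Y_s^2\,ds$ and $M^R\coloneqq\RR-A$ is a martingale. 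The case $\gamma=0$ is trivial (then $\RR\equiv0$, $\XX$ is reflecting BM with drift $-\mu$, and $X_t/t\to0$, consistent with the $\gamma\downarrow0$ limit of \eqref{eq:cgamma}), so I assume $\gamma>0$ henceforth. First I would show that $(Y_t)$ is positive Harris recurrent with a unique invariant law $\pi$, using the Foster--Lyapunov function $V(y)=y^2$, for which $\mathcal A V(y)=1-2\mu y-\tfrac{2\gamma}{3}y^3\to-\infty$, irreducibility and aperiodicity being clear; with $V(y)=e^{cy}$ this upgrades to geometric ergodicity, so $\pi$ has finite moments of all orders. The pathwise coupling $Y_t\le Z_t$, where $Z$ is the reflecting BM with drift $-\mu$ driven by the same Brownian motion from $x_0$, gives $Y_t/t\to0$ a.s.

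Next I would identify $\pi$. Writing $\bar F(y)=\pi\bigl((y,\infty)\bigr)$, the stationary (adjoint) equation for $\pi$ reduces, after one integration and using decay at $\infty$, to the second-order linear ODE $\tfrac12\bar F''+\mu\bar F'-\gamma y\bar F=0$ on $(0,\infty)$ with $\bar F(0^{+})=1$ and $\bar F(+\infty)=0$ (the boundary condition at $0$ coming from normal reflection turning out to be automatically satisfied). Substituting $\bar F=e^{-\mu y}h$ gives $h''=(2\gamma y+\mu^2)h$, and the affine change $z=(2\gamma)^{1/3}y+(2\gamma)^{-2/3}\mu^2$ turns this into Airy's equation $h''=zh$; boundedness forces $h\propto Ai$, so
\[
  \bar F(y)=\frac{Ai\bigl((2\gamma)^{1/3}y+(2\gamma)^{-2/3}\mu^2\bigr)}{Ai\bigl((2\gamma)^{-2/3}\mu^2\bigr)}.
\]
For the law of large numbers, the ergodic theorem applied to $A_t/t$ and the martingale strong law applied to $M^R$ give $R_t/t\to\tfrac{\gamma}{2}\E_\pi[Y^2]$ a.s., whence $X_t/t\to\tfrac{\gamma}{2}\E_\pi[Y^2]$ a.s.\ by the coupling bound. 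Finally one checks this limit equals $v(\mu,\gamma)$: indeed $\E_\pi[Y^2]=2\int_0^\infty y\bar F(y)\,dy$, and integrating the ODE over $(0,\infty)$ gives $\gamma\int_0^\infty y\bar F\,dy=-\tfrac12\bar F'(0)-\mu$, so inserting the explicit $\bar F$ reproduces exactly the right-hand side of \eqref{eq:cgamma}.

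For the central limit theorem I would solve the Poisson equation $\mathcal A g=y^2-\E_\pi[Y^2]$ for a function $g$ with $g'(0)=0$ and polynomial growth (this is where geometric ergodicity is needed). By Dynkin's formula $\int_0^t(Y_s^2-\E_\pi[Y^2])\,ds=g(Y_0)-g(Y_t)+M^g_t$ with $M^g$ a martingale, so, using $v=\tfrac{\gamma}{2}\E_\pi[Y^2]$,
\[
  X_t-v\,t=\tfrac{\gamma}{2}M^g_t+M^R_t+\Bigl(\tfrac{\gamma}{2}\bigl(g(Y_0)-g(Y_t)\bigr)+Y_t\Bigr),
\]
and the bracketed remainder is $o_{\Pr}(\sqrt t)$ since $Y_t\Rightarrow\pi$ and $g$ grows polynomially. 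It then remains to apply the martingale central limit theorem to $M_t\coloneqq\tfrac{\gamma}{2}M^g_t+M^R_t$: the ergodic theorem gives $\langle M\rangle_t/t\to\sigma^2$ a.s.\ for an explicit $\sigma^2$ (assembled from $\E_\pi[g'(Y)^2]$, $\E_\pi[Y^3]$ and a cross term), the Lindeberg condition follows from the polynomial moment bounds, and $\sigma^2>0$ because the continuous martingale part $t\mapsto\tfrac{\gamma}{2}\int_0^t g'(Y_s)\,dB_s$ alone contributes $\tfrac{\gamma^2}{4}\int_0^t g'(Y_s)^2\,ds$ to $\langle M\rangle_t$ while $g$ is non-constant. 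This gives $M_t/(\sigma\sqrt t)\Rightarrow X$ and hence $(X_t-v\,t)/(\sigma\sqrt t)\Rightarrow X$.

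I expect the main obstacles to be the quantitative ergodicity of the non-standard one-dimensional process $Y$ (a reflecting diffusion with state-dependent jump rate and downward jumps) and, for the central limit theorem, the solvability of the Poisson equation with a polynomial growth bound on $g$ together with the strict positivity of $\sigma^2$; by contrast the Airy identification in the law-of-large-numbers step amounts to the elementary ODE manipulation sketched above.
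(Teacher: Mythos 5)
Your route is genuinely different from the paper's and, modulo one slip and the technical debt you yourself flag, it is sound. The paper never works with the continuous-time gap process directly: it uses a graphical construction, studies the embedded Markov chain of increments at jump times via the Green function of the killed reflecting Brownian motion (Airy functions enter through solutions of $\tfrac12 u''-\mu u'-\gamma x u=0$), proves uniqueness of that chain's invariant law by a coupling, computes $\E_\nu[Y_1]$ and $\E_\nu[\eta_1]$, and then derives both the LLN and the CLT from a regeneration/cumulative-process (renewal--reward) structure, finally invoking the scaling property to pass from $\gamma=\tfrac12$ to general $\gamma$. You instead treat the gap $Y_t=X_t-R_t$ as an ergodic Markov process, identify its time-stationary law from the stationary equation (Airy enters through $h''=(2\gamma y+\mu^2)h$), and get the LLN from the ergodic theorem plus the martingale SLLN for the compensated boundary process, and the CLT from a Poisson equation plus the martingale CLT. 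What this buys: no regeneration times, no Green-function calculus, no scaling reduction, and the speed appears transparently as $\tfrac{\gamma}{2}\E_\pi[Y^2]$. What it costs: you must establish positive Harris recurrence/geometric ergodicity (irreducibility and petite sets are plausible via the diffusive part but need to be written out), justify the regularity of $\pi$ used in deriving the ODE (cleaner: verify stationarity of your explicit density directly by the integration by parts you sketch, then invoke uniqueness), and --- the largest piece, which the paper's renewal--reward argument avoids entirely --- solve the Poisson equation with growth bounds on both $g$ and $g'$ strong enough to control the continuous and jump brackets of $M^g$. These steps are standard given your Lyapunov bounds, but they are real work, comparable to the paper's moment estimates for the regeneration increments.

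One concrete error: your displayed stationary tail omits the exponential factor produced by your own substitution $\bar F=e^{-\mu y}h$; it should read $\bar F(y)=e^{-\mu y}\,Ai\bigl((2\gamma)^{1/3}y+a\bigr)/Ai(a)$ with $a=(2\gamma)^{-2/3}\mu^2$. As displayed, your final check fails: one would get $v=-\tfrac{(2\gamma)^{1/3}}{2}\,Ai'(a)/Ai(a)-\mu$, off by $\mu/2$ from \eqref{eq:cgamma}. With the factor restored, $\bar F'(0)=-\mu+(2\gamma)^{1/3}Ai'(a)/Ai(a)$, and your identity $v=\gamma\int_0^\infty y\bar F(y)\,dy=-\tfrac12\bar F'(0)-\mu\bar F(0)$ gives exactly $-\tfrac{\gamma^{1/3}}{2^{2/3}}\,Ai'(a)/Ai(a)-\tfrac{\mu}{2}$; moreover $-\bar F'\ge 0$ because $Ai>0$ and $Ai'<0$ on $[0,\infty)$, so the corrected formula does define a probability density, and your observation that the reflecting boundary condition at $0$ is automatically satisfied by the ODE is correct.
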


Note that though the result is formulated for $\mu \ge 0$ for the proof we only
need to consider the case $\mu>0$. In the case $\mu=0$ the
$(\gamma,\mu)$-Brownian ratchet as well as the$(\gamma,\mu)$-Ornstein-Uhlenbeck
ratchet reduce to the process studied in \citep{DepperschmidtPfaffelhuber:2010}.

\begin{theorem}[LLN and CLT for the Ornstein-Uhlenbeck ratchet]  
  \label{theoremOU}
  Assume $\mu >0$ and $\gamma \ge 0$. Let $(\wh\XX,\wh\RR) = (\wh X_t, \wh
  R_t)_{t\geq 0}$ be the $(\gamma,\mu)$-Ornstein-Uhlenbeck ratchet starting in
  $(x_0,0)$ for $x_0\geq  0$. For $x \ge 0$ set
  \begin{align}
    \label{eq:57}
    h_{\mu,\gamma}(x) \coloneqq e^{-\gamma x/\mu -\mu x^2} U
  \left(\frac12-\frac{\gamma^2}{4\mu^3},\frac12,
    \Bigl(\frac\gamma{\mu^{3/2}}+\sqrt\mu x\Bigr)^2 \right),    
  \end{align}
  where $U$ is the Tricomi confluent hypergeometric function (see \eqref{eq:U}
  for a definition).  
  Then 
  \begin{align}\label{eq:cgamma-ou}
    \frac{\wh X_t}{t} \xrightarrow{t\to\infty} \wh v(\mu,\gamma) \coloneqq  -
    \frac{h'_{\mu,\gamma}(0)}{2h_{\mu,\gamma}(0)} - \frac{\mu \int_0^\infty
      h_{\mu,\gamma}(x)\, dx }{h_{\mu,\gamma}(0)}     \quad \text{almost surely.}    
  \end{align}
  Furthermore in the case $\gamma>0$ there is
  $\wh\sigma=\wh\sigma(\mu,\gamma)>0$ such that  
  \begin{align*}
    \frac{\wh X_t - t \wh v(\mu,\gamma)}{\wh\sigma \sqrt{t}}  
  \xRightarrow{t\to\infty} X 
  \end{align*} 
  for a standard Gaussian random variable $X$.  
\end{theorem}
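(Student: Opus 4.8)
The plan is to reduce Theorem~\ref{theoremOU} to an analysis of the autonomous \emph{gap process} $\wh Y_t \coloneqq \wh X_t - \wh R_t \ge 0$, in exact parallel with the treatment of the $(\gamma,\mu)$-Brownian ratchet in Theorem~\ref{theoremRBM}. Between the jumps of $\wh\RR$ the boundary is frozen, so $\wh X$ and $\wh Y$ move together; since the Ornstein--Uhlenbeck part is centred at the current reflection level, the drift of $\wh Y$ is $-\mu\,\wh Y$ and all dependence on $\wh R$ cancels. At a jump $\wh R$ moves up by $\delta$ and $\wh Y$ down by $\delta$ while $\wh X$ is unchanged. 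Hence $\wh Y$ is a well-defined, non-explosive càdlàg Markov process on $[0,\infty)$ with generator
\[
  \mathcal A f(y) = \tfrac12 f''(y) - \mu y\, f'(y) + \gamma y \int_0^1 \bigl(f(uy)-f(y)\bigr)\,du, \qquad y>0,
\]
and the reflecting (Neumann) condition $f'(0)=0$: between jumps it is a reflecting Ornstein--Uhlenbeck process (Section~\ref{sec:refl-OU}), and at rate $\gamma\,\wh Y_t$ it jumps from $\wh Y_{t-}$ to a uniform point of $[0,\wh Y_{t-}]$. I would first show that $\wh Y$ is positive Harris recurrent with a unique invariant law $\wh\pi$, geometrically ergodic, with all polynomial moments finite; the Lyapunov functions $y\mapsto y^{2}$ and $y\mapsto e^{cy}$ do the job because the restoring drift $-\mu y$ (here one uses $\mu>0$) gives strong mean reversion while the jumps only decrease $\wh Y$. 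Then I would identify $\wh\pi$ through its tail $\wh h(x)\coloneqq \wh\pi\bigl((x,\infty)\bigr)$: integrating the stationary forward equation for $\wh\pi$ once over $(x,\infty)$ turns the jump integral into $\gamma x\,\wh h(x)$ and yields the linear ODE $\tfrac12 \wh h'' + \mu x\, \wh h' = \gamma x\, \wh h$ on $(0,\infty)$ with $\wh h(0)=1$, $\wh h$ nonincreasing and $\wh h(x)\to0$. The change of variables built into \eqref{eq:57} (removing the factor $e^{-\gamma x/\mu-\mu x^{2}}$ and setting $z=(\gamma\mu^{-3/2}+\sqrt\mu\,x)^2$) brings this to Kummer's equation with parameters $\bigl(\tfrac12-\tfrac{\gamma^{2}}{4\mu^{3}},\tfrac12\bigr)$, and the decay condition at $+\infty$ forces the Tricomi branch $U$; thus $\wh h = h_{\mu,\gamma}/h_{\mu,\gamma}(0)$, so the stationary density at the boundary is $\wh\pi'(0+)=-h'_{\mu,\gamma}(0)/h_{\mu,\gamma}(0)$ and $\E_{\wh\pi}[\wh Y]=\int_0^\infty h_{\mu,\gamma}/h_{\mu,\gamma}(0)$.

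For the law of large numbers I would use the semimartingale decomposition
\[
  \wh X_t = x_0 - \mu\int_0^t \wh Y_s\,ds + B_t + \wh L_t,
\]
where $(B_t)$ is the driving Brownian motion and $\wh L$ the continuous nondecreasing boundary local time of $\wh Y$ at $0$ (the jumps of $\wh\RR$ and of $\wh Y$ cancel, so $\wh X$ stays continuous). By the ergodic theorem for $\wh Y$, $t^{-1}\!\int_0^t \wh Y_s\,ds\to\E_{\wh\pi}[\wh Y]$ a.s.; by the Revuz-measure (occupation-time) identification of the boundary local time of a unit-variance reflecting diffusion, $t^{-1}\wh L_t\to\tfrac12\,\wh\pi'(0+)$ a.s.; and $B_t/t\to0$. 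Hence $\wh X_t/t\to\tfrac12\wh\pi'(0+)-\mu\E_{\wh\pi}[\wh Y]$, which is exactly $\wh v(\mu,\gamma)$ of \eqref{eq:cgamma-ou}. Applying $\mathcal A$ to $f=\mathrm{id}$ and integrating against $\wh\pi$ (with the boundary term $\tfrac12\wh\pi'(0+)f'(0)$) also gives $\wh v(\mu,\gamma)=\tfrac\gamma2\,\E_{\wh\pi}[\wh Y^{2}]$, which exhibits the strict positivity of the speed for $\gamma>0$ (and consistently $\wh v(\mu,0)=0$, the reflecting Ornstein--Uhlenbeck case).

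For the central limit theorem, assume $\gamma>0$. Since $\wh Y$ is positive Harris recurrent it admits i.i.d.\ regeneration cycles $0\le T_0<T_1<\cdots$ (via a Nummelin splitting on a small set, or by exploiting the uniform law of $\wh Y$ right after a jump), with $T_k-T_{k-1}$ having exponential moments. Along these times $\bigl(\wh X_{T_k}-\wh X_{T_{k-1}},\,T_k-T_{k-1}\bigr)_{k\ge1}$ are i.i.d.\ with finite variances (using the moment bounds from $\wh\pi$ together with standard estimates on the Brownian and local-time increments accrued over one cycle), so $n\mapsto\wh X_{T_n}$ is a random walk with mean increment $\wh v(\mu,\gamma)\,\E[T_1-T_0]$. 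The classical CLT for random walks combined with the renewal theorem — an Anscombe-type argument passing from the cycle index back to continuous time $t$ and controlling the last incomplete cycle — yields $(\wh X_t-\wh v(\mu,\gamma)t)/(\wh\sigma\sqrt t)\Rightarrow X$ with $\wh\sigma^{2}=\operatorname{Var}\bigl(\wh X_{T_1}-\wh X_{T_0}-\wh v(\mu,\gamma)(T_1-T_0)\bigr)/\E[T_1-T_0]$, and $\wh\sigma>0$ because this cycle increment is not a.s.\ constant (it contains the nondegenerate term $B_{T_1}-B_{T_0}$).

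The main obstacle is the identification of $\wh\pi$ in the first step: converting the stationary integro-differential equation into the clean ODE $\tfrac12\wh h''+\mu x\wh h'=\gamma x\wh h$, and then proving that $h_{\mu,\gamma}$ is precisely the solution (unique up to scaling) with the prescribed behaviour at $0$ and the required decay at $+\infty$ — i.e.\ pinning down the Tricomi $U$-branch via its asymptotics $U(a,b,z)\sim z^{-a}$ — while at the same time rigorously justifying $\wh L_t/t\to\tfrac12\wh\pi'(0+)$. The regeneration construction needed for the CLT is routine but also requires some care since $\wh Y$ mixes a diffusive part with jumps. Everything else runs exactly as in the proof of Theorem~\ref{theoremRBM}, with the reflecting Brownian motion with constant drift there replaced by the reflecting Ornstein--Uhlenbeck process and the Airy equation replaced by Kummer's equation.
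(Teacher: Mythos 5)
Your proposal is sound in outline and arrives at the correct constants, but it takes a genuinely different route from the paper. The paper never works with the continuous-time gap process and its time-stationary law: it studies the discrete Markov chain $(\wh Y_n,\wh W_n,\wh\eta_n)$ of increments sampled at the jump times, identifies the invariant \emph{density} of that chain as $h_{\mu,\gamma}/\int_0^\infty h_{\mu,\gamma}(y)\,dy$ (Lemma~\ref{lem:our-density} and \eqref{eq:53}) via Green-function computations for the killed reflecting Ornstein--Uhlenbeck process, computes $\E_\nu[\wh Y_1]$ and $\E_\nu[\wh\eta_1]$ from that Green function, and gets the speed as the ratio $\E_\nu[\wh Y_1]/\E_\nu[\wh\eta_1]$ through the regeneration/cumulative-process structure and the ratio limit theorem; all moment bounds (second moment of the killing time, expected killing position, coupling time) come from explicit confluent-hypergeometric estimates rather than Lyapunov functions. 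Your identification of $h_{\mu,\gamma}$ as (proportional to) the \emph{tail} of the time-stationary law $\wh\pi$ of the gap process is consistent with the paper's: by Palm/size-biasing, the post-jump state is $U$ times a draw from $\wh\pi$ size-biased by the jump rate $\gamma y$, whose density at $z$ is $\wh\pi((z,\infty))/\E_{\wh\pi}[\wh Y]$, so ``tail of $\wh\pi$'' and ``density of $\nu$'' are the same object up to normalisation, and your speed $\tfrac12(\text{stationary density at }0)-\mu\E_{\wh\pi}[\wh Y]$ agrees with the paper's $\E_\nu[\wh Y_1]/\E_\nu[\wh\eta_1]$. What your route buys is a more transparent reading of the two terms in \eqref{eq:cgamma-ou} (local-time push minus mean restoring drift, equivalently $\tfrac{\gamma}{2}\E_{\wh\pi}[\wh Y^2]$, which makes positivity of the speed for $\gamma>0$ evident); what it costs is exactly the technical debt you flag: a rigorous stationary forward equation (and regularity of $\wh\pi$) for a reflected jump-diffusion, the almost sure limit $\wh L_t/t$ via an additive-functional or renewal-reward argument, and cycle moment bounds which the paper instead reads off the Green function. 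One small correction: the uniform post-jump law does not by itself yield regeneration, since it is uniform on $[0,\wh Y_{t-}]$ and so depends on the pre-jump state; the natural regeneration epochs are the returns of the gap process to $0$ (precisely the paper's $\rho_n$), after which no Nummelin splitting is needed, and nondegeneracy of $\wh\sigma$ still deserves an explicit argument rather than the remark that a Brownian increment is present.
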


\begin{figure}[h]
  \begin{center}
    \includegraphics[width=0.75\textwidth]{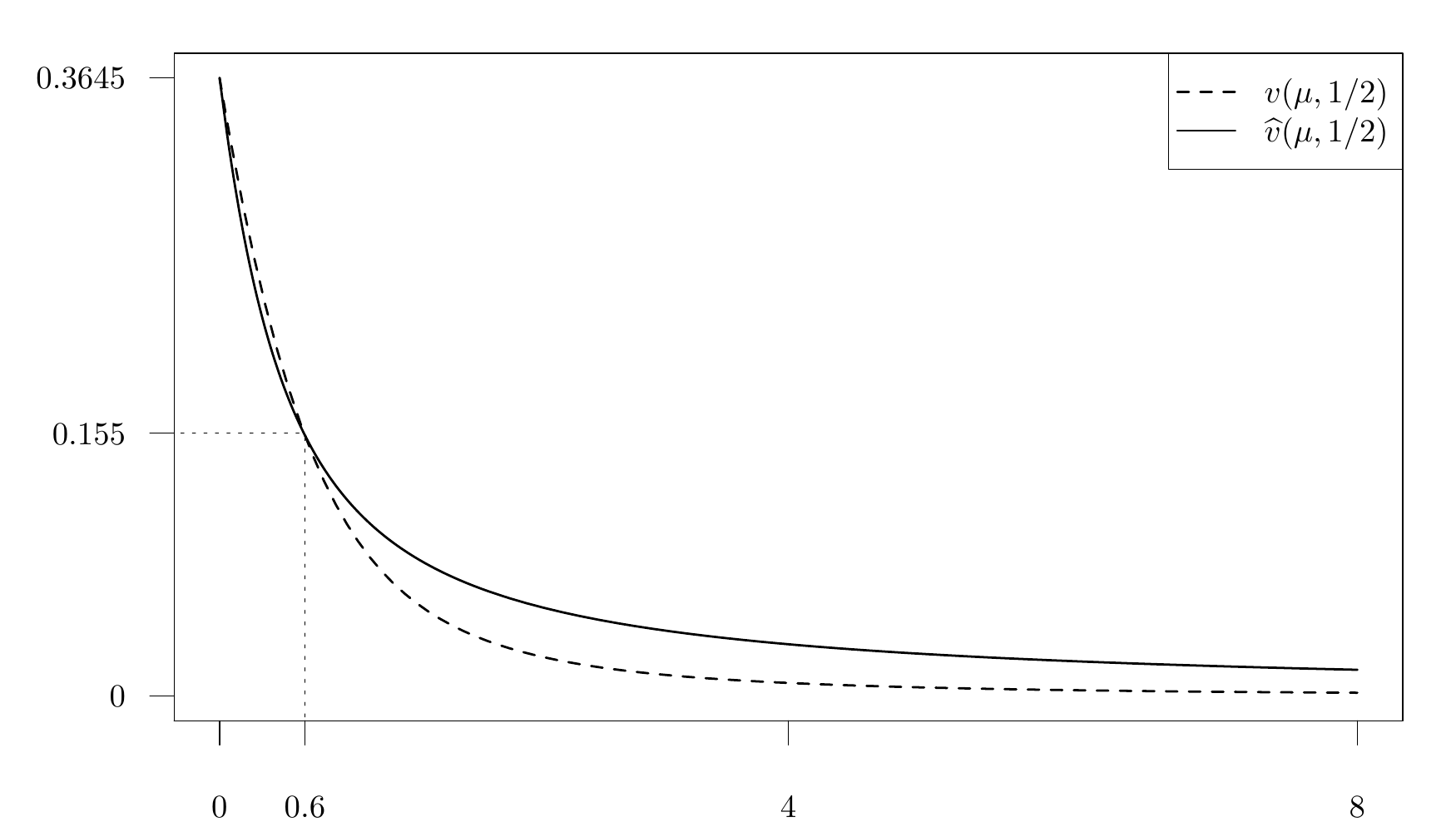}%
    \caption{\label{fig:speed-compar} Comparison of the speed of the ratchets for
      $\gamma=\tfrac12$ and $\mu \in [0,8]$. } 
  \end{center}
\end{figure}
\begin{remark}[Comparison of the ratchets]
  In Figure~\ref{fig:speed-compar} we plot the speed of both ratchets in the
  interval $\mu \in [0,8]$ in the case $\gamma=1/2$. The plots are based on
  numerical computations using \textsc{Mathematica}. In the neighbourhood of
  zero, here approximately in the interval $(0,0.6)$, the Brownian ratchet is
  faster whereas outside that interval the Ornstein-Uhlenbeck ratchet has a
  higher speed. Heuristically this can be explained: If $X_t - R_t \approx \wh 
  X_t- \wh R_t$ are large and $\mu$ small then the drift of $X_t$ towards $R_t$
  is smaller than that of $\wh X_t$ towards $\wh R_t$. Then in the Brownian case
  the reflection boundary jumps on average ``earlier'' and ``higher'' than in
  the Ornstein-Uhlenbeck case. Since both $X_t - R_t$ and $\wh X_t- \wh R_t$ are
  ``shortened'' at rate proportional to their values the described effect is not
  very pronounced and the speed of both ratchets is comparable in this region. 

  On the other hand if $\mu$ is large and $X_t - R_t \approx \wh X_t- \wh R_t$
  are close to zero then $\wh R_t$ has a higher chance to jump ``earlier'' and
  ``higher'' than $R_t$ because the drift $X_t$ towards $R_t$ is constant and
  that of $\wh X_t$ towards $\wh R_t$ is proportional to their distance which is
  small in this case. 

  Note that the above heuristic arguments are similar in spirit to the
  following considerations in the case without jumping reflection boundaries
  ($\gamma=0$, in that case the speed of both ratchets is zero).
  The invariant density of the reflected Brownian motion with negative drift
  $-\mu$ is $f(x) = 2\mu e^{-2\mu x}$, $x\ge 0$ \citep[see e.g.\ ][p.~94]{0659.60112} 
  and that of the reflected Ornstein-Uhlenbeck process with drift $-\mu x$ and
  unite variance is $g(x) = 2\sqrt{\mu/\pi} e^{-\mu x^2}$, $x \ge 0$ (this can
  be easily obtained from the invariant density of the Ornstein-Uhlenbeck
  process). For the expectations we have $\int_0^\infty x f(x)\, dx = 1/(2\mu)$
  and $\int_0^\infty x g(x)\, dx = 1/\sqrt{\pi \mu}$. In particular, under the
  invariant distributions the expectation of the reflected Brownian motion is
  larger than that of the reflected Ornstein-Uhlenbeck process for $\mu<\pi/4$
  whereas for $\mu>\pi/4$ the opposite inequality holds.\qed  
\end{remark}

\subsubsection*{Outline} 
\label{sec:outline}
The rest of the paper is split in two Sections in which Theorem~\ref{theoremRBM}
and Theorem~\ref{theoremOU} are proved. In Section~\ref{sec:gr-constr} we deal
with the Brownian ratchet. First, in Section~\ref{sec:rbmd} we recall an
explicit construction of the reflecting Brownian motion with drift. It will be
used in Section~\ref{sec:gr-constr-app} to give a graphical construction of the
Brownian ratchet. There we also prove a scaling property for the Brownian
ratchet and show that the graphical construction can also be used to construct a
coupling of Brownian ratchets with different initial conditions. Between the
jump times the Brownian ratchet can be seen as a killed reflecting Brownian
motion with drift. For that reason in Section~\ref{sec:green} we compute the
corresponding Green function and obtain several estimates on the moments of the
killing time and the position at killing time. In Section~\ref{sec:invdist} we
study the Markov chain of the increments of the Brownian ratchet at jump times.
We show that this Markov chain possesses a unique invariant distribution and
compute the expectations under this distribution. These will be used later to
compute the speed of the ratchet explicitly. In Section~\ref{sec:reg-brr} we
define a regeneration structure for the Brownian ratchet and show that the
increment at these regeneration times have finite second moments. From that we
obtain in Section~\ref{sec:proof12} the assertion of Theorem~\ref{theoremRBM}.
In Section~\ref{sec:OU-ratchet}, which has a similar structure to
Section~\ref{sec:gr-constr}, we carry out the corresponding program for the
Ornstein-Uhlenbeck ratchet.

\section{Brownian ratchet with negative local drift} 
\label{sec:gr-constr}

In this section we give a graphical construction of the Brownian ratchet with
negative local drift from which we deduce a scaling property and show that the
construction allows to couple two Brownian ratchets so that from some almost
surely finite time on they have the same spatial as well as temporal increments.
Then we study the Markov chain of the increments of the ratchet at the jump times
of the boundary and show that it has a unique invariant distribution, which will
allow to compute the speed of the ratchet explicitly. For the LLN and CLT we
define regeneration times of the ratchet and show that the increments between
these times have bounded second moments.

Before we start with the above schedule let us recall the definition and an
explicit construction of the reflecting Brownian motion with drift.

\subsection{Reflecting Brownian motion with drift}
\label{sec:rbmd}

Though the definition given here is valid for any $\mu \in \R$ we will assume
$\mu \ge 0$ because the case $\mu < 0$ is less interesting. For more information
on reflecting Brownian motion with drift we refer to e.g.\
\citep{0659.60112,0965.60077,Peskir:2006}.
A \emph{reflecting Brownian motion} with infinitesimal drift $-\mu$ started in
$x \ge 0$, which we denote by $RBM^x(-\mu)$, is a strong Markov process with
continuous paths (i.e.\ a diffusion process) associated with the infinitesimal
operator $A^\mu$ acting on
\begin{align*}
  \mathcal D(A^\mu)\coloneqq  \{f \in C_b^2(\R_+) : f'(0+) =0\}
\end{align*}
as follows: 
\begin{align}
  \label{eq:gener}
  A^\mu f(y) : =  \frac 12 f''(y) -\mu f'(y).   
\end{align}
We shall omit the superscript $x$ and write $RBM(-\mu)$ whenever the initial
value is not important. 

Let us also recall from \citep{Peskir:2006} an explicit construction of 
$RBM^x(-\mu)$ that will be useful for our purposes. Let $B=(B_t)_{t\ge 0}$ be
a standard Brownian motion starting in $0$. We define the Brownian motion  
with drift $\mu$, denoted by $B^\mu$, and its running maximum, denoted by
$M^\mu$, by 
\begin{align}
  \label{eq:bm.max}
  B^\mu_t = B_t+ \mu t \quad  \text{and} \quad M_t^\mu = \max_{0\le s \le t}
  B_s^\mu, \quad \text{for }  t \ge 0. 
\end{align}
Furthermore we define $Z^{\mu,x}=(Z_t^{\mu,x})_{t\ge 0}$ by   
\begin{align}
  \label{eq:RBM}
  Z_t^{\mu,x} = (x  \vee M^\mu_t) - B^\mu_t. 
\end{align}
Then in \citep[][Thm.~2.1]{Peskir:2006} it is shown that
\begin{align}
  \label{eq:laweq}
  RBM^x(-\mu) \stackrel{d}{=} Z^{\mu,x}.  
\end{align}

\subsection[Graphical construction of the Brownian ratchet]{Graphical
  construction of  the Brownian ratchet  with negative  local drift}  
\label{sec:gr-constr-app}

  Assume $\mu\ge 0$ and let $B^\mu$ be as in \eqref{eq:bm.max}. Furthermore let
  $N^\gamma$ be an independent Poisson process on $\R \times [0,\infty)$ with
  intensity $\gamma \lambda^2(dx,dt)$ where $\lambda^2$ is Lebesgue measure on
  $\R^2$.

  We define a sequence of jump times $(\tau_n)_{n=0,1,\dots}$ and a sequence
  $(S^{(n)})_{n=0,1,\dots}$ with $S^{(n)} = (S_t^{(n)})_{t \ge \tau_n}$ as
  follows: 
  \begin{align}
    \label{eq:29} 
    \tau_0 & \coloneqq 0, \\ 
    \label{eq:18} 
    S_0^{(0)}& \coloneqq x_0, \quad  S_t^{(0)}  \coloneqq \max\bigl\{S_0^{(0)},\sup_{0 \le s \le
      t} \{B_s^\mu\} \bigr\}. 
  \end{align}
  Given $\tau_{n-1}$ and $S^{(n-1)}$ for some  $n \ge 1$ we set  
  \begin{align} 
    \label{eq:19}  
    \tau_n &\coloneqq \inf\bigl\{ t >\tau_{n-1}: N^\gamma \cap [B_t^\mu,S_t^{(n-1)}]\times \{t\} 
    \ne  \emptyset\bigr\}.     
  \end{align}
  Furthermore we let $S_{\tau_n}^{(n)}$ be the space component of the almost surely unique 
  element of $ N^\gamma \cap [B_{\tau_{n}}^\mu,S_{\tau_n}^{(n-1)}]\times
  \{\tau_n\}$. For $t \ge \tau_n$ define   
  \begin{align*}
    S_t^{(n)} & \coloneqq \max\bigl\{S_{\tau_n}^{(n)}, \sup_{\tau_n \le s\le t}
    \{B_s^\mu\}\bigr\}.  
  \end{align*}
  
  Finally we define $S \coloneqq (S_t)_{t\ge 0}$  and $(\XX,\RR)\coloneqq (X_t,R_t)_{t \ge 0} $ by
  setting   
  \begin{align}
    \label{eq:def.SXR}
    \begin{split}
      S_t & =S_t^{(n)} \quad \text{and} \quad  R_t=\sum_{i=1}^n 
      \left(S_{\tau_i}^{(i-1)}-S_{\tau_i}^{(i)}\right) \; \; \text{for $t \in
        [\tau_n,\tau_{n+1})$} \\ 
      \text{and }  \quad  
      X_t & =R_t + S_t-B_t^\mu \quad  \text{for $t \ge 0$.}  
    \end{split}
  \end{align}
  Note that $S$ is the ``running maximum'' process that jumps down to Poisson points
  that are between the process itself and the Brownian motion with drift. 

\begin{figure}[htb]
  \begin{center}
    \includegraphics[width=0.98\textwidth]{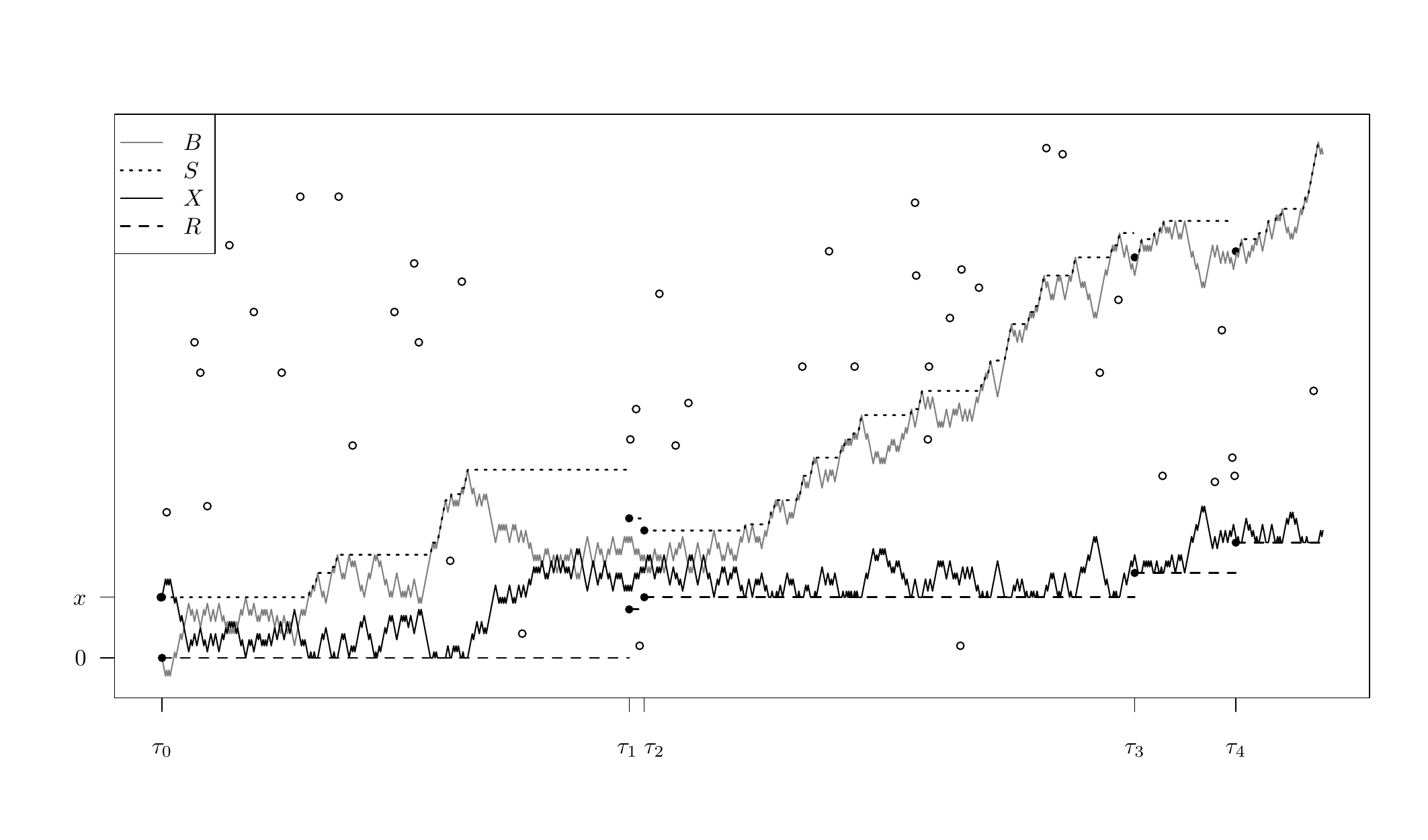}%
    \caption{\label{fig:br-ratchet} Graphical construction of the Brownian
      ratchet with locally negative drift.}
  \end{center}
\end{figure}

In the following lemma we verify that $(\mathcal X,\mathcal R)$ fits the
description of $(\gamma,\mu)$-Brownian ratchet given in
Subsection~\ref{sec:model}. 
\begin{lemma} \label{lem:gr-constr-BR}
  The process $(\mathcal X,\mathcal R)$ is $(\gamma,\mu)$-Brownian ratchet 
  started in $(x,0)$.  
\end{lemma}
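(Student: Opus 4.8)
The plan is to verify the three defining features of the $(\gamma,\mu)$-Brownian ratchet in turn: (a) between consecutive jump times $\tau_n$ and $\tau_{n+1}$, the pair $(X_t,R_t)$ evolves as an $RBM(-\mu)$ (with $R_t$ constant), reflected at the current boundary level $R_t$; (b) conditionally on the past up to time $t$, the reflection boundary jumps at rate $\gamma(X_t-R_t)$; and (c) at a jump time $\tau$, the new reflection boundary $R_\tau$ is uniform on $[R_{\tau-},X_\tau]$. Throughout, one works on the fixed interval $[\tau_n,\tau_{n+1})$ where $R_t\equiv R_{\tau_n}$ is constant, so $X_t=R_{\tau_n}+S_t^{(n)}-B_t^\mu$ and one uses the explicit construction of Section~\ref{sec:rbmd}.

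First I would check (a). On $[\tau_n,\tau_{n+1})$ we have, by \eqref{eq:def.SXR}, $X_t-R_{\tau_n}=S_t^{(n)}-B_t^\mu$ where $S_t^{(n)}=\max\{S^{(n)}_{\tau_n},\sup_{\tau_n\le s\le t}B_s^\mu\}$. Writing $\widetilde B_s\coloneqq B^\mu_{\tau_n+s}-B^\mu_{\tau_n}$, which by the strong Markov property of Brownian motion (and the fact that $\tau_n$ is a stopping time for the driving data) is again a Brownian motion with drift $\mu$ started at $0$, one sees that $S^{(n)}_{\tau_n+s}-B^\mu_{\tau_n+s}=(a\vee \widetilde M_s)-\widetilde B_s$ with $a\coloneqq S^{(n)}_{\tau_n}-B^\mu_{\tau_n}=X_{\tau_n}-R_{\tau_n}\ge 0$ and $\widetilde M_s=\max_{0\le u\le s}\widetilde B_u$. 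Comparing with \eqref{eq:RBM} this is exactly $Z^{\mu,a}_s$, so by \eqref{eq:laweq} the process $(X_{\tau_n+s}-R_{\tau_n})_{s\ge 0}$, run until the next jump, is an $RBM^{a}(-\mu)$; equivalently $X$ is an $RBM(-\mu)$ reflected at the level $R_{\tau_n}$. This gives the correct diffusive dynamics between jumps, and in particular $X_t\ge R_t$ for all $t$.

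Next I would verify (b) and (c) using standard properties of the Poisson process $N^\gamma$, which is independent of $B^\mu$. Given everything up to time $t$ with no jump yet since $\tau_n$, the definition \eqref{eq:19} says the next jump occurs at the first time the ``moving window'' $[B^\mu_t,S^{(n)}_t]\times\{t\}$ meets a Poisson point. Since $N^\gamma$ has intensity $\gamma\,dx\,dt$ and the window at time $t$ has Lebesgue length $S^{(n)}_t-B^\mu_t=X_t-R_t$, the standard thinning/exponential-clock argument for Poisson processes shows that, conditionally on the trajectory of $(X,R)$, the jump occurs at instantaneous rate $\gamma(X_t-R_t)$; making this rigorous is just the statement that $\exp(-\gamma\int_{\tau_n}^{t}(X_s-R_s)\,ds)$ is the conditional survival probability, which follows from the independence of $N^\gamma$ and $B^\mu$ together with Fubini. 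At the jump time $\tau=\tau_{n+1}$, conditionally on $\tau$ and on the window $[B^\mu_\tau,S^{(n)}_\tau]$, the location of the (a.s.\ unique) Poisson point in that window is uniform on $[B^\mu_\tau,S^{(n)}_\tau]$ by the elementary conditional-uniformity property of a planar Poisson process restricted to a set of finite measure. Translating by $R_\tau=R_{\tau_n}$ and using $X_\tau=R_{\tau_n}+S^{(n)}_\tau-B^\mu_\tau$ and the update $R_{\tau}=R_{\tau-}+(S^{(n)}_{\tau}-S^{(n+1)}_{\tau})$ from \eqref{eq:def.SXR}, this says precisely that $R_\tau$ is uniform on $[R_{\tau-},X_\tau]$, and that $R$ is non-decreasing. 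Finally one should note $\tau_n\to\infty$ a.s.\ (the window has finite length at each finite time, so only finitely many jumps occur on compacts), so the construction is well-defined for all $t\ge 0$.

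The only genuinely delicate point is the rate computation in (b): one must be careful that the window length process $(X_s-R_s)_{s\in[\tau_n,\tau_{n+1})}$ is measurable with respect to the Brownian data alone (it is, by step (a)) so that conditioning on it and then using independence of $N^\gamma$ is legitimate, and that the first-hitting time of the random window by $N^\gamma$ has the claimed conditional law — this is where a short argument with the Poisson process's independence over disjoint space-time regions, or equivalently an explicit computation of $\Pr(\tau_{n+1}>t\mid \mathcal F^{B})$, is needed. Everything else is bookkeeping built on \eqref{eq:laweq} and the defining properties of $N^\gamma$. I would therefore structure the proof as: (1) reduce to one inter-jump interval via the strong Markov property; (2) identify the diffusion law there with $RBM(-\mu)$ reflected at $R_{\tau_n}$ using \eqref{eq:RBM}--\eqref{eq:laweq}; (3) compute the conditional jump rate and the conditional jump distribution from the Poisson process; (4) conclude by induction over $n$ and note $\tau_n\to\infty$.
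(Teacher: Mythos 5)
Your proposal is correct and follows essentially the same route as the paper: identify the inter-jump dynamics with $RBM(-\mu)$ reflected at the current level via \eqref{eq:RBM}--\eqref{eq:laweq}, read off the jump rate $\gamma(S_t-B^\mu_t)=\gamma(X_t-R_t)$ from the homogeneous Poisson process, and get the uniform law of the new boundary from the conditional uniformity of the Poisson point in the window $[B^\mu_{\tau},S^{(n)}_{\tau}]$. The only item the paper spells out that you leave implicit is the continuity of $\mathcal X$ at jump times ($X_{\tau_n}=X_{\tau_n-}$), which is exactly the telescoping identity hidden in your use of $X_\tau=R_{\tau_n}+S^{(n)}_\tau-B^\mu_\tau$ at the jump time, so nothing is missing.
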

\begin{proof}
  By construction $S_{\tau_i}^{(i-1)} \ge S_{\tau_i}^{(i)}$, so that $\mathcal
  R$ is non-decreasing. Furthermore, $S_t \ge B_t$ implies $ X_t \ge  R_t$ for
  all $t \ge 0$. Between $\tau_n$ and $\tau_{n+1}$  the process  
  $\mathcal X$ is $RBM(-\mu)$ reflected at $R_t$ starting at time $\tau_n$ in  
  \begin{align*}
    R_{ \tau_n} + S_{\tau_n}-B^\mu_{\tau_n} &= \sum_{i=1}^n (S_{
      \tau_i}^{(i-1)} - S_{ \tau_i}^{(i)}) + S_{\tau_n}^{(n)} -
    B^\mu_{\tau_n} \\ &   
    = \sum_{i=1}^{n-1} (S_{ \tau_i}^{(i-1)} - S_{\tau_i}^{(i)}) + S_{
      \tau_n}^{(n-1)} - B^\mu_{\tau_n} = R_{\tau_{n-1}} +
    S_{\tau_n-}-B^\mu_{\tau_n}.   
  \end{align*}
  Thus, $X_{\tau_n} =  X_{\tau_n-}$ for all $n \ge 0$, and therefore
  the paths of $\mathcal X$ are continuous.    

  Given the process up to time $\tau_n$ the jump rate of $\mathcal R$ i.e.\ the
  rate of at which $\tau_{n+1}$ occurs is $\gamma(S_t-B_t^\mu)=\gamma( X_t-  R_t)$.
  Then the reflection boundary jumps to $R_{\tau_{n+1}}=
  R_{\tau_n}+S_{ \tau_{n+1}}^{(n)} - S_{ \tau_{n+1}}^{(n+1)}$. By homogeneity of the
  Poisson process $N^\gamma$, $S_{\tau_{n+1}}^{(n+1)}$ is uniform on
  $[B^\mu_{\tau_{n+1}},S_{\tau_{n+1}}^{(n)}]$. Thus, $R_{\tau_{n+1}}$ is uniform on 
  $[R_{\tau_n}, X_{\tau_{n+1}}]$ because for some $U \sim U([0,1])$ we have 
  \begin{align*}
    R_{\tau_{n+1}} & = R_{\tau_n}+S_{\tau_{n+1}}^{(n)} - (B_{\tau_{n+1}}^\mu + U
    (S_{\tau_{n+1}}^{(n)}-B_{\tau_{n+1}}^\mu))\\
    & = R_{\tau_n}+ (1-U)(S_{ \tau_{n+1}}^{(n)} - B_{\tau_{n+1}}^\mu) =
    R_{\tau_n}+ (1-U)( X_{ \tau_{n+1}}- R_{\tau_n}).
  \end{align*}
\end{proof}

If we transform time and space in the graphical construction then we of course  
rescale the Brownian motion with drift and transform the Poisson process. There
is only one such transformation that maps the Poisson process $N^1$ to
$N^\gamma$ and the Brownian motion with drift to a Brownian motion with another
drift. 
\begin{lemma}[Scaling property] 
  \label{lem:scaling}
  For $\gamma>0$ and $\mu \ge 0$ we have  
  \begin{align}
    \label{eq:br.scaling}
    (X_t^{\gamma,\mu},R_t^{\gamma,\mu})_{t\ge 0} \stackrel{d}{=}\gamma^{-1/3}
    (X_{\gamma^{2/3}t}^{1,\gamma^{-1/3}\mu},R_{\gamma^{2/3}t}^{1,\gamma^{-1/3}\mu})_{t\ge 
      0}. 
  \end{align}
\end{lemma}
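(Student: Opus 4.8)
The plan is to read off \eqref{eq:br.scaling} directly from the graphical construction of Section~\ref{sec:gr-constr-app}, combining Brownian scaling with the scaling behaviour of a Poisson intensity. Start from the graphical data of the $(1,\gamma^{-1/3}\mu)$-Brownian ratchet, namely a Brownian motion $\bar B = B^{\gamma^{-1/3}\mu}$ with drift parameter $\gamma^{-1/3}\mu$ (in the sense of \eqref{eq:bm.max}) together with an independent Poisson process $\bar N = N^1$ on $\R \times [0,\infty)$ with intensity $\lambda^2$, and let $(\bar X_s,\bar R_s)_{s\ge 0}$ be the ratchet built from $(\bar B,\bar N)$ via \eqref{eq:29}--\eqref{eq:def.SXR}. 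Introduce the rescaled data
\[
  \wt B_t := \gamma^{-1/3}\bar B_{\gamma^{2/3}t}, \qquad t \ge 0,
\]
and let $\wt N$ be the image of $\bar N$ under the map $(x,s)\mapsto(\gamma^{-1/3}x,\gamma^{-2/3}s)$.

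The first step is to verify that $(\wt B,\wt N)\stackrel{d}{=}(B^\mu,N^\gamma)$ jointly. Writing $\bar B_s = W_s+\gamma^{-1/3}\mu s$ with $W$ a standard Brownian motion, the scaling identity $(W_{\gamma^{2/3}t})_{t\ge 0}\stackrel{d}{=}(\gamma^{1/3}W_t)_{t\ge 0}$ shows that $\wt B_t = \gamma^{-1/3}W_{\gamma^{2/3}t} + \mu t$ is a Brownian motion with drift $\mu$. For the Poisson part, the substitution $(x,s)=(\gamma^{1/3}u,\gamma^{2/3}v)$ has Jacobian $\gamma^{1/3}\gamma^{2/3}=\gamma$, so the image of $\lambda^2$ under $(x,s)\mapsto(\gamma^{-1/3}x,\gamma^{-2/3}s)$ equals $\gamma\lambda^2$; hence $\wt N$ is a Poisson process with intensity $\gamma\lambda^2$, and it is independent of $\wt B$ because $\bar N$ is independent of $\bar B$.

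The second step is to run the construction \eqref{eq:29}--\eqref{eq:def.SXR} on the data $(\wt B,\wt N)$ and check, by induction on $n$, that it produces the $\gamma^{-1/3}$-rescaling of $(\bar X,\bar R)$ read at time $\gamma^{2/3}t$. Precisely, with initial values matched by $S^{(0)}_0 = \gamma^{-1/3}\bar S^{(0)}_0$, one shows $\tau_n = \gamma^{-2/3}\bar\tau_n$ and $S^{(n)}_t = \gamma^{-1/3}\bar S^{(n)}_{\gamma^{2/3}t}$ for all relevant $t$. The inductive step rests on two equivariance facts: running maxima transform correctly, $\sup_{\tau_n\le s\le t}\wt B_s = \gamma^{-1/3}\sup_{\bar\tau_n\le r\le\gamma^{2/3}t}\bar B_r$; and, once $S^{(n-1)}_t = \gamma^{-1/3}\bar S^{(n-1)}_{\gamma^{2/3}t}$ is known, a point of $\wt N$ lies in $[\wt B_t,S^{(n-1)}_t]\times\{t\}$ if and only if the corresponding point of $\bar N$ lies in $[\bar B_{\gamma^{2/3}t},\bar S^{(n-1)}_{\gamma^{2/3}t}]\times\{\gamma^{2/3}t\}$, by the very definition of $\wt N$, with the selected spatial coordinate again multiplied by $\gamma^{-1/3}$. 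Substituting into \eqref{eq:def.SXR}, the sum $R_t=\sum_{i=1}^n(S^{(i-1)}_{\tau_i}-S^{(i)}_{\tau_i})$ scales by $\gamma^{-1/3}$ term by term and $X_t = R_t+S_t-\wt B_t$ by $\gamma^{-1/3}$ as well, so the ratchet built from $(\wt B,\wt N)$ equals $\gamma^{-1/3}(\bar X_{\gamma^{2/3}t},\bar R_{\gamma^{2/3}t})_{t\ge 0}$ pathwise. Combining this pathwise identity with the distributional identity of the first step and with Lemma~\ref{lem:gr-constr-BR} yields \eqref{eq:br.scaling}. (Both ratchets are here started from matching initial conditions, $X^{1,\gamma^{-1/3}\mu}_0 = \gamma^{1/3}X^{\gamma,\mu}_0$; the precise initial value is irrelevant for the later applications.)

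Everything is routine; the only point requiring care is the bookkeeping in the induction — keeping straight which quantities are spatial (so rescaled by $\gamma^{-1/3}$: $\bar B$, $S$, $R$, $X$) and which are temporal (rescaled by $\gamma^{2/3}$: the jump times $\tau_n$) — together with the observation that the pair of exponents is forced. Indeed, demanding simultaneously that $\wt B$ again be a Brownian motion with a constant drift and that $\wt N$ again have intensity a constant multiple of $\lambda^2$ pins down the time scale $\gamma^{2/3}$, the space scale $\gamma^{-1/3}$, and the new drift $\gamma^{-1/3}\mu$, which is exactly the content of the lemma.
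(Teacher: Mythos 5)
Your proposal is correct and follows essentially the same route as the paper: the paper's proof also rescales the graphical construction by $g(t,x)=(\gamma^{-2/3}t,\gamma^{-1/3}x)$, reducing the claim to exactly your two checks that the image of $N^1$ is distributed as $N^\gamma$ and that the rescaled drifted Brownian motion is $B^\mu$, with the same matching of initial conditions. The only difference is that you spell out the inductive pathwise bookkeeping (jump times, running maxima, $S$, $R$, $X$) that the paper leaves implicit, which is a harmless elaboration rather than a different argument.
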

\begin{proof}
  Assume that we construct
  $(X_{t}^{1,\gamma^{-1/3}\mu},R_{t}^{1,\gamma^{-1/3}\mu})_{t\ge 0}$ starting
  in $(\gamma^{1/3}x_0,0)$ using the Poisson process $N^1$ and the Brownian motion
  with drift $B^{\gamma^{-1/3}\mu}$.  We define $g:\R^2 \to \R^2 $ by
  \begin{align*}
    g(t,x) = (\gamma^{-2/3}t,\gamma^{-1/3}x).  
  \end{align*}
  On the one hand, rescaling space and time using $g$ we obtain the process on
  the right hand side of \eqref{eq:br.scaling}. On the other hand the rescaled
  graphical construction leads to the process on the left hand side of
  \eqref{eq:br.scaling}. For that we need to verify
  \begin{align} 
    \label{eq:13} 
    g(N^1) & \stackrel d = N^\gamma,\\ 
    \label{eq:21} 
    g \bigl((t,B_{t}^{\mu\gamma^{-1/3}})_{t\geq 0}\bigr) & \stackrel d =
    \bigl(t,B_{t}^{\mu}\bigr)_{t\geq 0}.  
  \end{align}
  Equation \eqref{eq:13} is clear. For \eqref{eq:21} we have using the scaling
  property of the Brownian motion
  \begin{align}
    \label{eq:14}
    \begin{split}
      g \bigl((t,B_{t}^{\mu\gamma^{-1/3}})_{t\geq 0}\bigr) & = 
      \bigl(\gamma^{-2/3}t,\gamma^{-1/3}B_{t}^{\mu\gamma^{-1/3}}\bigr)_{t\geq
        0} = \bigl(\gamma^{-2/3}t, \mu \gamma^{-2/3} t +
      \gamma^{-1/3}B_{t}^{0})\bigr)_{t\geq 0}  \\
      & \stackrel{d}{=}\bigl(\gamma^{-2/3}t,\mu \gamma^{-2/3}t +
      B_{\gamma^{-2/3}t}^{0}\bigr)_{t\geq 0}  =
      \bigl(t,B_{t}^{\mu}\bigr)_{t\geq 0}.  
    \end{split}
  \end{align}
\end{proof}

We now turn to the construction of a coupling of two $(\gamma,\mu)$-Brownian
ratchets starting in $\bigl((x,0),(\wt x,0)\bigr)$. Let $(B_t^\mu)_{t\ge0}$
and $N^\gamma$ be as before, and let $x,\wt x \ge 0$ with $x \ge \wt x$
without loss of generality. To construct the coupled Brownian ratchet 
\begin{align}
  \label{eq:coupl}
  \bigl((X_t,R_t),(\widetilde X_t,\widetilde R_t)\bigr)_{t\ge   0}, 
\end{align}
set $\tau_0=\wt\tau_0=0$, $S_0^{(0)}=x$, $\widetilde S_0^{(0)}=\wt x$ and
define as before the sequences $(\tau_n)_{n\ge 0}$, $(\wt \tau_n)_{n \ge 0}$,
$(S^{(n)})_{n\ge 0}$ and $(\wt S^{(n)})_{n\ge 0}$. Furthermore define the
corresponding processes $S$, $\wt S$, $(X_t,R_t)_{t\ge 0}$ and $(\widetilde
X_t,\widetilde R_t)_{t\ge   0}$ as in \eqref{eq:def.SXR}. 
Define the coupling time by  
\begin{align} 
  \label{eq:def.Tcoupl} 
  T_{\textnormal{coupl}}=\inf\{t \ge 0: S_t = \widetilde S_t\}.   
\end{align}
Note that, since we use the same Brownian motion and the same Poisson 
process for both ratchets we have $S_t = \widetilde S_t$ for all  $t \ge
T_{\textnormal{coupl}}$. Thus, on the event $\{T_{\textnormal{coupl}} < \infty\}$,
there are $n, \wt n$ such that for $k \ge 0$ we have almost surely 
\begin{align} 
  \label{eq:coupl.cons}
  \begin{split}
    \tau_{n+k} & =\wt \tau_{\wt n +k}, \\
    X_{\tau_{n+k+1}}-X_{\tau_{n+k}} & =\widetilde X_{\tau_{n+k+1}}-\widetilde
    X_{\tau_{n+k}},   \\ 
    R_{\tau_{n+k+1}}-R_{\tau_{n+k}} & =\widetilde R_{\tau_{n+k+1}}-\widetilde
    R_{\tau_{n+k}}. 
  \end{split}
\end{align}
The following lemma shows that $T_{\textnormal{coupl}}$ is almost surely finite,
i.e., the coupling is successful.    
\begin{lemma}[Exponential moments of the coupling time]
  \label{lem:coupling} 
  \leavevmode%
  \\ 
  For $\mu >0$ and $0 \le \alpha \le \mu^2/2$ we have   
  \begin{align*}
    \mathbb E [e^{\alpha T_{\textnormal{coupl}}}] \le e^{x(\mu
      -\sqrt{\mu^2-2\alpha })}.    
  \end{align*}
\end{lemma}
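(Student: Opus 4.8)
The key observation is that the coupling time $T_{\textnormal{coupl}}$ is dominated by the first time the running-maximum processes $S$ and $\widetilde S$ agree, and since $x \ge \widetilde x$ we have $S_t \ge \widetilde S_t$ until they meet. In fact $S_t = \widetilde S_t$ as soon as the Brownian motion with drift $B^\mu$ exceeds the larger starting level $x$: once $B^\mu_t \ge x$ we have $S_t^{(0)} = \sup_{0\le s\le t} B^\mu_s$ on both sides (before either boundary has had a chance to jump, or after — one checks that in all cases both running maxima coincide with $\sup_{s\le t} B^\mu_s$ once $B^\mu$ has reached $x$). Hence
\begin{align*}
  T_{\textnormal{coupl}} \le \sigma_x \coloneqq \inf\{t \ge 0 : B^\mu_t \ge x\},
\end{align*}
and it suffices to bound the exponential moments of the first passage time $\sigma_x$ of a Brownian motion with positive drift $\mu$ to the level $x \ge 0$.

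The hitting time $\sigma_x$ of level $x$ by $B^\mu_t = B_t + \mu t$ has a well-known (inverse Gaussian) Laplace transform. The clean way to get it is the exponential martingale: for $\theta \in \R$, the process $\exp(\theta B^\mu_t - (\tfrac12\theta^2 + \mu\theta)t)$ is a martingale (since $B^\mu$ has generator $\tfrac12 \partial^2 + \mu\partial$). Given $\alpha$ with $0 \le \alpha \le \mu^2/2$, I choose $\theta \ge 0$ solving $\tfrac12\theta^2 + \mu\theta = -\alpha$; the relevant root is $\theta = -\mu + \sqrt{\mu^2 - 2\alpha} \le 0$. Wait — I need the sign right: to make $\exp(\theta B^\mu - (\cdot)t)$ into something I can evaluate at $\sigma_x$ and get $e^{\alpha \sigma_x}$, I want the time-coefficient to equal $-\alpha$, i.e. $\tfrac12\theta^2 + \mu\theta = -\alpha$, whose roots are $\theta = -\mu \pm \sqrt{\mu^2 - 2\alpha}$. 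The process $M_t = \exp(\theta B^\mu_t + \alpha t)$ is then a martingale, but to apply optional stopping at $\sigma_x$ I need it bounded on $[0,\sigma_x]$, which forces $\theta \le 0$: take $\theta = -(\mu - \sqrt{\mu^2-2\alpha}) \le 0$. Then on $\{t \le \sigma_x\}$ we have $B^\mu_t \le x$, so $M_{t\wedge\sigma_x} \le \exp(\theta x + \alpha(t\wedge\sigma_x))$ — hmm, that still has the growing $\alpha t$ factor, so boundedness is not immediate and I should instead argue more carefully.

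The standard fix: first note $\sigma_x < \infty$ a.s. because $\mu > 0$ (the drift pushes $B^\mu \to \infty$). Apply optional stopping to the bounded martingale $\exp(\theta B^\mu_{t\wedge\sigma_x})\exp(-(\tfrac12\theta^2+\mu\theta)(t\wedge\sigma_x))$ at the bounded stopping time $t\wedge\sigma_x$ — here with $\theta \le 0$ the spatial factor is bounded by $1$ (as $B^\mu_{t\wedge\sigma_x} \ge$ some a.s.-finite running minimum — actually it is bounded below, not above, so $\exp(\theta \cdot)$ with $\theta<0$ is bounded above only if $B^\mu$ is bounded below, which it need not be on a fixed horizon, but it IS bounded below by the running minimum which is a.s. finite, giving a pathwise, possibly random, bound sufficient for a.s. limits). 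Then let $t \to \infty$: $\sigma_x<\infty$ gives $B^\mu_{t\wedge\sigma_x}\to x$ and $t\wedge\sigma_x \to \sigma_x$, and dominated (or monotone, using $\theta\le0$ so $\exp(-(\tfrac12\theta^2+\mu\theta)(t\wedge\sigma_x)) = \exp(\alpha(t\wedge\sigma_x))$ increasing, combined with Fatou) convergence yields
\begin{align*}
  1 = \E[M_0] = \E\bigl[\exp(\theta x)\exp(\alpha \sigma_x)\bigr] = e^{\theta x}\,\E[e^{\alpha\sigma_x}],
\end{align*}
hence $\E[e^{\alpha\sigma_x}] = e^{-\theta x} = e^{x(\mu - \sqrt{\mu^2-2\alpha})}$. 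Combining with $T_{\textnormal{coupl}} \le \sigma_x$ gives the claim.

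The main obstacle is the routine-but-fiddly justification of the optional-stopping / limit-exchange step, since the candidate martingale is not globally bounded; the cleanest presentation either cites the explicit inverse-Gaussian Laplace transform of $\sigma_x$ directly, or runs the martingale argument with the $\theta \le 0$ root (so the exponential-in-space factor is bounded above on the pre-passage set only up to the a.s.-finite running minimum) and closes the $t\to\infty$ limit via Fatou together with $\sigma_x < \infty$ a.s. I would write it with the martingale, flagging that only the root $\theta = -(\mu-\sqrt{\mu^2-2\alpha})$ is admissible and that $\alpha \le \mu^2/2$ is exactly the condition for this root to be real.
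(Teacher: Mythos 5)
Correct, and essentially the paper's argument: you dominate $T_{\textnormal{coupl}}$ by the first passage time of $B^\mu$ to the level $x$ (the paper reaches the same bound via the intermediate time $T=\inf\{t>0:S_t=B_t^\mu\}$, using that $S\le x$ until that time while $S,\widetilde S\ge B^\mu$ always, which is also the cleanest way to tighten your ``one checks in all cases'' step), and then use the exponential moment of that passage time. The only difference is that the paper simply cites $\E[e^{\alpha H_x}]=e^{x(\mu-\sqrt{\mu^2-2\alpha})}$ from Borodin--Salminen, whereas you re-derive it via the exponential martingale; since the lemma claims only an upper bound, your Fatou argument at the $t\to\infty$ step is indeed sufficient.
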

\begin{proof}
  At time $T_{\textnormal{coupl}}$ either both $S$ and $\widetilde S$ use the
  same point of the Poisson process $N^\gamma$ or the Brownian motion $B^\mu$
  touches  the maximum of $S$ and $\widetilde S$ which is $S$ by assumption 
  $\wt x \le x$. Thus, we have $T_{\textnormal{coupl}}\le T$ for   
  \begin{align*}
    T\coloneqq \inf\{t > 0: S_t=M_t\} = \inf\{t > 0: S_t=B_t^\mu\}.  
  \end{align*}
  By construction $S_t$ can increase only after this time $T$ and decrease by
  jumping down when it uses points of the Poisson process. Ignoring this
  decrease by jumping down we obtain  
  \begin{align*}
    T \le H_x\coloneqq  \inf\{t >0 : B_{t}^\mu = x\}.   
  \end{align*}
  It is well known that \citep[see e.g.][p.295]{BorodinSalminen:2002} for
  $\alpha \le \mu^2/2$ we have  
  \begin{align*}
    \mathbb E [e^{\alpha H_x}] = e^{x(\mu -\sqrt{\mu^2 - 2\alpha})}
  \end{align*}
  and the result follows. 
\end{proof}

\subsection[Green function]{Green function of the killed reflected Brownian
  motion with drift} 
\label{sec:green}

Since between the jumps the ratchet constructed in the previous section behaves
as a killed reflected Brownian motion we will need in the sequel some
functionals of that process, such as expected killing time or expected position
at killing. To this end we need to compute the corresponding Green function.

Let us first give a short description on how the Green function of a killed 
diffusion can be computed; for details we refer to Chapter~II in
\citep[][]{BorodinSalminen:2002} or Chapter~4 in \citep{MR0345224}. 

\begin{remark}[Green function of a reflected diffusion with
  killing] \label{rem:Green-gener} \leavevmode%
  \\
  Let $Y:=(Y(t))_{t \ge 0}$ be a reflected diffusion process with killing on
  state-space $[0,\infty)$ associated with infinitesimal operator $A$ acting on 
\begin{align*}
  \mathcal D(A)\coloneqq  \{f \in C_b^2(\R_+) : f'(0+) =0\} 
\end{align*}
as follows
\begin{align}
  \label{eq:generY}
  A f(y) \coloneqq \frac 12 f''(y) +b(x) f'(y) - c(x) f(x). 
\end{align}
We consider in this paper the cases $b(x) = -\mu$ or $b(x) = -\mu x$ and
$c(x)=\gamma x$. Since in both cases the killing time is almost surely finite
the resulting diffusions are transient. The speed and the killing measures of $Y$ 
are given by \citep[see e.g.][p.~17]{BorodinSalminen:2002}
\begin{align} \label{eq:speed-kill-Y}   
  m(dx)=m(x) \, dx \coloneqq  2e^{B(x)}\, dx  \quad \text{and} \quad  k(dx)=k(x)
  \, dx \coloneqq  2 c(x) e^{B(x)}\, dx,   
\end{align}
where $B(x) \coloneqq \int_0^x 2 b(y) \, dy$. Let $p(\cdot;\cdot,\cdot)$ denote 
the transition density of $Y$ with respect to the speed measure. Then the Green
function of $Y$ is defined by      
\begin{align*}
  G(x,y)\coloneqq \int_0^\infty p(t;x,y)\, dt. 
\end{align*}
In the transient regular case (the latter means here that every point in
$[0,\infty)$ can be reached with positive probability starting from any other
point) the Green function of $Y$ is positive and finite. It is obtained in terms
of two independent solutions $\phi$ and $\psi$ of the differential equation
$Af=0$ that are both unique up to a constant factor and satisfy the following
conditions
\begin{itemize}
\item[(i)] $\phi$ is positive and strictly decreasing with $\phi(x) \to 0$ as $x
  \to \infty$, 
\item[(ii)] $\psi$ is positive and strictly increasing,
\item[(iii)] $\psi'(0+) = 0$ (this condition is for the reflecting boundary).  
\end{itemize} 
The Wronskian, defined by $w(\psi,\phi) \coloneqq \psi'(x)\phi(x) -
\psi(x)\psi'(x)$ is independent of $x$. Thus, the functions $\phi$ and 
$\psi$ can be chosen so that their Wronskian equals one. Then, the Green
function of $Y$ is given by
\begin{align} \label{eq:Green-def} 
  G(x,y) = 
  \begin{cases}
    \phi(x)\psi(y) & : \; 0 \le y \le x,\\ 
    \psi(x) \phi(y) & : \; 0 \le x \le y.      
  \end{cases} 
\end{align}
In our computations we will in principle not need the exact expressions for
$\phi$, $\psi$ and $G$. In particular in the case of the Ornstein-Uhlenbeck  
process, where these solutions depend in a complicated manner on the model
parameters, we will not compute the function $\psi$ explicitly. Asymptotic
bounds at infinity will suffice for our purposes.    
\end{remark}

In the following remark we collect some properties of the Airy functions that
will be needed in the sequel. For further properties we refer to
\citep{AbramowitzStegun:1972} (cf. also Remark~5.2 in
\citep{DepperschmidtPfaffelhuber:2010}).

\begin{remark}[Airy functions]
  \label{rem:airy}   
  The Airy functions $Ai$ and $Bi$ are two linearly independent solutions of the
  differential equation
  \begin{align}
    \label{eq:4new}
    u''(x) - x u(x)=0. 
  \end{align}
  We will only need the properties of the Airy functions on $[0,\infty)$. On
  that domain the functions are positive, $Ai$ is decreasing with
  $Ai(x)\xrightarrow{x\to\infty} 0$ and $Bi$ is increasing with
  $Bi(x)\xrightarrow{x\to\infty} \infty$.

  The Wronskian is independent of $x$ and is given by 
  \begin{align} 
    \label{eq:56} 
    w(Ai,Bi) = 
    Bi'(0)Ai(0)-Ai'(0)Bi(0)  = \frac1\pi.    
  \end{align}
  The integral of $Ai$ on  $\R_+$ is  
  \begin{align} \label{eq:59} 
    \int_0^\infty Ai(u)\,du = \frac13.
  \end{align} 
  We will also need the function 
  \begin{align} \label{eq:58}
    Gi(x) \coloneqq  Ai(x)\int_0^xBi(y)\,dy + Bi(x)\int_x^\infty Ai(y)\,dy. 
  \end{align}
  For fixed $\mu \ge 0$ and $C \in \R$ we define a function $M$ by    
  \begin{align}  
    \label{eq:def.M} 
    \begin{split}
      M(x) & \coloneqq  \pi\Bigl\{Ai(\mu^2+x) \int_0^x (Bi(\mu^2+y) +C Ai(\mu^2+y)) \, dy \\ &  
      \phantom{AiBi} + (Bi(\mu^2+x) +C Ai(\mu^2+x))\int_x^\infty  Ai(\mu^2+y) \,
      dy\Bigr\}.   
    \end{split}
  \end{align}
  Using \eqref{eq:59}, \eqref{eq:58} and positivity of $Ai$ and $Bi$ on
  $[0,\infty)$ we have for $x \ge 0$
  \begin{align} 
    \label{eq:bound.M}  
    M(x) \le \pi\left( Gi(\mu^2+x) + \frac{\abs{C}}3 Ai(\mu^2+x)\right).    
  \end{align}
  Since the functions $Gi$ and $Ai$ are bounded on $[0,\infty)$
  we may define    
  \begin{align}
    \label{eq:g-star}
    G^*\coloneqq  G^*(C,\mu)\coloneqq  \pi \cdot \max_{x \ge 0} \Bigl\{
    Gi(\mu^2+x) +  \frac{\abs{C}}3 Ai(\mu^2+x) \Bigr\}.     
  \end{align}
  \hfill \qed
\end{remark}

\begin{definition}[Killed reflecting Brownian motion with drift]   
  \label{rem:killed-rbm} 
  Let $ Z \coloneqq (Z_t)_{t\ge 0}$ denote a reflecting Brownian motion with  
  drift $-\mu$ starting in $x \ge 0$ (see \eqref{eq:RBM}) under the law $\Pr_x$
  and let $\E_x$ denote the corresponding expectation. Furthermore using an
  exponentially distributed rate $1$ random variable $\xi$ independent of $Z$ we
  define the \emph{killing time} by 
  \begin{align*}
    \tau=\inf\{t >0: \gamma \int_0^t  Z_s\,ds \ge \xi\}.   
  \end{align*}
  Then \emph{reflecting Brownian motion with infinitesimal drift $-\mu$ killed
    at rate $\gamma Z$} is defined as the process $Z^K \coloneqq (Z^K_t)_{t\ge 0}$ 
  with $ Z^K_t =  Z_t$ for $t \in [0,\tau)$ and $ Z_t^K = \Delta$ for $t \ge
  \tau$ for some $\Delta \not\in \R$, often referred to as the \emph{cemetery 
    state.} The infinitesimal operator $A^{\mu,\gamma}$ corresponding to $
  Z^K$ acts on $C^2$ functions $f:[0,\infty) \to \R$ satisfying $f'(0+)=0$ as 
  follows  
  \begin{align} 
    \label{eq:gen-killed-RBM} 
     A^{\mu,\gamma} (x) = \frac12 f''(x)  - \mu f'(x) -\gamma x f(x).       
  \end{align}
\end{definition}
In view of the scaling property it is enough to prove the results for the
$(\gamma,\mu)$-Brownian ratchet in a particular case. In what follows we assume
$\gamma = \frac12$ and fix $\mu \ge 0$. In this case the speed and killing
measure corresponding to $Z^K$ are given by
\begin{align} \label{eq:speed-kill-KRBM}   
  m(dx)=m(x) \, dx \coloneqq  2e^{-2\mu x}\, dx  \quad \text{and} \quad  k(dx)=k(x)
  \, dx \coloneqq  x e^{-2\mu x}\, dx.
\end{align}

\begin{lemma}[Green function of killed reflecting Brownian motion with
  drift]\leavevmode \\  
  The \label{lem:green-krbm} Green function of $ Z^K$ is given by  
  \begin{align} \label{eq:green-1}
    G(x,y)\coloneqq  
    \begin{cases}
      \phi(x)\psi(y) & : \; 0 \le y \le x,\\ 
      \psi(x)\phi(y) & : \; 0 \le x \le y,   
    \end{cases}
  \end{align}
  where 
  $\phi, \psi:  [0,\infty) \to \R$ are defined by  
  \begin{align} \label{eq:def-phi-psi} 
    \begin{split}
      \phi(x) & = \pi e^{\mu x} Ai(\mu^2 + x), \\
      \psi(x) & = C  e^{\mu x} Ai(\mu^2 + x) + e^{\mu x} Bi(\mu^2 +x),    
    \end{split}
  \end{align}
  with 
  \begin{align} 
    \label{eq:Cmu}
    C\coloneqq C(\mu)\coloneqq -\frac{\mu Bi(\mu^2)+Bi'(\mu^2)}{\mu Ai(\mu^2)+Ai'(\mu^2)}.   
  \end{align} 
\end{lemma}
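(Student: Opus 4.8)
The plan is to reduce the defining ODE $A^{\mu,1/2}f=0$ to Airy's equation, read off from Remark~\ref{rem:Green-gener} the two distinguished solutions $\phi$ and $\psi$, fix the normalising constants, and quote \eqref{eq:Green-def}. Since $\gamma=\tfrac12$, the equation $A^{\mu,1/2}f=0$ in \eqref{eq:gen-killed-RBM} reads $f''-2\mu f'-xf=0$. The substitution $f(x)=e^{\mu x}g(x)$ removes the first-order term and yields $g''(x)-(x+\mu^2)g(x)=0$; after the shift $x\mapsto x+\mu^2$ this is Airy's equation \eqref{eq:4new}. Hence every solution of $A^{\mu,1/2}f=0$ has the form $f(x)=e^{\mu x}\bigl(a\,Ai(\mu^2+x)+b\,Bi(\mu^2+x)\bigr)$ with $a,b\in\R$, which is the expected two-dimensional solution space.

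Next I isolate $\phi$ and $\psi$ using the conditions in Remark~\ref{rem:Green-gener}. On $[0,\infty)$ one has $Ai,Bi>0$ by Remark~\ref{rem:airy}, while $e^{\mu x}Bi(\mu^2+x)$ is increasing and unbounded and $e^{\mu x}Ai(\mu^2+x)\to0$ by the super-exponential decay of $Ai$; so the only solutions vanishing at infinity are multiples of $e^{\mu x}Ai(\mu^2+x)$, which is the $\phi$ of condition (i), normalised by the factor $\pi$ fixed below. For $\psi$ I impose the reflecting (Neumann) condition (iii): differentiating the general solution and evaluating at $0$ gives $\psi'(0+)=a\bigl(\mu Ai(\mu^2)+Ai'(\mu^2)\bigr)+b\bigl(\mu Bi(\mu^2)+Bi'(\mu^2)\bigr)$, so $\psi'(0+)=0$ forces the ratio $a:b=C:1$ with $C$ as in \eqref{eq:Cmu}; this uses $\mu Ai(\mu^2)+Ai'(\mu^2)\neq0$, obtained as a by-product below. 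Thus $\phi$ and $\psi$ have exactly the form \eqref{eq:def-phi-psi}. To verify positivity and strict monotonicity I use the ODE rather than Airy estimates (note $C$ may be negative, so positivity of $C\,Ai+Bi$ is not evident): any solution satisfies $f''=2\mu f'+xf$, and if $f>0$ on $[0,\infty)$ with $f(\infty)=0$ then a ``supremum of the good set'' argument shows $f'<0$ on $(0,\infty)$ — were $f'(a)\ge0$ for some $a>0$ then $f''>0$ thereafter, forcing $f\to+\infty$. Applied to $\phi$ this gives the strict decrease and in particular $\phi'(0+)<0$, i.e.\ $\mu Ai(\mu^2)+Ai'(\mu^2)<0\neq0$, as used above. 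For $\psi$ one has $\psi(0)>0$ (else $\psi\equiv0$), $\psi'(0)=\psi''(0)=0$ and $\psi'''(0)=\psi(0)>0$, so $\psi'>0$ just right of $0$, and then $\psi''=2\mu\psi'+x\psi>0$ as long as $\psi>0$, giving $\psi,\psi'>0$ on $(0,\infty)$; conditions (i)--(iii) hold.

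It remains to normalise and conclude. A direct computation of $\psi'(x)\phi(x)-\psi(x)\phi'(x)$ with $\phi,\psi$ from \eqref{eq:def-phi-psi} makes all $Ai$--$Ai$ and mixed $\mu$-terms cancel, leaving $\pi e^{2\mu x}\bigl(Ai(\mu^2+x)Bi'(\mu^2+x)-Ai'(\mu^2+x)Bi(\mu^2+x)\bigr)=\pi e^{2\mu x}\,w(Ai,Bi)=e^{2\mu x}$ by \eqref{eq:56}. For the drift $b(x)=-\mu$ the scale density is $e^{-B(x)}=e^{2\mu x}$, so the Wronskian in the normalisation of Remark~\ref{rem:Green-gener}, i.e.\ after dividing by the scale density, equals $1$; this is precisely why the constant $\pi$ appears in $\phi$. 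Plugging $\phi$ and $\psi$ into \eqref{eq:Green-def} gives \eqref{eq:green-1}.

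The only genuinely delicate step is the verification of conditions (i)--(iii), in particular the positivity and monotonicity of $\psi$, for which the differential-inequality argument above is much cleaner than any attempt via Airy asymptotics; a secondary bookkeeping point is tracking the scale-function normalisation in the Wronskian, which is what produces the $e^{2\mu x}$ and is cancelled against $e^{-B(x)}$.
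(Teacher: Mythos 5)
Your proposal is correct and follows the same overall strategy as the paper: identify the solution space of $A^{\mu,1/2}f=0$ as $e^{\mu x}\bigl(a\,Ai(\mu^2+x)+b\,Bi(\mu^2+x)\bigr)$ via the reduction to Airy's equation \eqref{eq:4new}, take $\phi$ to be the decaying solution, fix $C$ in \eqref{eq:Cmu} by the Neumann condition $\psi'(0+)=0$, verify conditions (i)--(iii) of Remark~\ref{rem:Green-gener}, normalise the Wronskian and quote \eqref{eq:Green-def}. The differences are in the two supporting verifications. For positivity and strict monotonicity the paper (Lemma~\ref{lem:prop-phi-psi}) works directly with the Airy functions, proving $\mu Ai(\mu^2+x)+Ai'(\mu^2+x)<0$ by local-maximum contradictions based on $Ai''(z)=zAi(z)$, and controlling $\psi'$ through the ratio $h(\mu,0)/g(\mu,0)$; you instead run a forward-invariance/blow-up argument on the transformed equation $f''=2\mu f'+xf$, which is shorter and needs only the boundary data and the behaviour at infinity. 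On the normalisation you are in fact more careful than the paper: by Abel's formula the plain Wronskian of $\phi,\psi$ equals a constant times $e^{2\mu x}$, so the remark's ``independent of $x$'' is literally true only after dividing by the scale density $e^{2\mu x}$ (the Borodin--Salminen convention you invoke); evaluating at $x=0$, where the scale density is $1$, both computations give the value $1$ via \eqref{eq:56}, so your route and the paper's agree.

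Two small justification slips, each a one-line fix. First, ``$\psi(0)>0$ (else $\psi\equiv0$)'' only rules out $\psi(0)=0$, not a negative value; the positive sign does follow from what you already proved, since $\mu Ai(\mu^2)+Ai'(\mu^2)<0$ together with $\mu Bi(\mu^2)+Bi'(\mu^2)>0$ gives $C>0$ in \eqref{eq:Cmu} (so, contrary to your aside, $C$ cannot be negative), whence $\psi(0)=C\,Ai(\mu^2)+Bi(\mu^2)>0$; alternatively, $\psi(x)\to+\infty$ from the $Bi$ term forces the sign. Second, your blow-up argument as stated covers $a>0$ and hence yields $\phi'<0$ on $(0,\infty)$, so directly only $\phi'(0+)\le 0$; strictness at $0$ needs the extra observation that $\phi'(0)=0$ would give $\phi''(0)=2\mu\phi'(0)=0$ and $\phi'''(0)=\phi(0)>0$, making $\phi$ increase immediately to the right of $0$, contradicting $\phi'<0$ on $(0,\infty)$. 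With these two remarks inserted, the argument is complete.
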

\begin{proof}
  As explained in Remark~\ref{rem:Green-gener} the Green function $G$ is
  obtained in terms of solutions of 
  \begin{align}
    \label{eq:green-2}
     A^{\mu,1/2} u(x) = 0, \; x  \ge 0,    
  \end{align}
  where $ A^{\mu,1/2}$ is defined in \eqref{eq:gen-killed-RBM}.  
  
  The functions $\phi$ and $\psi$ defined in \eqref{eq:def-phi-psi} are two
  independent solutions of \eqref{eq:green-2}. In Lemma~\ref{lem:prop-phi-psi}
  we show that they satisfy conditions (i) and (ii) from
  Remark~\ref{rem:Green-gener}, whereas (iii) holds by the choice of $C=C(\mu)$.
  It remains to show $w(\psi,\phi)=1$. Using independence of the Wronskian of
  $x$ and \eqref{eq:56} we obtain
  \begin{align*}
    \psi'(0)\phi(0)-\psi(0)\phi'(0) & = \mu
    \psi(0)\phi(0)+(Bi'(\mu^2)+CAi'(\mu^2)) \pi Ai(\mu^2) 
    \\ & \qquad
    - \mu \phi(0)\psi(0) - \pi Ai'(\mu^2) (Bi(\mu^2)+CAi(\mu^2)) \\   
    & = \pi\left (Bi'(\mu^2)Ai(\mu^2)-Bi(\mu^2)Ai'(\mu^2)\right)   \\
    & = \pi\left(Bi'(0)Ai(0)-Bi(0)Ai'(0)\right) =1.
  \end{align*}
  In particular, $w(\psi,\phi)$ is also independent of $\mu$. Altogether the
  assertion of the lemma follows.    
\end{proof}


\begin{lemma}[Properties of $\phi$ and $\psi$] 
  Let $\phi$ and $\psi$ be defined by \eqref{eq:def-phi-psi}.
  For \label{lem:prop-phi-psi} any $\mu \ge 0$ the function $\phi$ is strictly
  decreasing and the function $\psi$ is strictly increasing in $x$.
\end{lemma}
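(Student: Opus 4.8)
The plan is to work with the substitutions that turn $\phi$ and $\psi$ into combinations of Airy functions evaluated at the shifted argument $\mu^2+x$, and to compute derivatives explicitly, reducing everything to known monotonicity and sign properties of $Ai$, $Bi$ on $[0,\infty)$ recorded in Remark~\ref{rem:airy}. Recall $\phi(x)=\pi e^{\mu x}Ai(\mu^2+x)$. Differentiating,
\begin{align*}
  \phi'(x) = \pi e^{\mu x}\bigl(\mu\,Ai(\mu^2+x) + Ai'(\mu^2+x)\bigr).
\end{align*}
So the claim that $\phi$ is strictly decreasing amounts to showing $g(x)\coloneqq \mu\,Ai(\mu^2+x)+Ai'(\mu^2+x)<0$ for all $x\ge 0$. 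The key observation is that $g$ itself satisfies a useful ODE: since $Ai''(t)=tAi(t)$, one gets $g'(x)=\mu Ai'(\mu^2+x)+(\mu^2+x)Ai(\mu^2+x)$, and a short manipulation shows $g'(x)-\mu g(x)=x\,Ai(\mu^2+x)>0$ (strictly, for $x>0$; and $\ge 0$ at $x=0$). Thus $(e^{-\mu x}g(x))' = e^{-\mu x}(g'(x)-\mu g(x))\ge 0$, so $e^{-\mu x}g(x)$ is non-decreasing. Since $Ai(t)\to 0$ and $Ai'(t)\to 0$ as $t\to\infty$ while $e^{-\mu x}$ stays bounded (for $\mu\ge 0$), we have $e^{-\mu x}g(x)\to 0$ as $x\to\infty$; being non-decreasing and tending to $0$, it must be $\le 0$ everywhere, and in fact $<0$ for finite $x$ because it is strictly increasing somewhere to its right. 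Hence $g(x)<0$ and $\phi'(x)<0$ for all $x\ge 0$.

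For $\psi$, write $\psi(x)=e^{\mu x}\bigl(C\,Ai(\mu^2+x)+Bi(\mu^2+x)\bigr)$ and set $h(x)\coloneqq C\,Ai(\mu^2+x)+Bi(\mu^2+x)$, so $\psi(x)=e^{\mu x}h(x)$ and
\begin{align*}
  \psi'(x) = e^{\mu x}\bigl(\mu\,h(x) + h'(x)\bigr).
\end{align*}
By the choice \eqref{eq:Cmu} of $C$ we have $\mu h(0)+h'(0)=0$, i.e. $\psi'(0)=0$, which is condition (iii). Let $k(x)\coloneqq \mu h(x)+h'(x)$; exactly as above, using $Bi''(t)=tBi(t)$ and $Ai''(t)=tAi(t)$, one computes $k'(x)-\mu k(x)=x\,h(x)$. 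Now $h(x)=C\,Ai(\mu^2+x)+Bi(\mu^2+x)$; since $\psi$ is meant to be the increasing solution, $h$ should be positive, but a priori we only know $Bi>0$ and $Ai>0$ on $[0,\infty)$ — the sign of $C$ is not obvious from \eqref{eq:Cmu}. The cleanest route is to note $k(0)=0$ and $k'(0)-\mu k(0)=0$, and then argue that $h>0$ on $[0,\infty)$: indeed $h$ solves $h''=(\mu^2+x)h$ (after accounting for the shift) with $h(0)>0$ — one checks $h(0)=C\,Ai(\mu^2)+Bi(\mu^2)>0$ using \eqref{eq:Cmu}, since the defining ratio, combined with $w(Ai,Bi)=1/\pi>0$ and positivity of $Ai,Bi$, forces $h(0)$ and $h'(0)$ to have a definite sign — and a solution of $h''=(\mu^2+x)h$ that is positive at $0$ with $h'(0)\ge$ the critical value cannot return to $0$ (a positive solution of such a disconjugate equation stays positive once it is increasing, and $Bi$-type growth dominates). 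Granting $h>0$, we get $k'(x)-\mu k(x)=x\,h(x)>0$ for $x>0$, so $e^{-\mu x}k(x)$ is strictly increasing; since $k(0)=0$, this gives $k(x)>0$ for $x>0$, hence $\psi'(x)=e^{\mu x}k(x)>0$ for $x>0$ (and $\psi'(0)=0$), so $\psi$ is strictly increasing on $[0,\infty)$.

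The main obstacle is pinning down the sign of $C$ (equivalently, the positivity of $h$) cleanly. The ratio in \eqref{eq:Cmu} is $-(\mu Bi(\mu^2)+Bi'(\mu^2))/(\mu Ai(\mu^2)+Ai'(\mu^2))$; the denominator is exactly $g(0)$ from the $\phi$-analysis, which we showed is $<0$, and the numerator is $\mu Bi(\mu^2)+Bi'(\mu^2)>0$ since $Bi$ and $Bi'$ are positive and increasing on $[0,\infty)$. Hence $C>0$, and then $h(x)=C\,Ai(\mu^2+x)+Bi(\mu^2+x)>0$ on $[0,\infty)$ is immediate from positivity of $Ai$ and $Bi$ — this is in fact the slickest version of the argument and removes the need for any disconjugacy reasoning. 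So the order I would carry things out is: (1) derive $g'-\mu g=xAi(\mu^2+x)$ and conclude $\phi'<0$ via the integrating-factor / boundary-at-infinity argument; (2) observe that the denominator in \eqref{eq:Cmu} equals $g(0)<0$ and the numerator is manifestly positive, so $C>0$; (3) conclude $h>0$ trivially, derive $k'-\mu k=xh$, use $k(0)=0$ from the definition of $C$, and conclude $\psi'>0$ on $(0,\infty)$ with $\psi'(0)=0$.
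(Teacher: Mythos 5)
Your argument is correct, but it follows a genuinely different route from the paper's proof. For $\phi$, the paper establishes $g(\mu,x)=\mu Ai(\mu^2+x)+Ai'(\mu^2+x)<0$ by two successive contradiction arguments at interior local maxima (first in $\mu$ at $x=0$, then in $x$ for fixed $\mu$), whereas you derive the first-order identity $g'(x)-\mu g(x)=x\,Ai(\mu^2+x)\ge 0$ from the Airy equation and conclude via the integrating factor that $e^{-\mu x}g(x)$ increases to its limit $0$ at infinity, hence is negative; this handles $x=0$ and $x>0$ in one stroke and needs only $Ai,Ai'\to 0$, the same fact the paper implicitly uses when it asserts $g(\mu,x)\to 0$. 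For $\psi$, the paper writes $\psi'(x)=e^{\mu x}\bigl(h(\mu,x)-\tfrac{h(\mu,0)}{g(\mu,0)}g(\mu,x)\bigr)$ with $h(\mu,x)=\mu Bi(\mu^2+x)+Bi'(\mu^2+x)$ and bounds this below using $\abs{g(\mu,x)/g(\mu,0)}<1$ together with monotonicity of $Bi,Bi'$; you instead observe that the denominator in \eqref{eq:Cmu} is exactly $g(\mu,0)<0$ while the numerator is positive, so $C>0$, making $C\,Ai(\mu^2+x)+Bi(\mu^2+x)>0$ immediate, and then repeat the integrating-factor argument for $k(x)=e^{-\mu x}\psi'(x)$ with $k(0)=0$ to get $\psi'>0$ on $(0,\infty)$. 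Your version buys two things: it makes the sign of $C$ explicit (a mildly useful byproduct), and it avoids the comparison $\abs{g(\mu,x)/g(\mu,0)}<1$, which the paper invokes with only a brief justification; your final three-step plan is self-contained. One editorial remark: the exploratory middle passage about disconjugacy and ``the critical value'' of $h'(0)$ is not needed and, as written, is not a proof — you correctly discard it at the end, and only the streamlined version (sign of $C$, then the ODE for $k$) should be kept.
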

\begin{proof}
  Properties of the Airy function $Ai$ (see Remark~\ref{rem:airy}) imply that
  for any $\mu \ge 0$ the function $\phi$ is positive and that $\phi(x)\to 0$
  as $x\to \infty$. We will show that $\phi'(x) <0$. To this end, it is enough
  to show that for $\mu \ge 0$, $x \ge 0$ 
  \begin{align*}
    g(\mu,x) \coloneqq  \mu Ai(\mu^2+x)+Ai'(\mu^2+x) < 0.   
  \end{align*}

  First we show that $g_1(\mu)=g(\mu,0) <0$ for $\mu\ge0$. The assertion is
  true for $\mu=0$ and for $\mu \to \infty$ we have $g_1(\mu)\to 0$. So if
  $g_1(\mu)$ is positive on some interval then there is a local maximum in
  some $\mu_0$ such that on the one hand we have $g_1(\mu_0) >0$ and on the
  other hand 
  \begin{align*}
    0  & = g_1'(\mu_0)  = Ai(\mu_0^2) +  
    2\mu_0^2 Ai'(\mu_0^2) + 2\mu_0 Ai''(\mu_0^2) \\  & = Ai(\mu_0^2) +
    2\mu_0^2(Ai'(\mu_0^2) + \mu_0 Ai(\mu_0^2)) \\ 
    & = Ai(\mu_0^2) + 2\mu_0^2 g_1(\mu_0) >0,
  \end{align*}
  leading to a contradiction. 
  
  Now we fix $\mu \ge 0$ and show $g(\mu,x)<0$ for all $x \ge 0$. For $x=0$ it
  is true by the above argument. As $x \to \infty$ we have $g(\mu,x) \to
  0$. If $g(\mu,\cdot)$ has positive values in the interval $(0,\infty)$ then 
  there is a local maximum $x_0$ such that $g(\mu,x_0) >0$ and  
  \begin{align*}
    0 & = \frac{\partial}{\partial x}
    g(\mu,x_0)  = \mu Ai'(\mu^2+x_0) + Ai''(\mu^2+x_0) \\ 
    & = \mu Ai'(\mu^2+x_0) + \mu^2Ai(\mu^2+x_0) + x^2 Ai(\mu^2+x_0)\\ &  
    = \mu  g(\mu,x_0)+x^2Ai(\mu^2+x_0) >0,       
  \end{align*}
  leading again to a contradiction. 
  
  It remains to show that $\psi$ is increasing. By the choice of $C$ we have
  $\psi'(0)=0$. Let  $h(\mu,x)=\mu Bi(\mu^2+x)+Bi'(\mu^2+x)$. For all $x>0$ and
  $\mu \ge 0$ we have  
  \begin{align*}
    \psi'(x)= e^{\mu x}\bigl(h(\mu,x) -
    \frac{h(\mu,0)}{g(\mu,0)}g(\mu,x)\bigr) >0. 
  \end{align*}
  To see this note that, as we have shown above,
  $g(\mu,x)/g(\mu,0)=|g(\mu,x)/g(\mu,0)|<1$ and therefore   
  \begin{align*}
    h(\mu,x) - \frac{h(\mu,0)}{g(\mu,0)}g(\mu,x) \ge h(\mu,x) - h(\mu,0) >0. 
  \end{align*}
  The last inequality follows from the fact that $Bi$ and $Bi'$ are
  increasing.   
\end{proof}

We set 
\begin{align} 
  \label{eq:tildephi}
  \Phi(x) & \coloneqq e^{-2\mu x} \phi(x) = \pi e^{-\mu x} Ai(\mu^2+x), \\    
  \label{eq:tildepsi} 
  \Psi(x) & \coloneqq e^{-2\mu x} \psi(x) = e^{-\mu x}(Bi(\mu^2+x)+C Ai(\mu^2+x))        
\end{align}
and note that $\Phi$ and $ \Psi$ solve the differential equation    
\begin{align} 
  \label{eq:5} 
  u''(x) + 2\mu u'(x) - x u(x) =0,   
\end{align}
and that $ \Phi$ is up to a constant factor the unique decreasing solution of
that equation satisfying $ \Phi(x) \to 0$ as $x \to \infty$. Furthermore we have       
\begin{align}
  \label{eq:2} 
   \Psi'(0) + 2 \mu  \Psi(0)   = \psi'(0) = 0
 \end{align}
 and simple calculation shows 
\begin{align} 
  \label{eq:3}
  \phi(x) \Psi'(x)   - \psi(x)  \Phi'(x) = w (\psi,\phi) = 1.   
\end{align}

\begin{remark}[Expected killing time and the density of the killing position]
  \label{rem:exp-kill} \leavevmode%
  \\ 
  Using the Green function one can compute the mean killing time of the killed  
  reflecting Brownian motion starting in $x \ge 0$. It is given by   
  \begin{align}\label{eq:1} 
    \E_x [\tau] & = \int_0^\infty G(x,y) m(y) \, dy = \int_0^\infty
    G(x,y)2e^{-2\mu y}  \, dy \\ 
    \intertext{which can be written as}  \label{eq:42} 
    & = 2\left(\phi(x) \int_0^x \Psi(y)\,dy + \psi(x) \int_x^\infty
      \Phi(y)\, dy \right).
  \end{align}
  Furthermore the density of $Z^K_{\tau-}$, i.e.\ the position at killing time
  is given by  \citep[see][p.~14]{BorodinSalminen:2002}   
  \begin{align} \label{eq:4} 
    G(x,y) k(y) = G(x,y) y e^{-2\mu y}.  
  \end{align} 
\qed
\end{remark}

\begin{lemma}[Exponential moments of the killing position]  
  \label{lem:exp.m.killing} \leavevmode \\    
  For $\alpha < \mu$ and  any $x \ge 0$  we have
  \begin{align}
    \label{eq:exp.m.killing}
    \E_x [e^{\alpha Z_{\tau-}^K}] < \infty.    
  \end{align}  
\end{lemma}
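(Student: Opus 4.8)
The plan is to compute the expectation directly from the density of the killing position given in Remark~\ref{rem:exp-kill}, namely $G(x,y)\,y\,e^{-2\mu y}$, and to show that the resulting integral converges when $\alpha<\mu$. Thus I would write
\begin{align*}
  \E_x\bigl[e^{\alpha Z_{\tau-}^K}\bigr] = \int_0^\infty e^{\alpha y} G(x,y)\, y\, e^{-2\mu y}\, dy
  = \phi(x)\int_0^x e^{(\alpha-2\mu)y} y\,\psi(y)\, dy + \psi(x)\int_x^\infty e^{(\alpha-2\mu)y}\, y\,\phi(y)\, dy,
\end{align*}
using the case distinction in \eqref{eq:green-1}. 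The first integral is over the compact interval $[0,x]$ and is therefore automatically finite since $\psi$ is continuous; the whole question reduces to the convergence at $+\infty$ of the second integral, i.e.\ of $\int_x^\infty e^{(\alpha-2\mu)y}\,y\,\phi(y)\,dy$.

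For that I would use the explicit form $\phi(y)=\pi e^{\mu y} Ai(\mu^2+y)$ from \eqref{eq:def-phi-psi}, so that $e^{(\alpha-2\mu)y} y\,\phi(y) = \pi\, y\, e^{(\alpha-\mu)y} Ai(\mu^2+y)$. Since $\alpha<\mu$, the exponential factor $e^{(\alpha-\mu)y}$ decays; and $Ai(\mu^2+y)$ is positive, decreasing and bounded on $[0,\infty)$ (Remark~\ref{rem:airy}), so the integrand is dominated by $C\,y\,e^{(\alpha-\mu)y}$, which is integrable on $[x,\infty)$. Hence the second integral converges and the claim follows. (One may even invoke the classical asymptotics $Ai(z)\sim \frac{1}{2\sqrt\pi} z^{-1/4} e^{-\frac23 z^{3/2}}$ to get super-exponential decay, but mere boundedness of $Ai$ already suffices.)

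There is essentially no obstacle here beyond bookkeeping: the only point requiring a moment's care is the split of the Green-function integral at $y=x$ and checking that the finite part is harmless, together with making sure the exponential rate in $\phi$ (the factor $e^{\mu y}$) is correctly cancelled so that the decisive exponent is $\alpha-\mu<0$ rather than $\alpha-2\mu$. I would state the argument for $\mu>0$ (for $\mu=0$ there is nothing in the range $\alpha<0$ that is interesting, and the relevant results are anyway taken from \citep{DepperschmidtPfaffelhuber:2010}), and I would remark that the same computation shows the killing position $Z_{\tau-}^K$ has finite moments of all orders, which is what will actually be used later when bounding moments of the increments of the ratchet.
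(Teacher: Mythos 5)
Your proposal is correct and follows essentially the same route as the paper: both start from the killing-position density $G(x,y)\,y\,e^{-2\mu y}$ of \eqref{eq:4}, split the integral at $y=x$ using \eqref{eq:green-1}, insert the Airy-function expressions \eqref{eq:def-phi-psi}, and exploit that the decisive exponent is $\alpha-\mu<0$. The only difference is cosmetic: the paper bounds $y e^{-(\mu-\alpha)y}$ by its maximum $y^*$ and uses the uniform bound $G^*$ from \eqref{eq:g-star} to obtain the explicit estimate $e^{\mu x}y^*G^*$, whereas you argue convergence by domination, which equally suffices for the later applications.
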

\begin{proof}
  Set $y^*\coloneqq \max_{y \ge 0}\{y e^{-(\mu-\alpha) y}\}$ and recall the
  function $M$ (for $C$ defined in \eqref{eq:Cmu}) and its bound $G^*$ in
  \eqref{eq:def.M} and \eqref{eq:g-star}. Then \eqref{eq:exp.m.killing} follows
  from
  \begin{align*}
    \E_x [e^{\alpha Z_{\tau-}}] & =  \int_0^\infty e^{\alpha y} G(x,y) y
    e^{-2\mu y} \, dy\\   
    & = \phi(x) \int_0^x y \psi(y)e^{-(2\mu-\alpha) y} \, dy + 
    \psi(x) \int_x^\infty y \phi(y)e^{-(2\mu-\alpha) y} \, dy \\ 
    & = e^{\mu x} \pi \Bigl\{Ai(\mu^2+x) \int_0^x y e^{-(\mu-\alpha)y} (Bi(\mu^2+y)
    +C Ai(\mu^2+y)) \, dy \\  
    & \qquad \quad + (Bi(\mu^2+x) +C Ai(\mu^2+x))\int_x^\infty y
    e^{-(\mu-\alpha)y} Ai(\mu^2+y) \, dy \Bigr\} \\ & \le e^{\mu x} y^* M (x)
    \le e^{\mu x} y^* G^*.   
  \end{align*}
\end{proof}

\begin{lemma}[Second moments of the killing time]  
  \label{lem:2nd.mom.kill.time}  \leavevmode \\     
  There is a positive finite constant $C^{\tn{kill}}$ such that for all $x \ge 0$  
  \begin{align}
    \label{eq:2nd.mom.kill.time}
    \E_x [\tau^2] < C^{\tn{kill}}.     
  \end{align}
\end{lemma}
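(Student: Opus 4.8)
The plan is to reduce the second moment to the first moment via the classical Green-function identity, and then do the real work: a bound on $\E_x[\tau]$ that is \emph{uniform} in the starting point $x$.

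\emph{Reduction.} Write $\tau^2 = 2\int_0^\tau(\tau-s)\,ds$. On $\{s<\tau\}$, by the Markov property of the killed process $Z^K$, the conditional expectation of $\tau-s$ given $\FF_s$ equals $g(Z^K_s)$ with $g(y):=\E_y[\tau]$, so $\E_x\int_0^\tau(\tau-s)\,ds=\E_x\int_0^\tau g(Z^K_s)\,ds$. Combined with the Green-function representation $\E_x\bigl[\int_0^\tau f(Z^K_s)\,ds\bigr]=\int_0^\infty G(x,y)f(y)m(y)\,dy$ (for nonnegative measurable $f$; this is exactly how \eqref{eq:1} is obtained, cf.\ Remark~\ref{rem:Green-gener}), this gives
\[
\E_x[\tau^2]=2\int_0^\infty G(x,y)\,\E_y[\tau]\,m(y)\,dy .
\]
Hence, once I show $C^*:=\sup_{y\ge 0}\E_y[\tau]<\infty$, I immediately get $\E_x[\tau^2]\le 2C^*\int_0^\infty G(x,y)m(y)\,dy=2C^*\,\E_x[\tau]\le 2(C^*)^2$ using \eqref{eq:1} and the already-established $\E_x[\tau]\le C^*$, so the lemma holds with $C^{\tn{kill}}:=2(C^*)^2+1$.

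\emph{Uniform bound on $\E_x[\tau]$.} This is the crux. I start from \eqref{eq:42}, $\E_x[\tau]=2\bigl(\phi(x)\int_0^x\Psi(y)\,dy+\psi(x)\int_x^\infty\Phi(y)\,dy\bigr)$. Since $x\mapsto\E_x[\tau]$ is continuous on $[0,\infty)$, it suffices to bound it for $x\ge 1$; on $[0,1]$ the two summands are at most $\phi(0)\int_0^1\Psi$ and $\psi(1)\int_0^\infty\Phi$ by monotonicity of $\phi,\psi$ (Lemma~\ref{lem:prop-phi-psi}), both finite. For $x\ge 1$ I exploit that $\Phi,\Psi\ge 0$ solve \eqref{eq:5}, i.e.\ $y\Psi=\Psi''+2\mu\Psi'$ and $y\Phi=\Phi''+2\mu\Phi'$; since $y\ge 1$ on the relevant ranges,
\[
\int_1^x\Psi(y)\,dy\le\int_1^x y\Psi(y)\,dy=\bigl[\Psi'+2\mu\Psi\bigr]_1^x,\qquad
\int_x^\infty\Phi(y)\,dy\le\int_x^\infty y\Phi(y)\,dy=-\bigl(\Phi'+2\mu\Phi\bigr)(x),
\]
using $\Phi,\Phi'\to 0$ at $\infty$. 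From $\Phi=e^{-2\mu x}\phi$ and $\Psi=e^{-2\mu x}\psi$ one checks $\Psi'+2\mu\Psi=e^{-2\mu x}\psi'$ and $\phi\Psi'+2\mu\psi\Phi=e^{-2\mu x}\phi\psi'$; substituting and invoking the normalised Wronskian \eqref{eq:3}, $\phi\Psi'-\psi\Phi'=1$, yields for $x\ge 1$
\[
\psi(x)\int_x^\infty\Phi(y)\,dy\le 1-e^{-2\mu x}\phi(x)\psi'(x)\le 1,\qquad
\phi(x)\int_0^x\Psi(y)\,dy\le \phi(0)\int_0^1\Psi(y)\,dy+\Phi(x)\psi'(x),
\]
where I again used $\phi$ decreasing, $\psi$ increasing. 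Finally, by \eqref{eq:tildephi} and \eqref{eq:def-phi-psi} the last product equals $\pi\,Ai(\mu^2+x)\bigl(\mu(CAi+Bi)(\mu^2+x)+(CAi'+Bi')(\mu^2+x)\bigr)$, and the asymptotics of the Airy functions (Remark~\ref{rem:airy}, \citep{AbramowitzStegun:1972}) show that each of $Ai\cdot Bi$, $Ai\cdot Bi'$, $Ai\cdot Ai'$, $Ai^2$ stays bounded on $[0,\infty)$ (the dominant term, $\pi\,Ai(w)Bi'(w)$, tends to $\tfrac12$). Hence $\sup_{x\ge 0}\E_x[\tau]=:C^*<\infty$.

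\emph{Main obstacle.} Everything except the uniform bound on $\E_x[\tau]$ is routine. The difficulty in that step is that $\int_0^x\Psi(y)\,dy$ and $\int_x^\infty\Phi(y)\,dy$ a priori grow/decay like $\psi(x)$ (which blows up) and $|\Phi'(x)|$, so one has to see that in the precise combinations occurring in \eqref{eq:42} these bad factors cancel; the cancellation is produced exactly by the ODE \eqref{eq:5} and the Wronskian \eqref{eq:3}, with the Airy asymptotics needed only to bound certain products of $Ai,Bi,Ai',Bi'$. (For $\mu=0$ this is the estimate already carried out in \citep{DepperschmidtPfaffelhuber:2010}.)
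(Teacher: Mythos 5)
Your proof is correct. Both arguments rest on the same identity -- your reduction $\E_x[\tau^2]=2\int_0^\infty G(x,y)\,\E_y[\tau]\,m(y)\,dy$ is exactly Kac's moment formula, which the paper cites from \citep{0962.60067} rather than re-deriving -- but the way you control the resulting integral is genuinely different. The paper first reduces to the starting point $x=0$ (the killing time started from $x$ is stochastically dominated by the one started from $0$, via the pathwise monotone coupling $Z^{\mu,x}\ge Z^{\mu,0}$ in \eqref{eq:RBM}), then bounds the inner quantity only non-uniformly, $\E_y[\tau]\le 2G^*e^{\mu y}$ using $M$ and $G^*$ from \eqref{eq:def.M}--\eqref{eq:g-star}, and lets the decaying outer kernel $G(0,y)m(y)=2\pi\psi(0)e^{-\mu y}Ai(\mu^2+y)$ absorb the factor $e^{\mu y}$. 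You instead prove the stronger statement $\sup_{x\ge0}\E_x[\tau]<\infty$ analytically, by exploiting the ODE \eqref{eq:5} and the Wronskian \eqref{eq:3} to produce the cancellations $\psi(x)\int_x^\infty\Phi\le 1-e^{-2\mu x}\phi\psi'\le 1$ and $\phi(x)\int_0^x\Psi\le\phi(0)\int_0^1\Psi+\Phi(x)\psi'(x)$, and then boundedness of $Ai\cdot Bi'$, $Ai\cdot Bi$, $Ai\cdot Ai'$, $Ai^2$ (which needs the classical Airy asymptotics from \citep{AbramowitzStegun:1972}, not just the monotonicity facts recorded in Remark~\ref{rem:airy}); with that, $\E_x[\tau^2]\le 2(C^*)^2$ follows for every $x$ with no reduction to $x=0$. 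All the individual steps check out (in particular $\Psi'+2\mu\Psi=e^{-2\mu x}\psi'$, $\Phi'+2\mu\Phi=e^{-2\mu x}\phi'$, and $\pi Ai(w)Bi'(w)\to\tfrac12$). The one thing worth pointing out is that the step you call the crux is avoidable: the same monotone coupling the paper uses already gives $\E_x[\tau]\le\E_0[\tau]<\infty$ (by \eqref{eq:1}/\eqref{eq:42}) in one line, so your uniform bound $C^*$ is immediate and the ODE/Wronskian computation, while correct and of independent interest (it quantifies $C^*$ explicitly), is not needed for the lemma; your route buys a cleaner, $x$-independent form of the final bound, while the paper's route gets by with cruder estimates at the cost of the preliminary reduction to $x=0$.
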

\begin{proof}
  Since the killing time of the killed reflecting Brownian motion starting in $x
  \ge 0$ is bounded stochastically by the killing time of the killed reflecting
  Brownian motion starting in $0$, we have
  \begin{align*}
    \mathbb E_x[\tau^2] \le 1+ \mathbb E_0[\tau^2]. 
  \end{align*}
  By the Kac's moment formula \citep[see e.g.][ (5) on p. 119]{0962.60067} we
  have   
  \begin{align*}
    \E_0[\tau^2] & = 2 \int_0^\infty G(0,x) m(x) \int_0^\infty G(x,y) m(y) \, dy
    \, dx. \\ \intertext{Now using again \eqref{eq:def.M} and 
      \eqref{eq:g-star} as in the proof of Lemma~\ref{lem:exp.m.killing} we
      obtain}
    \E_0[\tau^2]  & \le 4 G^* \int_0^\infty G(0,x) m(x) e^{\mu x} \,
    dx 
= 8 G^*\pi \psi(0)
    \int_0^\infty Ai(\mu^2+x) \,dx . \\\intertext{Since $Ai$ is 
      decreasing we obtain}   
    \E_0[\tau^2] & \le 8 G^* \pi \psi(0) \int_0^\infty Ai(x) \, dx = \frac{8 G^*
    \pi}{3}\psi(0),   
  \end{align*}
  where the last equality follows from \eqref{eq:59}. 
\end{proof}

\begin{lemma}[Bound on the expected killing position starting from $x$]
  \label{lem:E_x<x+c} \leavevmode \\ 
  For any $x \ge 0$ 
  \begin{align}   
    \label{eq:41} 
    \mathbb E_x[Z_{\tau-}]\leq x + \phi(0) \psi(0) +\mu \E_0[\tau].    
  \end{align}   
\end{lemma}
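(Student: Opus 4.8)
The plan is to compute $\E_x[Z_{\tau-}]$ exactly from the Green function of Lemma~\ref{lem:green-krbm} and then read off the bound. By Remark~\ref{rem:exp-kill} the killing position $Z_{\tau-}$ has Lebesgue density $G(x,y)k(y)=G(x,y)\,y e^{-2\mu y}$, with $k$ as in \eqref{eq:speed-kill-KRBM}. Splitting the integral at $y=x$ and inserting the functions $\Phi,\Psi$ of \eqref{eq:tildephi}--\eqref{eq:tildepsi} (so that $G(x,y)e^{-2\mu y}=\phi(x)\Psi(y)$ for $y\le x$ and $=\psi(x)\Phi(y)$ for $y\ge x$), one gets
\begin{align*}
  \E_x[Z_{\tau-}] \;=\; \int_0^\infty y^2\,G(x,y)\,e^{-2\mu y}\,dy
  \;=\; \phi(x)\int_0^x y^2\Psi(y)\,dy \;+\; \psi(x)\int_x^\infty y^2\Phi(y)\,dy .
\end{align*}

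Next I would use the differential equation \eqref{eq:5}, solved by both $\Phi$ and $\Psi$, to write $y^2 u(y)=y\bigl(u''(y)+2\mu u'(y)\bigr)$ for $u\in\{\Phi,\Psi\}$, and integrate by parts twice on each interval. The boundary contributions at $+\infty$ vanish because $\Phi(y)=\pi e^{-\mu y}Ai(\mu^2+y)$ and $\Phi'(y)$ decay super-exponentially. Collecting the pieces: the coefficient of $x$ is $\phi(x)\bigl(\Psi'(x)+2\mu\Psi(x)\bigr)-\psi(x)\bigl(\Phi'(x)+2\mu\Phi(x)\bigr)=\bigl(\phi(x)\Psi'(x)-\psi(x)\Phi'(x)\bigr)+2\mu\bigl(\phi(x)\Psi(x)-\psi(x)\Phi(x)\bigr)=1$, using \eqref{eq:3} and the identity $\phi(x)\Psi(x)=e^{-2\mu x}\phi(x)\psi(x)=\psi(x)\Phi(x)$; the remaining non-integral terms $-\phi(x)\Psi(x)+\psi(x)\Phi(x)$ cancel for the same reason; the one surviving boundary term (from $y=0$) is $\phi(x)\Psi(0)=\phi(x)\psi(0)$; and the surviving integral terms combine into $-2\mu\bigl(\phi(x)\int_0^x\Psi+\psi(x)\int_x^\infty\Phi\bigr)=-\mu\,\E_x[\tau]$ by \eqref{eq:42}. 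Thus I expect to land on the clean identity
\begin{align*}
  \E_x[Z_{\tau-}] \;=\; x + \phi(x)\psi(0) - \mu\,\E_x[\tau].
\end{align*}

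The conclusion is then immediate: $\phi$ is strictly decreasing by Lemma~\ref{lem:prop-phi-psi}, so $\phi(x)\le\phi(0)$, and $\mu\E_x[\tau]\ge0$, whence $\E_x[Z_{\tau-}]\le x+\phi(0)\psi(0)$, which is in fact slightly stronger than the stated inequality (the extra $\mu\E_0[\tau]\ge0$ only weakens it). The only delicate point is the bookkeeping in the integration by parts; the cleanest way to handle the Wronskian normalisation is to avoid $\phi\psi'-\psi\phi'$ and use \eqref{eq:3} in the form $\phi\Psi'-\psi\Phi'=1$ together with $\phi\Psi=\psi\Phi$, while the decay of $Ai$ disposes of all boundary terms at infinity. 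As a cross-check, setting $x=0$ gives $\E_0[Z_{\tau-}]=\phi(0)\psi(0)-\mu\E_0[\tau]=\psi(0)\bigl(\Phi(0)-2\mu\int_0^\infty\Phi\bigr)$, which matches evaluating $\psi(0)\int_0^\infty y^2\Phi(y)\,dy$ directly via the same integration by parts. (A slightly weaker version can also be obtained probabilistically: from the Skorokhod decomposition $Z_t=x+B_t-\mu t+L_t$ of \eqref{eq:RBM}, optional stopping at $\tau$ yields $\E_x[Z_{\tau-}]=x-\mu\E_x[\tau]+\E_x[L_\tau]$; the strong Markov property at the first hitting time of $0$ gives $\E_x[L_\tau]\le\E_0[L_\tau]=\E_0[Z_{\tau-}]+\mu\E_0[\tau]$; and the computation above at $x=0$ bounds $\E_0[Z_{\tau-}]\le\phi(0)\psi(0)$ — this is where the $\mu\E_0[\tau]$ of the statement comes from.)
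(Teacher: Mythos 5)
Your proposal is correct and follows essentially the same route as the paper: it derives the exact identity $\E_x[Z_{\tau-}]=x+\phi(x)\psi(0)-\mu\,\E_x[\tau]$ from the killing density \eqref{eq:4}, the ODE \eqref{eq:5} for $\Phi,\Psi$, integration by parts, \eqref{eq:3}, $\phi\Psi=\psi\Phi$ and \eqref{eq:42}, and then concludes by monotonicity of $\phi$. Your observation that dropping $-\mu\E_x[\tau]\le 0$ already gives the slightly stronger bound $x+\phi(0)\psi(0)$ is accurate; the stated $+\mu\E_0[\tau]$ only loosens it.
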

\begin{proof}
  By \eqref{eq:4} and the definition of $\Phi$ and $\Psi$ in \eqref{eq:tildephi}
  respectively \eqref{eq:tildepsi} we have   
  \begin{align*}
    \mathbb E_x [Z_{\tau-}] & =  \int_0^\infty y^2 G(x,y)e^{-2\mu y} \,dy\\    
    &= \phi(x) \int_0^x y^2 \psi(y)  e^{-2\mu y} \,dy +  \psi(x) 
    \int_x^\infty  y^2 \phi(y)  e^{-2\mu y} \, dy \\ & = 
    \phi(x) \int_0^x y^2 \Psi(y)   \,dy +  \psi(x) \int_x^\infty  y^2 \Phi(y)
    \, dy.   
  \end{align*}
  Now using the fact that $\Phi$ and $\Psi$ satisfy \eqref{eq:5}, integration by
  parts, \eqref{eq:3} and the fact that $\Psi\phi = \psi\Phi$  we arrive at   
  \begin{align*}
    \mathbb E_x [Z_{\tau-}]  &= \phi(x) \int_0^x \left(y \Psi''(y)+ 2 \mu y\Psi'(y)\right) \, dy + 
    \psi(x) \int_x^\infty  \left(y \Phi''(y)+ 2 \mu y\Phi'(y)\right)\,
    dy \\  
    &= x \left[\phi(x)\Psi'(x)- \psi(x)\Phi'(x)\right]  \\  & \qquad \quad  +
    \phi(x) \left[-\Psi(x)+ \Psi(0) + 2 \mu x \Psi(x) - 2 \mu \int_0^x
      \Psi(y)\,dy\right]   
    \\ & \qquad \qquad + \psi(x) \left[\Phi(x) - 2 \mu x \Phi(x) - 2 \mu
    \int_x^\infty \Phi(y)\,dy\right] \\ 
    \\ &= x+ \phi(x) \Psi(0) - 2 \mu\left(\phi(x) \int_0^x 
      \Psi(y)\,dy + \psi(x) \int_x^\infty \Phi(y)\, dy \right)\\ 
    &= x+ \phi(x) \psi(0) - \mu \int_0^\infty G(x,y)m(y) dy \\ &= x+  \phi(x)
    \psi(0) - \mu \E_x[\tau].  
  \end{align*}
  Here the next to last equality follows from \eqref{eq:42} and from
  $\psi(0)=\Psi(0)$. Since $\phi$ is decreasing and $\E_x [\tau] \leq
  \E_0[\tau]$ the assertion \eqref{eq:41} follows.  
\end{proof}

\subsection{Invariant distribution at jump times} 
\label{sec:invdist}

In this subsection we consider the increments of the Brownian ratchet at the
jump times of the boundary process. We show that they constitute a Markov chain
with unique invariant distribution and compute the expected jump time and the
expected killing position under the invariant distribution. 
\begin{definition}[Markov chain at jump times] \label{eq:mkjt} 
  \leavevmode \\ 
  Let $(\mathcal X, \mathcal R)$ be $(\gamma,\mu)$-Brownian ratchet with 
  sequence of jump times of $\mathcal R$ given by $(\tau_n)_{n\ge 0}$. We define
  the Markov chain $(\mathcal Y,\mathcal W,\eta)
  \coloneqq (Y_n,W_n,\eta_n)_{n=1,2,\dots} $ of increments at jump times by     
  \begin{align}
    \label{eq:mc.inc}
    Y_n= X_{\tau_n}-R_{\tau_n}, && W_n = R_{\tau_n}-R_{\tau_{n-1}} && \text{and} &&
    \eta_n = \tau_n-\tau_{n-1}. 
  \end{align}
\end{definition}
\noindent 
Since for any $k$ the law of $(Y_n,W_n,\eta_n)_{n=k+1,k+2,\dots}$ depends on
$(Y_n,W_n,\eta_n)_{n = 1, \dots, k}$ only through $Y_k$, $(\mathcal Y,\mathcal
W,\eta)$ is indeed a Markov chain.

\begin{proposition} 
  \label{prop:ex-uniq-inv}  
  There exists a unique invariant distribution of the Markov chain $(\mathcal
  Y, \mathcal W, \eta)$. 
\end{proposition}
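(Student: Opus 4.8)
The proposition asks us to establish existence and uniqueness of an invariant distribution for the Markov chain $(\mathcal Y,\mathcal W,\eta)$ of increments at jump times. Since the transition law of this chain depends on the past only through $Y_k$, it suffices to analyse the marginal chain $(Y_n)_{n\ge 1}$ on $[0,\infty)$: once $Y_n$ has a unique invariant law, the joint law of $(Y_n,W_n,\eta_n)$ is determined by running one more step of the graphical construction, so the invariant distribution of the full chain is obtained by pushing forward the invariant law of $Y$ through one transition. Thus I would reduce everything to the chain $(Y_n)$, where $Y_n = X_{\tau_n} - R_{\tau_n}$ is the overshoot of the ratchet over its reflection boundary at the $n$-th click.

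\textbf{Existence.} I would first describe one step of the chain: starting from $Y_{n-1} = x$, the process between $\tau_{n-1}$ and $\tau_n$ is exactly the killed reflecting Brownian motion with drift $Z^K$ of Definition~\ref{rem:killed-rbm} started in $x$, and at the killing time the value of $X$ is $Z^K_{\tau-}$, whereupon the new reflection boundary is chosen uniformly in $[R_{\tau_{n-1}}, Z^K_{\tau-}]$; hence $Y_n = U \cdot Z^K_{\tau-}$ (measuring from the old boundary) with $U\sim\mathrm{Unif}[0,1]$ independent — more precisely $Y_n$ is the new overshoot $X_{\tau_n}-R_{\tau_n} = (1-U)(X_{\tau_n}-R_{\tau_{n-1}})$ as in the proof of Lemma~\ref{lem:gr-constr-BR}. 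Two ingredients give tightness of the one-step laws uniformly in the starting point. First, Lemma~\ref{lem:E_x<x+c} gives $\E_x[Z_{\tau-}] \le x + c_0$ with $c_0 = \phi(0)\psi(0) + \mu\E_0[\tau]$, a Lyapunov/drift inequality: since $Y_n = (1-U) X_{\tau_n} \le X_{\tau_n} - R_{\tau_{n-1}}$ and $\E[(1-U)] = \tfrac12$, we get $\E_x[Y_n] \le \tfrac12(x + c_0)$, a geometric drift toward a bounded set. Second, the coupling of Lemma~\ref{lem:coupling}: two ratchets started from different points couple in finite time (with exponential moments), so in particular for the single chain $Y$ started from $x=0$, Krylov–Bogolyubov applied to the Cesàro averages $\tfrac1N\sum_{n=1}^N \Pr_0(Y_n \in \cdot)$ — which are tight by the drift inequality — produces an invariant probability measure $\pi_Y$ on $[0,\infty)$ for the (Feller, by the continuity of the construction) chain $Y$.

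\textbf{Uniqueness.} This is where I expect the main work, and the coupling of Lemma~\ref{lem:coupling} is the key tool. Given two invariant laws $\pi_Y, \pi_Y'$, realise the chain started from $\pi_Y$ and from $\pi_Y'$ simultaneously inside the \emph{same} graphical construction (same $B^\mu$, same Poisson process $N^\gamma$), as in \eqref{eq:coupl}. By \eqref{eq:def.Tcoupl}–\eqref{eq:coupl.cons} the two running-maximum processes $S,\widetilde S$ merge at the almost surely finite time $T_{\mathrm{coupl}}$ (finite by Lemma~\ref{lem:coupling}, whose bound requires integrability of the initial overshoot — guaranteed because an invariant law must, by the drift inequality, have finite mean, hence finite exponential-type moments can be bootstrapped, or one simply first-step-truncates), after which the two ratchets have identical spatial increments and identical jump times. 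Consequently the two overshoot chains agree from some (random, a.s.\ finite) index on: $Y_{n+k} = \widetilde Y_{\widetilde n + k}$ for all $k\ge 0$. Stationarity of both chains then forces $\pi_Y = \pi_Y'$: for any bounded continuous $f$, $\int f\,d\pi_Y = \lim_{N}\tfrac1N\sum_{k=1}^N \E[f(Y_{n+k})] = \lim_N \tfrac1N\sum_{k=1}^N \E[f(\widetilde Y_{\widetilde n+k})] = \int f\, d\pi_Y'$. Finally, push $\pi_Y$ forward through one transition step of the graphical construction to obtain the unique invariant distribution of the full chain $(\mathcal Y,\mathcal W,\eta)$; uniqueness is inherited from uniqueness of $\pi_Y$ since any invariant law of the triple has $Y$-marginal invariant for $Y$. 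The delicate point to get right is ensuring the coupling lemma applies to chains started from the (a priori only integrable) invariant law — handled by noting $\E_{\pi_Y}[Y_0] < \infty$ and that $T_{\mathrm{coupl}}$ is dominated by the hitting time $H_{Y_0}$ of level $Y_0$ by $B^\mu$, which is a.s.\ finite for any finite $Y_0$, so $T_{\mathrm{coupl}} < \infty$ a.s.\ without needing the full exponential bound.
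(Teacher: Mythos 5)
Your proposal is correct and follows essentially the same route as the paper: existence via the drift bound of Lemma~\ref{lem:E_x<x+c} giving uniformly bounded moments and hence tightness of the Ces\`aro averages, and uniqueness via the coupling of Lemma~\ref{lem:coupling}. The only organizational difference is that you work with the $Y$-marginal and push forward one transition (so you never need the $\eta$-moment bound), whereas the paper treats the triple directly and additionally invokes Lemma~\ref{lem:2nd.mom.kill.time}; your extra care about applying the coupling to stationary (merely integrable) initial conditions is a sensible refinement of the same argument.
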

\begin{proof}
  While uniqueness of an invariant distribution is guaranteed by the coupling
  result in Lemma~\ref{lem:coupling}, to prove existence we need to show that
  the moments of $(Y_n,W_n,\eta_n)$ are bounded for all $n$. This implies then
  tightness of the sequence and also tightness of the Ces\`aro averages of the
  laws. Weak limits of subsequences of the latter are invariant distributions of
  the Markov chain. Boundedness of the moments of $\eta_n$ follows from
  Lemma~\ref{lem:2nd.mom.kill.time} and that of the moments of $Y_n$ (and $W_n$
  since it has the same distribution as $Y_n$) follows inductively from
  Lemma~\ref{lem:E_x<x+c}. For details we refer to the proof of Proposition~5.6
  in \citep{DepperschmidtPfaffelhuber:2010}.
\end{proof}

\begin{proposition} 
  \label{prop:exp-under-pi}
  Let $\nu$ be the invariant distribution of $Y_1,Y_2,\dots$ and let $\E_\nu$
  denote the expectation with respect to that distribution. Then there is a
  constant $K \in (0,\infty)$ so that
  \begin{align}
    \label{eq:22} 
    \E_\nu[Y_1] & = -\frac1K(\mu Ai(\mu^2)+Ai'(\mu^2)),\\  
    \intertext{and} \label{eq:36} 
    \E_\nu[\eta_1] & = \frac{ 2Ai(\mu^2)}K.     
  \end{align}
\end{proposition}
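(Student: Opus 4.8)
The plan is to compute the two expectations by exploiting the defining property of the invariant distribution $\nu$ together with the explicit formulas from Section~\ref{sec:green}. First I would set up the one-step relation: if $Y_1$ has law $\nu$, then conditionally on $Y_1 = x$ the ratchet between $\tau_0$ and $\tau_1$ behaves as the killed reflecting Brownian motion with drift started in $x$, with killing time $\eta_1 \stackrel{d}{=}\tau$ and killing position $X_{\tau_1-}=X_{\tau_1}\stackrel{d}{=}Z^K_{\tau-}$. At the jump the new reflection boundary $R_{\tau_1}$ is uniform on $[R_{\tau_0},X_{\tau_1}]$, so $Y_2 = X_{\tau_1}-R_{\tau_1} = U\cdot X_{\tau_1}$ for an independent $U\sim U([0,1])$ (using $R_{\tau_0}=0$ in this first step, but the distributional identity is translation invariant so it holds at every step). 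Hence $\E_\nu[Y_1] = \E_\nu[U\,Z^K_{\tau-}] = \tfrac12\,\E_\nu[Z^K_{\tau-}]$, and $\E_\nu[\eta_1] = \E_\nu[\tau]$, where on the right $x$ is drawn from $\nu$.

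Next I would use Lemma~\ref{lem:E_x<x+c}, which gives the pointwise identity $\E_x[Z_{\tau-}] = x + \phi(x)\psi(0) - \mu\,\E_x[\tau]$ — note that the proof of that lemma actually establishes the exact equality, not just the stated inequality. Integrating against $\nu$ and using the invariance $\E_\nu[Z^K_{\tau-}]=2\E_\nu[Y_1]=:2m$ (here I write $m = \E_\nu[Y_1]$), together with the abbreviations $p := \E_\nu[\phi(Y_1)]$ and $q := \E_\nu[\tau] = \E_\nu[\eta_1]$, yields the linear relation $2m = m + \psi(0)\,p - \mu q$, i.e. $m = \psi(0)\,p - \mu q$. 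To close the system I need a second relation. For that I would use the Green-function expression \eqref{eq:42} for $\E_x[\tau]$ pointwise and integrate, but more efficiently I would note that $\E_\nu[\phi(Y_1)]$ can be evaluated directly: since $\phi(x) = \pi e^{\mu x}Ai(\mu^2+x)$ and $\phi$ solves $A^{\mu,1/2}\phi=0$, one has $\tfrac12\phi'' - \mu\phi' = \tfrac12 x\phi$, which combined with the construction of $\nu$ as the stationary law of the chain expresses $\E_\nu[\phi(Y_1)]$ in terms of the Wronskian normalisation $w(\psi,\phi)=1$ and the boundary values $\phi(0),\psi(0)$. Concretely I expect the identity to reduce, after an integration-by-parts using \eqref{eq:3} and \eqref{eq:5}, to something of the form $p = \phi(0) - \mu\,(\text{something})\cdot q$ or a clean multiple of $Ai(\mu^2)$; substituting the explicit values $\phi(0)=\pi Ai(\mu^2)$, $\psi(0) = C\,Ai(\mu^2)+Bi(\mu^2)$ and the definition of $C$ from \eqref{eq:Cmu} should then collapse everything onto the single constant $K$.

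Finally, once the two linear equations in $(m,q)$ are in hand, solving them gives $m = -\tfrac1K(\mu Ai(\mu^2)+Ai'(\mu^2))$ and $q = 2Ai(\mu^2)/K$ for an appropriate $K\in(0,\infty)$; the factor $\mu Ai(\mu^2)+Ai'(\mu^2)$ is exactly the quantity $g(\mu,0)$ shown to be strictly negative in Lemma~\ref{lem:prop-phi-psi}, which is consistent with $\E_\nu[Y_1]>0$, and $K$ will turn out to be (a positive multiple of) $\psi(0)\,g(\mu,0)$ up to sign, hence genuinely in $(0,\infty)$. The main obstacle I anticipate is not any single computation but rather correctly tracking the self-consistency of $\nu$: one must be careful that $\E_\nu[\phi(Y_1)]$, $\E_\nu[\tau]$ and $\E_\nu[Z^K_{\tau-}]$ are all evaluated under the same stationary $x$-distribution and that the identity from Lemma~\ref{lem:E_x<x+c} is used as an equality. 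The integrability needed to justify integrating the pointwise identities against $\nu$ is supplied by Lemma~\ref{lem:exp.m.killing}, Lemma~\ref{lem:2nd.mom.kill.time} and the moment bounds in the proof of Proposition~\ref{prop:ex-uniq-inv}. As in Proposition~5.7 of \citep{DepperschmidtPfaffelhuber:2010}, the algebra simplifying the Airy combinations via \eqref{eq:Cmu} is routine and I would not reproduce it in detail here.
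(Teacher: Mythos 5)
There is a genuine gap at the heart of your plan. Your first step is fine: integrating the exact identity $\E_x[Z^K_{\tau-}]=x+\phi(x)\psi(0)-\mu\,\E_x[\tau]$ (established inside the proof of Lemma~\ref{lem:E_x<x+c}) against $\nu$ and using $Y\stackrel{d}{=}U\,Z^{K,Y}_{\tau-}$ does give the correct linear relation $m=\psi(0)\,p-\mu q$ with $m=\E_\nu[Y_1]$, $p=\E_\nu[\phi(Y_1)]$, $q=\E_\nu[\eta_1]$. But this is one equation in three unknowns, and the ``second relation'' that is supposed to close the system is never derived -- you only state that you \emph{expect} $\E_\nu[\phi(Y_1)]$ to reduce to boundary data via the Wronskian. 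It does not: under the true invariant law one has $f_\nu(x)\propto e^{-\mu x}Ai(\mu^2+x)$, hence $p\propto\int_0^\infty Ai(\mu^2+x)^2\,dx$, a quantity that cannot be read off from $\phi(0)$, $\psi(0)$ and $w(\psi,\phi)=1$; knowing it amounts to knowing the invariant density itself. Moreover, the form you anticipate, $p=\phi(0)-\mu(\cdot)\,q$, is inhomogeneous, and together with $m=\psi(0)p-\mu q$ it would still leave the ratio $m/q$ undetermined -- yet the proposition is exactly equivalent to pinning down that ratio (given $0<q<\infty$ one may simply set $K\coloneqq 2Ai(\mu^2)/q$, and the same $K$ must serve in both formulas, which is what the speed computation later uses). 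A further technical obstacle to the shortcut you sketch: $\phi$ does not satisfy the Neumann condition $\phi'(0+)=0$, so harmonicity of $\phi$ for $A^{\mu,1/2}$ does not make $\phi$ of the reflected killed process a martingale; a local-time term at $0$ appears, and your argument does not confront it. Finally, the parenthetical identification of $K$ as a multiple of $\psi(0)\,g(\mu,0)$ is not correct (in the paper $K$ is, up to the factor $\pi$, the integral $\int_0^\infty e^{-\mu x}Ai(\mu^2+x)\,dx$), though this is immaterial to the statement as such.

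The paper closes precisely this gap by a different and more substantive route: from the fixed-point identity $Y\stackrel{d}{=}U\,Z^{K,Y}_{\tau-}$ it derives a recurrence equation for the invariant density $f_\nu$, differentiates it and uses \eqref{eq:5}, \eqref{eq:2} and \eqref{eq:3} to obtain the ODE $f''_\nu=-2\mu f'_\nu+zf_\nu$ with $f''_\nu(0)=-2\mu f'_\nu(0)$, identifies $f_\nu$ as the (normalised) decreasing solution $\Phi(z)=\pi e^{-\mu z}Ai(\mu^2+z)$, and only then computes $\E_\nu[Y_1]=-f'_\nu(0)-2\mu f_\nu(0)$ and $\E_\nu[\eta_1]=2f_\nu(0)$, which yields both formulas with the common constant $K$. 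If you want to salvage your approach you would have to supply an argument of comparable strength identifying $f_\nu$ (or at least the homogeneous relation between $p$ and $q$); as written, the decisive step is missing.
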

\begin{proof}
  Let $f_\nu$ denote the density of $\nu$ with respect to Lebesgue measure.
  Let $Z^{K,Y}:=(Z^{K,Y}_t)_{t \ge 0}$ be killed reflecting Brownian motion with
  drift starting in random value $Y \ge 0$ with increments independent of $Y$.
  Furthermore let $U$ be uniformly distributed on $(0,1)$. Invariance of $\nu$
  implies that for killing time $\tau$ of $Z^{K,Y}$ we have 
  \begin{align*}
    Y \stackrel{d}{=} U\cdot Z^{K,Y}_{\tau-}. 
  \end{align*}
  As in \citep[][Section~5.3]{DepperschmidtPfaffelhuber:2010} from that one can
  obtain the following recurrence equation 
  \begin{align*}
     f_\nu(z) & =  \int_0^\infty f_\nu(x) \int_z^\infty e^{-2\mu u} G(x,u)\, du 
  \end{align*}
  and then compute 
  \begin{align}
    \label{eq:7}
    f'_\nu(z) & = - \Psi(z) \int_z^\infty f_\nu(x) \phi(x) \, dx - \Phi (z)
    \int_0^z  f_\nu(x) \psi (x) \, dx, \\
    \label{eq:8}
    f''_\nu(z) & = - \Psi'(z) \int_z^\infty f_\nu(x) \phi(x) \, dx -  \Phi'(z) \int_0^z f_\nu(x) \psi(x)\,dx, \\
    \intertext{and}
    \label{eq:20}
    \begin{split}
      f'''_\nu(z) & = - \Psi''(z) \int_z^\infty f_\nu(x) \phi(x) \, dx +
      \psi'(z) f_\nu(z) \phi(z) \\ & \quad \qquad - \Phi(z) \int_0^z
      f_\nu(x)\psi(x) \, dx - \Phi(z) f_\nu(z) \psi(z) \\ & = - 2\mu
      f_\nu''(z) + z f'_\nu(z) + f_\nu(z),
    \end{split}
  \end{align}
  where for the last equality we used \eqref{eq:5}, equations for $f''_\nu$, 
  $f_\nu'$ and \eqref{eq:3}. Thus,   
  \begin{align*}
    f'''_\nu(z) =  - 2\mu f_\nu''(z) + (z f_\nu(z))'. 
  \end{align*}
  Integrating we obtain
  \begin{align} \label{eq:9}
    f''_\nu(z) =  - 2\mu f_\nu'(z) + z f_\nu(z).  
  \end{align}
  The integration constant is zero because from \eqref{eq:7}, \eqref{eq:8} and
  \eqref{eq:2} we see that $f''_\nu(0) = - 2\mu f'_\nu(0)$. By \eqref{eq:7} the
  density $f_\nu$ must be strictly decreasing.  Up to a constant factor the
  positive decreasing solution of \eqref{eq:9} is $ \Phi$ and
  it follows that 
  \begin{align}  
    \label{eq:inv-dens}
    f_\nu(z) = \frac1{K}  \Phi(z) \quad \text{with} \quad
    K=\int_0^\infty  \Phi(x)\, dx.
  \end{align} 

  From \eqref{eq:9} it follows 
  \begin{align*}
    \E_\nu[Y_1]= \int_0^\infty x f_\nu(x)\,dx & = \int_0^\infty (f''_\nu(x) + 2\mu 
    f'_\nu(x))\, dx \\ & = -f'_\nu(0)-2\mu f_\nu(0) = \frac\pi K(\mu 
    Ai(\mu^2)-Ai'(\mu^2)-2\mu Ai(\mu^2)) \\ 
    & = - \frac \pi K(\mu Ai(\mu^2) + Ai'(\mu^2)),   
  \end{align*}
  (replace here $K$ by $K\pi$ to get \eqref{eq:22}) and 
  \begin{align*}
    \E_\nu[\eta_1] & = 2\int_0^\infty f_\nu(x) \int_0^\infty e^{-2 \mu y}
    G(x,y)\, dy \, dx \\
    & = 2 \int_0^\infty f_\nu(x) \Bigl( \phi(x) \int_0^x \Psi(y)\,
    dy + \psi(x) \int_x^\infty  \Phi(y) \, dy\Bigl) \, dx.  \\
    \intertext{Now using Fubini's Theorem and then \eqref{eq:7} we have }
    \E_\nu[\eta_1] & = 2 \int_0^\infty \Bigl(\Psi(y) \int_y^\infty
    f_\nu(x)\phi(x) \, dx + \Phi(y) \int_0^y f_\nu(x)
    \psi(x) \, dx\Bigl) \, dy \\
    & = - 2 \int_0^\infty f'_\nu(x)\, dx = 2 f_\nu(0)= 2\frac{\Phi(0) }K =
    2\frac{\pi Ai(\mu^2)}{K}.   
  \end{align*}
  Again replacing $K$ by $K\pi$ we get to \eqref{eq:36}.   
\end{proof}

\subsection{Regeneration structure}  
\label{sec:reg-brr}

In this subsection we define a regeneration structure of the
$(\gamma,\mu)$-Brownian ratchet and show that the second moments of the
regeneration times and of the corresponding spatial increments are finite.

\begin{definition}[Brownian ratchet as a cumulative process] 
  \label{def:cumu1}   \leavevmode \\ 
  Given a Brownian ratchet $(\mathcal X, \mathcal R)=(X_t,R_t)_{t\ge 0}$ with
  $(X_0,R_0)=(x,0)$, $x \ge 0$ we define a sequence of regeneration times as
  follows 
  \begin{align} 
    \label{eq:15}
    \rho_0 & \coloneqq\inf\{t\ge 0: X_t=R_t\}, \qquad  \wt\rho_0 \coloneqq \inf\{t \ge
    \rho_0: R_{t-}\ne R_t\},
    \\ \intertext{where $R_{t-}=\lim_{s \to t,s<t} R_s$,
      and for $n \ge 1$ we set }   
    \label{eq:16} 
    \rho_{n} & \coloneqq\inf\{t \ge \wt\rho_{n-1}n: X_t=R_t\}, \qquad
    \wt\rho_{n} \coloneqq \inf\{t \ge \rho_{n}: R_{t-}\ne R_t\}.    
  \end{align}
  Then  $\rho_0 < \wt\rho_0 < \rho_1 < \wt\rho_1< \dots$ almost surely and
  we have $\rho_0=0$ in the case $x=0$. Furthermore the sequence 
  $(X_{\rho_{n+1}}-X_{\rho_{n}},\rho_{n+1}-\rho_{n})_{n\ge 0}$ is
  iid. We define  
  \begin{align} \label{eq:12} 
    M_t  \coloneqq  \min\{n : \rho_n >t\}, && S_n  \coloneqq  \sum_{i=1}^n
    (X_{\rho_i}-X_{\rho_{i-1}}) && \text{and} &&  A_t  \coloneqq 
    X_{\rho_0}+X_t-X_{\rho_{M_t}}.  
  \end{align}
  Then we have
  \begin{align}
    \label{eq:17}
    X_t = S_{M_t}+A_t,  
  \end{align}
 that is, $X_t$ is a type A cumulative process $S_{M_t}$ with 
 remainder $A_t$ \citep[see][]{Roginsky:1994}.  
\end{definition}

It is well known (see e.g. \citep{Roginsky:1994,Smith:1955} cf. also
Remark~6.1 in \citep{DepperschmidtPfaffelhuber:2010}) that to prove the 
law of large numbers and the central limit theorem for $(S_{M_t})_{t \ge 0}$  
we need to show that the second moments of $\rho_1-\rho_0$ and
$X_{\rho_1}-X_{\rho_0}$ are bounded (this is done in
Propositions~\ref{prop:rho-bounds} and \ref{prop:X-rho-bounds}). Then, to carry
the result over to $(X_t)_{t \ge 0}$ we  have to prove that the remainder
$(A_t)_{t\ge 0}$ is asymptotically negligible (this is done in
Proposition~\ref{prop:AsympA}). 

\begin{proposition} 
  \label{prop:rho-bounds}
  There exists a positive constant $R^*$ such that for all $x \ge 0$  
  \begin{align}
    \label{eq:6} 
    \mathbb E_x[\rho_0^2] \le R^* \; \text{ and } \; \mathbb E_x [(\rho_1-\rho_0)^2] \le R^*. 
  \end{align}
\end{proposition}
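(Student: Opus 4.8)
The plan is to work with the scalar \emph{gap process} $G_t:=X_t-R_t$, $t\ge0$. Between the jumps of $\RR$ the process $\XX$ is a reflecting Brownian motion with drift $-\mu$ reflected at the (momentarily constant) level $R_t$, so $G$ evolves as $RBM(-\mu)$ on $[0,\infty)$; the boundary jumps at rate $\gamma G_t=\tfrac12 G_t$ (recall we have fixed $\gamma=\tfrac12$), and at such a jump $G$ is multiplied by an independent $U\sim U([0,1])$. Hence $(G_t)_{t\ge0}$ is a time-homogeneous Markov process and $\rho_0=\inf\{t\ge0:G_t=0\}$ is its first hitting time of $0$; note that on $[0,\rho_0]$ the local time of $G$ at $0$ vanishes, so there $G$ is just drifted Brownian motion subject to the downward jumps. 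Its generator before $\rho_0$ acts on bounded $f\in C^2([0,\infty))$ by
\[
  \mathcal A f(g)=\tfrac12 f''(g)-\mu f'(g)+\tfrac12\int_0^g\bigl(f(u)-f(g)\bigr)\,du .
\]

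First I would reduce both assertions to the single statement $c_\ast:=\sup_{x\ge0}\E_x[\rho_0]<\infty$. Granting this, the pathwise identity $\rho_0^2=2\int_0^{\rho_0}(\rho_0-s)\,ds$, Tonelli's theorem and the Markov property of $G$ at deterministic times give
\[
  \E_x[\rho_0^2]=2\int_0^\infty\E_x\!\bigl[\ind{s<\rho_0}\E_{G_s}[\rho_0]\bigr]\,ds
  \le 2c_\ast\int_0^\infty\Pr_x(s<\rho_0)\,ds=2c_\ast\,\E_x[\rho_0]\le 2c_\ast^2
\]
for every $x\ge0$, which is the first bound. For the second, observe that at time $\rho_0$ the gap equals $0$ independently of the starting point, so by the strong Markov property $\rho_1-\rho_0$ has the law of $\rho_1$ started from $G_0=0$. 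Started from $0$ we have $\rho_0=0$; the next jump of $\RR$ occurs at a time $\wt\rho_0$ which, by construction and Definition~\ref{rem:killed-rbm}, is exactly the killing time $\tau$ of $RBM^0(-\mu)$ killed at rate $\tfrac12 Z$, and then $G_{\wt\rho_0}=U\,Z^K_{\tau-}$ with $U\sim U([0,1])$. Applying the strong Markov property once more at $\wt\rho_0$ and using $(a+b)^2\le2a^2+2b^2$,
\[
  \E_x[(\rho_1-\rho_0)^2]=\E_0[\rho_1^2]\le 2\,\E_0[\tau^2]+2\,\E_0\!\bigl[\E_{G_{\wt\rho_0}}[\rho_0^2]\bigr]\le 2\,C^{\tn{kill}}+4c_\ast^2 ,
\]
by Lemma~\ref{lem:2nd.mom.kill.time} and the uniform bound $\E_g[\rho_0^2]\le 2c_\ast^2$ just derived. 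Taking $R^\ast:=2C^{\tn{kill}}+4c_\ast^2$ then finishes the proof.

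It remains to establish $c_\ast<\infty$, and here the point to exploit is that the jump rate $\tfrac12(X_t-R_t)$ is \emph{large} when the gap is large: each jump shrinks the gap by a uniform factor, and for large $g$ jumps arrive on the time scale $1/g$, so the gap is pulled down to $O(1)$ within time $O(1)$ uniformly in the starting value. I would encode this with an exponential Lyapunov function: fix $\beta>0$ (e.g.\ $\beta=1$) and put $f(g):=1-e^{-\beta g}$, which is bounded, smooth, nonnegative, and satisfies $f(0)=0$. Using $\int_0^g f(u)\,du-g f(g)=-\beta^{-1}\bigl(1-(1+\beta g)e^{-\beta g}\bigr)$ one computes
\[
  \mathcal A f(g)=-\frac1{2\beta}+e^{-\beta g}\Bigl(\frac1{2\beta}-\frac{\beta^2}{2}-\mu\beta+\frac g2\Bigr),
\]
and maximising the last term over $g\ge0$ yields $\mathcal A f(g)\le-\tfrac1{2\beta}\bigl(1-e^{-\beta^3-2\mu\beta^2}\bigr)=:-c$ with $c=c(\mu,\beta)>0$, for all $g>0$. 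Consequently $f(G_{t\wedge\rho_0})+c\,(t\wedge\rho_0)$ is a supermartingale under $\Pr_x$ (the stochastic integral against Brownian motion is a true martingale since $f'$ is bounded, the compensated jump part is a martingale since $G$ has only finitely many jumps on bounded intervals, and no local-time term enters before $\rho_0$); optional stopping and monotone convergence give $c\,\E_x[\rho_0]\le f(x)\le1$, hence $\E_x[\rho_0]\le 1/c$ for all $x\ge0$, so $c_\ast\le1/c$.

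The main obstacle is this last step. The crude domination $\rho_0\le H_x:=\inf\{t>0:B_t^\mu=x\}$ (as in the proof of Lemma~\ref{lem:coupling}) only gives $\E_x[\rho_0^2]<\infty$ with a bound growing in $x$, and for $\mu=0$ it gives nothing at all, so the uniformity in $x$ genuinely comes from the jump mechanism. The real work is therefore in producing a bounded Lyapunov function $f$ whose generator $\mathcal A f$ stays bounded away from $0$ on \emph{all} of $(0,\infty)$, not merely for large $g$; once one settles for $\mathcal A f\le-c$ for some possibly small $c$ rather than a normalised $-1$, the explicit choice $f=1-e^{-\beta g}$ works, but checking the inequality on the whole half-line is the computational heart of the argument.
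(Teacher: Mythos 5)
Your proof is correct, and the second half (writing $\rho_1-\rho_0=(\wt\rho_0-\rho_0)+(\rho_1-\wt\rho_0)$, bounding the first increment by Lemma~\ref{lem:2nd.mom.kill.time} and the second by the uniform bound on $\E_x[\rho_0^2]$) coincides with the paper's argument. The genuine difference is in how you obtain the uniform-in-$x$ control of the hitting time of zero by the gap $G_t=X_t-R_t$. The paper uses a two-stage stochastic domination: as long as $G_t>1$ the boundary jumps to within distance $1$ of $X_t$ at rate $\gamma\cdot 1=\tfrac12$, so the time until $G\le 1$ is dominated by an $\mathrm{Exp}(1/2)$ variable; after that, since jumps only shrink the gap, the remaining time is dominated by $H_1$, the hitting time of $0$ by Brownian motion with drift $-\mu$ started at $1$, which has exponential moments for $\mu>0$. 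This gives directly that $\rho_0$ is stochastically dominated, uniformly in $x$, by a sum of two variables with exponential moments. You instead run a Foster--Lyapunov argument for the Markov gap process with $f(g)=1-e^{-\beta g}$: your generator computation and the maximisation giving $\mathcal A f\le-\tfrac1{2\beta}\bigl(1-e^{-\beta^3-2\mu\beta^2}\bigr)$ are correct (the maximiser is $g^*=\beta^2+2\mu\beta$), and the supermartingale/optional-stopping step yields $\sup_x\E_x[\rho_0]\le 1/c$, which you then bootstrap to second moments via $\E_x[\rho_0^2]\le 2c_*\E_x[\rho_0]$ -- also correct, and in fact iterable to all moments. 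What each route buys: the paper's domination is shorter and immediately gives exponential moments, but it genuinely uses $\mu>0$ (for $\mu=0$, $H_1$ has infinite mean), which is harmless there since $\mu=0$ is covered by the earlier paper; your Lyapunov bound needs only $\gamma>0$ and so covers $\mu\ge 0$ uniformly, at the price of the generator computation and of having to justify the martingale properties (your justification of the compensated-jump martingale should really invoke integrability of the compensator, e.g.\ via $G_s\le x+\sup_{u\le s}B_u$ before $\rho_0$, rather than just finiteness of the number of jumps, but this is routine). Your closing remark that the ``crude'' domination by $H_x$ fails to be uniform is true but does not apply to the paper's actual argument, which dominates uniformly by $\mathrm{Exp}(1/2)+H_1$.
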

\begin{proof} 
  In the case $x=0$ we have $\rho_0=0$. Consider the case $x>0$. Let $H_1$ be 
  the hitting time of $0$ of the Brownian motion with drift $-\mu$ started in
  $1$. This hitting time has exponential moments (we used this fact already in
  the proof of Lemma~\ref{lem:coupling}). Furthermore set $T=\inf\{t \ge 0:
  X_t -R_t \le 1\}$ and note that $T \le E_{1/2}$, where $E_{1/2}$ is
  independent exponential random variable with rate $1/2$, because as long as
  $X_t -R_t > 1$ the rate at which the reflection boundary jumps into the
  interval $[X_t-1,X_t]$ (and therefore $T$ occurs) is $1/2$. But $T$ also
  occurs if $X_t$ hits the interval $[R_t,R_{t}+1]$. It follows that for any
  initial positions $x \ge 0$, $\rho_0$ is bounded stochastically by the sum
  of independent random variables $E_{1/2}$ and $H_1$, both having exponential
  moments and not depending on $x$. Thus, $\E_x[\rho_0^2]$ is bounded by a
  constant not depending on $x$.

  We write $\rho_1-\rho_0 =(\rho_1-\wt\rho_0)+(\wt\rho_0-\rho_0)$ and argue that
  each of the terms in the brackets has bounded second moments. Note that
  $\wt\rho_0-\rho_0$ is the first jump time of the reflection boundary after
  $\rho_0$. Thus, the finiteness of its second moment follows from
  Lemma~\ref{lem:2nd.mom.kill.time} and that the bound there, also does not
  depend on $x$. The finiteness of the second moment of $(\rho_1- \wt\rho_0)$
  follows by the same argument as the finiteness of the second moment of
  $\rho_0$.
\end{proof}
\begin{proposition} 
  \label{prop:X-rho-bounds}
  There exists a positive constant $R^{**} < \infty$ so that for any $x \ge 0$   
  \begin{align}
    \label{eq:66} 
    \E_x[X_{\rho_0}^2] \le R^{**} \; \text{ and } \; \E_x
    [(X_{\rho_1}-X_{\rho_0})^2]  \le R^{**}.   
  \end{align}
\end{proposition}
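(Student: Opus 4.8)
The plan is to bound the two quantities separately, mirroring the structure of the proof of Proposition~\ref{prop:rho-bounds}. First I would treat $\E_x[X_{\rho_0}^2]$. Recall $\rho_0 = \inf\{t \ge 0 : X_t = R_t\}$ is the first time the diffusion part meets the (not yet moved) reflection boundary; up to time $\wt\rho_0 - \rho_0$ the boundary stays at $0$, so in particular $R_{\rho_0} = 0$ and hence $X_{\rho_0}$ equals the value of the reflected Brownian motion with drift at time $\rho_0$. Since $\rho_0$ is the hitting time of $0$, we actually have $X_{\rho_0} = 0$ when $x = 0$; for $x > 0$, note that before $\rho_0$ the boundary has not jumped, so $X$ is literally $RBM^x(-\mu)$ run until it first hits $0$, whence $X_{\rho_0} = 0$ as well. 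Thus $\E_x[X_{\rho_0}^2] = 0$, and any $R^{**} > 0$ works for that part. (If one prefers to allow that the boundary may jump before $X$ hits it, one replaces this by the observation that $X_{\rho_0} \le \sup_{t \le \rho_0} X_t$ and controls the latter via the graphical construction and the exponential moments of $\rho_0$; but in the present setup $\rho_0$ is defined exactly so that $X_{\rho_0} = R_{\rho_0} = 0$ starting from $(x,0)$.)

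Next I would bound $\E_x[(X_{\rho_1} - X_{\rho_0})^2]$. Since $X_{\rho_0} = 0 = R_{\rho_0}$, this is $\E_0[X_{\rho_1}^2]$, so it suffices to bound $\E_0[X_{\rho_1}^2]$ by a universal constant. Decompose the interval $[\rho_0, \rho_1]$ at the jump times of the reflection boundary. Between $\rho_0$ and the first jump $\wt\rho_0$, the process $X$ is a killed reflecting Brownian motion with drift started at $0$, and $X_{\wt\rho_0}= X_{\wt\rho_0 -}= Z^K_{\tau-}$ in the notation of Definition~\ref{rem:killed-rbm}; by Lemma~\ref{lem:exp.m.killing} this has exponential moments, in particular a bounded second moment $\E_0[(X_{\wt\rho_0})^2] \le c_1$ for a constant $c_1$ not depending on anything. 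After $\wt\rho_0$ we wait for $X$ to return to its current reflection boundary; during this (random) time interval $X$ evolves as a reflecting Brownian motion with drift killed when it hits the moving boundary, and the boundary itself only increases, so $X_{\rho_1} \le \sup_{\wt\rho_0 \le t \le \rho_1} X_t$. The increment $\sup_{\wt\rho_0 \le t \le \rho_1} X_t - X_{\wt\rho_0}$ is stochastically dominated by the running maximum over $[0, H_1]$ of a Brownian motion with drift $-\mu$, where $H_1$ is the hitting time of $0$ from $1$ (exactly the bound used for $\rho_1 - \wt\rho_0$ in Proposition~\ref{prop:rho-bounds}); this running maximum has a bounded second moment, call it $c_2$, again uniform in $x$. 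Combining, $\E_0[X_{\rho_1}^2] \le 2\E_0[(X_{\wt\rho_0})^2] + 2\E_0[(X_{\rho_1} - X_{\wt\rho_0})^2] \le 2c_1 + 2c_2 =: R^{**}$, which is finite and independent of $x$.

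I expect the main obstacle to be making the second step fully rigorous, namely the stochastic domination of $\sup_{\wt\rho_0 \le t \le \rho_1} X_t - X_{\wt\rho_0}$ by a quantity with a bounded second moment. The clean way to do this is to use the graphical construction of Section~\ref{sec:gr-constr-app}: after $\wt\rho_0$ one restarts the construction from $(X_{\wt\rho_0}, X_{\wt\rho_0})$, i.e.\ with $X - R$ equal to $0$, so that $X$ stays below $R + (\text{running maximum of the driving Brownian motion with drift since } \wt\rho_0)$ until the next time $X - R$ returns to $0$, which happens no later than the first return of $B^\mu$ to its value at $\wt\rho_0$; since $\mu > 0$ (the relevant case, as noted after Theorem~\ref{theoremRBM}), that return time has all exponential moments by the argument in Lemma~\ref{lem:coupling}, and by the reflection principle the supremum of $B^\mu$ over such an excursion below its past maximum has Gaussian-type tails, hence a finite second moment. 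The boundary $R$ increasing only helps, so no extra contribution comes from the jumps. The remaining checks — that the relevant random times and suprema are measurable, that the Markov property lets us condition on $\FF_{\wt\rho_0}$, and that all the constants are genuinely uniform in the starting point $x$ — are routine, and I would simply cite the parallel treatment in \citep{DepperschmidtPfaffelhuber:2010} for the details.
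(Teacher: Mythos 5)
There is a genuine gap, and it sits at the heart of your first step: the claim that the boundary has not moved before $\rho_0$, so that $R_{\rho_0}=0$ and $X_{\rho_0}=0$. This is false for the model at hand. The boundary $\RR$ jumps at rate $\gamma(X_t-R_t)$, which is strictly positive exactly on the time interval before $\rho_0$ (when $x>0$), and each jump lands uniformly in $[R_{t-},X_t]$, hence strictly below $X_t$ almost surely; so typically several jumps occur before $X$ first meets $R$, and $X_{\rho_0}=R_{\rho_0}$ is in general strictly positive. The definition of $\wt\rho_0$ only singles out the first jump \emph{after} $\rho_0$; it does not exclude jumps before $\rho_0$. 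Indeed, the paper's own proof of Proposition~\ref{prop:rho-bounds} relies precisely on such pre-$\rho_0$ jumps (the $E_{1/2}$ bound comes from the boundary jumping into $[X_t-1,X_t]$). Consequently $\E_x[X_{\rho_0}^2]$ is not $0$, and your fallback parenthetical does not repair this, since $\sup_{t\le\rho_0}X_t$ is controlled relative to the starting point $x$, not absolutely. The paper's route for this term is different: on $[0,\rho_0)$ the process $X$ is an \emph{unreflected} Brownian motion with drift $-\mu$ (the jumps of $R$ do not affect $X$ before they are met), and one then applies Wald's second identity at the stopping time $\rho_0$ together with $\E_x[\rho_0^2]\le R^*$ from Proposition~\ref{prop:rho-bounds}. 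The same false premise underlies your reduction of the second bound to $\E_0[X_{\rho_1}^2]$; that reduction is in fact legitimate, but via the strong Markov property at $\rho_0$ and translation invariance of the dynamics (at $\rho_0$ the process sits on its boundary), not because $X_{\rho_0}=0$.

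For the second bound your decomposition at $\wt\rho_0$ and the use of Lemma~\ref{lem:exp.m.killing} for $X_{\wt\rho_0}-X_{\rho_0}$ coincide with the paper. For the remaining piece $X_{\rho_1}-X_{\wt\rho_0}$ the paper simply observes that $X$ is again a free Brownian motion with drift $-\mu$ on $[\wt\rho_0,\rho_1)$ and applies the Wald argument of \eqref{eq:26} with the second-moment bound on $\rho_1-\wt\rho_0$; your running-maximum/excursion argument aims at the same conclusion but is inaccurate as written: at $\wt\rho_0$ one does not restart with $X-R=0$ (the boundary jumps to a point strictly below $X_{\wt\rho_0}$, so $X-R>0$ there), and the supremum of $X-X_{\wt\rho_0}$ over $[\wt\rho_0,\rho_1]$ is not dominated by the running maximum over $[0,H_1]$ alone, because during the initial phase (before $X-R\le 1$, bounded by $E_{1/2}$) the process can also rise. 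These points can be repaired, e.g.\ by a Burkholder--Davis--Gundy or Wald estimate over the random interval using $\E[(\rho_1-\wt\rho_0)^2]\le R^*$, but as it stands the key first inequality of \eqref{eq:66} is not proved and the second rests on an incorrect description of the dynamics.
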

\begin{proof}
  Recall that before touching the reflection boundary the process $X_t$ behaves
  as a Brownian motion with drift $-\mu$ and is therefore bounded below by $0$ and
  above by a Brownian motion without drift. Applying the second Wald identity
  and Proposition~\ref{prop:rho-bounds} (with $R^*$ from that proposition) we
  get
  \begin{align} 
    \label{eq:26} 
    \E_x[X_{\rho_0}^2] \le \E_x[B_{\rho_0}^2] = \E_x[\rho_0] \le
    \sqrt{\E_x[\rho_0^2]} \le  \sqrt{R^*}.  
  \end{align} 
  Now we write $X_{\rho_1}-X_{\rho_0}=(X_{\rho_1}- X_{\wt \rho_0})+ (X_{\wt
    \rho_0}-X_{\rho_0})$ and note that as in \eqref{eq:26} the second moment of
  the first term is bounded by $\sqrt{R^*}$. The second moment of the second term 
  is finite according to Lemma~\ref{lem:exp.m.killing} with a bound independent
  of $x$. Taking $R^{**}$ to be the larger of these two bounds \eqref{eq:66}
  follows.
\end{proof}

\begin{proposition}[Asymptotics of $A_t$ and $X^\mu_t-R_t$]  \label{prop:AsympA} 
  We have \[\frac{A_t}{\sqrt t} \to 0 \quad\text{ and } \quad
  \frac{X_t^\mu-R_t}{\sqrt{t}}\to 0 \quad \text{a.s.\ as $t \to \infty$}.\]  
\end{proposition}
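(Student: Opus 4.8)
The plan is to prove the two convergences separately, using the cumulative-process structure for the first and a direct Borel--Cantelli argument for the second. For $A_t/\sqrt t \to 0$, recall from \eqref{eq:17} that $X_t = S_{M_t} + A_t$ and that $A_t = X_{\rho_0} + X_t - X_{\rho_{M_t}}$ is the increment of $X$ inside the current regeneration block $[\rho_{M_t-1},\rho_{M_t})$ (together with the initial piece up to $\rho_0$). Since $(X_{\rho_{n+1}}-X_{\rho_n},\rho_{n+1}-\rho_n)_{n\ge 0}$ is i.i.d.\ with finite second moments (Propositions~\ref{prop:rho-bounds} and \ref{prop:X-rho-bounds}), the standard renewal-reward bookkeeping applies. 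Concretely, I would bound $|A_t|$ by the maximal oscillation of $X$ over a single block, namely $|A_t| \le |X_{\rho_0}| + \max_{\rho_{M_t-1}\le s < \rho_{M_t}} |X_s - X_{\rho_{M_t-1}}|$. Because between regeneration points $X$ is squeezed between $0$ and a drift-free Brownian motion (as used in the proof of Proposition~\ref{prop:X-rho-bounds}), the oscillation over the $n$-th block is dominated by $\sup_{0\le u \le \rho_n-\rho_{n-1}} |B^{(n)}_u|$ for an i.i.d.\ family of Brownian motions $B^{(n)}$, and by the reflection principle plus the finite second moment of $\rho_n-\rho_{n-1}$ this quantity has a finite second moment. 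A family of i.i.d.\ random variables with finite second moment, say $D_n := |X_{\rho_n}-X_{\rho_{n-1}}|$ or the block-oscillation $O_n$, satisfies $D_n/\sqrt n \to 0$ a.s.\ (Borel--Cantelli, since $\sum_n \Pr(D_n > \e\sqrt n) \le \e^{-2}\sum_n \E[D_1^2]/n \cdot$ \dots\ actually one uses $\sum_n \Pr(D_1^2 > \e^2 n) \le \e^{-2}\E[D_1^2] < \infty$). Since $M_t \le t$ eventually and $M_t/t \to 1/\E[\rho_1-\rho_0]$ a.s.\ by the strong law of large numbers for renewal processes, we get $\max_{n\le M_t} O_n / \sqrt t \to 0$, hence $A_t/\sqrt t \to 0$; the initial term $X_{\rho_0}/\sqrt t \to 0$ trivially.

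For the second statement, $(X_t^\mu - R_t)/\sqrt t \to 0$, note that $X_t - R_t = S_t - B_t^\mu$ in the notation of the graphical construction \eqref{eq:def.SXR}, which is exactly the quantity that is killed/reset at each jump of $\RR$: at each jump time $\tau_n$ the difference $X_t - R_t$ is the position of a killed reflecting Brownian motion and it gets replaced by a fresh (shorter) value. Rather than tracking that directly, I would use the cleaner observation that for every regeneration time $\rho_n$ we have $X_{\rho_n} = R_{\rho_n}$, so $X_t - R_t \le X_t - R_{\rho_{M_t-1}} = X_t - X_{\rho_{M_t-1}} \le O_{M_t}$, the same block-oscillation as above (here I use that $R$ is nondecreasing and $R_{\rho_{M_t-1}} = X_{\rho_{M_t-1}}$). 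Then the identical argument---$O_n$ i.i.d.\ with finite second moment, $M_t \lesssim t$---gives $(X_t - R_t)/\sqrt t \to 0$ a.s.

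The main obstacle is the moment bound on the block-oscillation $O_n = \sup_{\rho_{n-1}\le s < \rho_n}(X_s - X_{\rho_{n-1}})$: one must check that $\E[O_1^2] < \infty$ with a bound uniform in the initial condition, so that the i.i.d.\ comparison is legitimate. This follows by conditioning on the block length $L_1 = \rho_1 - \rho_0$, dominating $X$ above by a standard Brownian motion $B$ on the block, using $\E[\sup_{u\le L_1} B_u^2 \mid L_1] \le c\, L_1$ (Doob's $L^2$ inequality), and then invoking $\E[L_1] \le \sqrt{\E[L_1^2]} \le \sqrt{R^*}$ from Proposition~\ref{prop:rho-bounds}; some care is needed because $\sup_{u\le L_1}B_u$ and $L_1$ are not independent (the block is defined through the path), but the conditional bound $\E[\sup_{u\le L_1}B_u^2\mid \mathcal G] \le c L_1$ holds whenever $L_1$ is a stopping time for $B$, which it is. Once $\E[O_1^2]<\infty$ is in hand, the rest is routine Borel--Cantelli and renewal-theory bookkeeping, and I would keep it brief, possibly citing Remark~6.1 in \citep{DepperschmidtPfaffelhuber:2010} for the analogous step there.
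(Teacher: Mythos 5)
Your proposal is correct and follows essentially the same route as the paper, which omits the argument and refers to Proposition~6.7 of \citep{DepperschmidtPfaffelhuber:2010}: that proof works precisely with the block suprema $Y_n=\sup_{t\in[\rho_{n-1},\rho_n]}\abs{X_t-X_{\rho_{n-1}}}$, their finite moments (via the moment bounds on $\rho_1-\rho_0$ and domination of $X$ by the driving Brownian motion within a block), and a Borel--Cantelli plus renewal-SLLN argument with denominator $\sqrt t$, exactly as you outline. (Only cosmetic fixes are needed, e.g.\ the in-block domination should be by the range of the driftless Brownian motion because of the reflection push on $[\rho_{n-1},\wt\rho_{n-1}]$, and one needs only $M_t/t\to 1/\E[\rho_1-\rho_0]$ rather than $M_t\le t$.)
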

We omit the proof here since it is almost the same as the proof of
Proposition~6.7 in \citep{DepperschmidtPfaffelhuber:2010} where the
corresponding result was shown for the Brownian ratchet without drift. (Note
that the definition of $Y_n$ there should be $Y_n=\sup_{t \in
  [\rho_{n-1},\rho_n]} \abs{X_t - X_{\rho_{n-1}}}$. Also the denominator in the
last two displays of that proof should be $\sqrt t$ instead of $t$.)

\subsection{Proof of Theorem~\ref{theoremRBM}}
\label{sec:proof12}
Here we only sketch the proof and refer for details to Section~7 in
\citep{DepperschmidtPfaffelhuber:2010}.     

In the case $\gamma=0$, the law of large numbers in Theorem~\ref{theoremRBM}
holds since $(X_t)_{t \ge 0}$ is then a reflecting Brownian motion with negative
drift $-\mu$ bounded stochastically by a reflecting Brownian motion without drift.
Therefore $X_t/t \to 0$ a.s.\ as $t \to \infty$.

Hence, we assume $\gamma>0$ in the rest of the proof. We use the regeneration
structure from Definition~\ref{def:cumu1} and set  
\begin{align} \label{eq:188}
  \begin{split}
    r & \coloneqq\E_x[\rho_1-\rho_0], \\  m & \coloneqq\E_x[X_{\rho_1}-X_{\rho_0}], \\  
  \beta^2 & \coloneqq
  \text{Var}_x\Big[X_{\rho_1}-X_{\rho_0}-\frac{(\rho_1-\rho_0)m}r\Big].  
  \end{split}
\end{align}
Here, $r, m$ and $\beta^2$ are independent of $x$ due to the
regeneration structure. According to Propositions~\ref{prop:rho-bounds} and
\ref{prop:X-rho-bounds} the temporal and spatial increments 
$\rho_1-\rho_0$ and $X_{\rho_1}-X_{\rho_0}$ have finite second moments. From
that and Proposition~\ref{prop:AsympA} it follows 
\begin{align*}
  \frac{X_t}{t} \xrightarrow{t \to \infty} \frac m r \text{ a.s.}  
\end{align*}

Furthermore, using the CLT for cumulative processes 
\citep[see e.g.][]{Smith:1955,Roginsky:1994} and Proposition~\ref{prop:AsympA}
we obtain that for all $x \in \R$    
\begin{align*}
  \lim_{t \to \infty} \Pr\biggl(\frac{X_t - t m/r}{\beta (t/r)^{1/2}} \le
  x\biggr) & = \lim_{t \to \infty} \Pr\biggl(\frac{A_t}{\beta (t/r)^{1/2}} + \frac{S_{M_t} - 
    t m/r}{\beta (t/r)^{1/2}} \le x\biggr) \\ & = \lim_{t \to
    \infty} \Pr\biggl(\frac{S_{M_t} - t m/r}{\beta (t/r)^{1/2}} \le 
  x\biggr) = \Phi(x),
\end{align*}
where $\Phi$ denotes the distribution function of the standard normal
distribution, and $\beta^2$ and $r$ are as defined in \eqref{eq:188}.
Hence, the central limit theorem holds for $\sigma = \beta/\sqrt{r}$. 

It remains to compute $m/r$. To this end we use the ratio limit theorem for 
Harris recurrent Markov chains  \citep[see e.g.][]{Revuz:1984}. Let $\nu$ denote
the invariant distribution for $(\mathcal Y, \mathcal W, \eta)$. Using the
ratio limit theorem we obtain that  
\begin{align*}
  \frac{m}{r} = \lim_{t\to\infty}\frac{X_t}t= \lim_{t\to\infty} 
  \frac{R_t}t=\lim_{n\to\infty}\frac{R_{\tau_n}}{\tau_n} = \lim_{n \to
    \infty} \frac{\sum_{k=1}^n W_k}{\sum_{k=1}^n \eta_k} =
  \frac{\E_\nu[W_1]}{\E_\nu[\eta_1]}, 
\end{align*} 
where for the second equality we have used Proposition~\ref{prop:AsympA}. 
We recall that $\E_\nu[W_1]=\E_\nu[Y_1]$.  

Let $v:[0,\infty)^2 \to [0,\infty)$, $(\mu,\gamma) \mapsto v(\mu,\gamma)$, 
denote the speed $m/r$ as a function of $\mu$ and $\gamma$.  In the case
$\gamma=\frac12$ we obtain from Proposition~\ref{prop:exp-under-pi} 
\begin{align*}
  v\left(\mu,\frac12\right) = \frac{\E_\nu[Y_1]}{ \E_\nu[\tau_1]} = -\frac{\mu  
    Ai(\mu^2)+Ai'(\mu^2)}{2Ai(\mu^2)} =
  -\frac12\Bigl(\frac{Ai'(\mu^2)}{Ai(\mu^2)}+\mu\Bigr).  
\end{align*}

Now let $\mu\ge 0$ and $\gamma >0 $ be given. Using the scaling property (see
Lemma~\ref{lem:scaling}) we obtain  
\begin{align*}
  (X^{\gamma,\mu}_t,R_t^{\gamma,\mu}) \stackrel{d}{=}  
  (2\gamma)^{-1/3} \left(X^{1/2,(2\gamma)^{-1/3}\mu}_{(2\gamma)^{2/3}t},
  R_{(2\gamma)^{2/3}t}^{1/2,(2\gamma)^{-1/3}\mu}\right).
\end{align*}
Thus, we have 
\begin{align*}
  v(\mu,\gamma) & = \lim_{t\to\infty} \frac{\E\left[X^{\gamma,\mu}_t\right]}{t}
   =  (2\gamma)^{1/3}\lim_{t\to\infty}
  \frac{\E\left[X^{1/2,(2\gamma)^{-1/3}\mu}_{(2\gamma)^{2/3}t}\right]}{(2\gamma)^{2/3}t}  
  \\ & = (2\gamma)^{1/3} v\Bigl((2\gamma)^{-1/3}\mu,\frac12\Bigr) \\ 
 & = -\frac{(2\gamma)^{1/3}}{2} 
  \Bigl(\frac{Ai'((2\gamma)^{-2/3}\mu^2)}{Ai((2\gamma)^{-2/3}\mu^2)} 
  +(2\gamma)^{-1/3}\mu\Bigr)  
\end{align*}
which concludes the proof of Theorem~\ref{theoremRBM}.

\section{Ornstein-Uhlenbeck ratchet}
\label{sec:OU-ratchet}

In this section we carry out the same program for the Ornstein-Uhlenbeck ratchet
as we did for the Brownian ratchet. The arguments in many of the proofs here are 
similar to the corresponding proofs in the previous section. Therefore some
proofs in this section will be sketchy.

\subsection{Graphical construction}
\label{sec:refl-OU}

Let us first recall the definition of the reflecting Ornstein-Uhlenbeck (OU) 
process with infinitesimal drift $-\mu x$ and unit variance.  
Suppose that $B=\left(B(t)\right)_{t\ge 0}$ is a standard Brownian motion 
starting in $0$ and let $x_0\ge 0$. Then using the representation of the OU
process as a time changed Brownian motion, we obtain that  $\wh Z\coloneqq
\left(\wh  Z_t\right)_{t \ge 0}$, defined by    
\begin{align}
  \label{eq:OUfromBM}
  \wh Z_t \coloneqq  \Abs{x_0 e^{-\mu t} + \frac1{\sqrt{2\mu}} e^{-\mu t} B(e^{2\mu t}  
    -1)}
\end{align}
is a reflecting  OU process with infinitesimal drift $-\mu x$ and unit
variance starting in $x_0$. It is a diffusion process on $[0,\infty)$ associated 
with infinitesimal operator $\wh A^\mu$ acting on   
\begin{align*}
  \mathcal D(\wh A^\mu)\coloneqq  \{f \in C_b^2(\R_+) : f'(0+) =0\}  
\end{align*}
as follows: 
\begin{align}
  \label{eq:gener-OU}
  \wh A^\mu f(x) \coloneqq \frac 12 f''(x) -\mu x f'(x). 
\end{align}

The graphical construction in the following definition (see
Figure~\ref{fig:ou-ratchet}) is different from the graphical construction of the 
Brownian ratchet. Here we use a family of independent Brownian motions to
construct reflected OU processes between the jumps of the ratchet. At any jump
time a new reflected OU process starts in an initial value chosen uniformly
between the state of the previous process and zero. Then we stick this
``peaces'' together to obtain the OU ratchet.

\begin{figure}[htb]
  \begin{center}
    \includegraphics[width=0.98\textwidth]{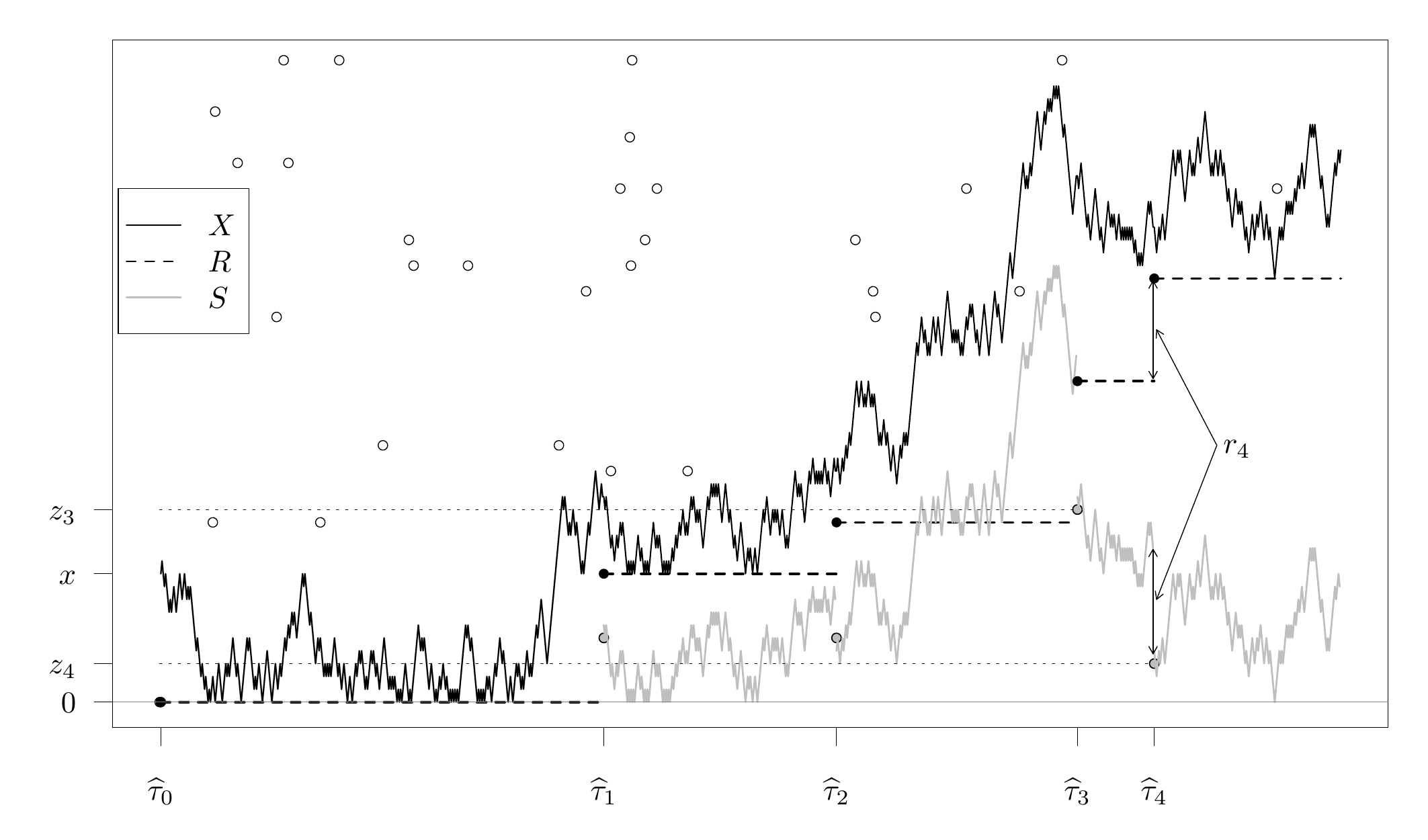}%
    \caption{Graphical construction of the Ornstein-Uhlenbeck ratchet.} 
    \label{fig:ou-ratchet}
  \end{center}
\end{figure}

\begin{definition}[Graphical construction of the Ornstein-Uhlenbeck ratchet] 
  \label{def:gr-constr-OUR} 
  Let $N^{\gamma}$ be a Poisson process and let $\left((B^{(i)}(t))_{t\ge  
      0}\right)_{i=0,1,\dots}$ be independent standard Brownian motions.  
  We define a sequence of stopping times $(\wh\tau_n)_{n =0,1,\dots}$ and a 
  sequence of  Ornstein-Uhlenbeck processes $(\wh S^{(n)})_{n=0,1,\dots}$
  with $\wh S^{(n)}=(S^{(n)}_t)_{t \ge \tau_n}$  reflecting at $0$ as follows:   
  \begin{align} 
    \label{eq:23} 
    \wh\tau_0 & = 0, \\
    \label{eq:10} 
    \wh S^{(0)}_t & = \Abs{x_0e^{-\mu t} + e^{-\mu t}\frac1{\sqrt{2\mu}} 
      B^{(0)}(e^{2\mu  t}-1)}.    
  \end{align}
  Given $\wh\tau_{n-1}$ and $\wh S^{(n-1)}$ for some  $n \ge 1$ we set  
  \begin{align}
    \label{eq:OU-tau} 
    \wh\tau_{n} & =\inf\{t > \wh\tau_{n-1}: N^\gamma \cap [0,\wh S^{(n-1)} (t)]\times
    \{t\} \ne   \emptyset\bigr\}   
  \end{align}
and let $z_{n}$ be the space component of the almost surely
unique element of $N^\gamma \cap [0,\wh S^{(n-1)}_{\wh\tau_{n}}]\times
\{\wh\tau_{n}\}$. Furthermore we set $r_{n}=\wh S^{(n-1)}_{\wh \tau_{n}}-z_{n}$
and for $t \ge \wh \tau_{n}$ we define  
\begin{align}
  \label{eq:25} 
  \wh S^{(n)}_t = \Abs{z_n e^{-\mu(t-\wh\tau_n)} + e^{-\mu(t-\wh \tau_n)}
    \frac{1}{\sqrt{2\mu}} B^{(n)}(e^{2\mu(t-\wh\tau_n)}-1)}. 
\end{align}
Finally we set for $t \in [\wh\tau_{n},\wh\tau_{n+1})$   
  \begin{align} \label{eq:24} 
    \wh R_t & = \sum_{i \le n} r_i  \quad \text{ and } \quad   \wh S_t =
    \wh S^{(n)}_t \\ \intertext{and for $t\ge 0$}
    \wh X_t & = \wh R_t + \wh S_t.     
  \end{align}
  Note that by construction we have 
  \begin{align}
    \label{eq:28} 
    \wh S \le \wh S^{(0)} 
  \end{align}
  stochastically.   \qed 
\end{definition}

The following lemma is the analogue of Lemma~\ref{lem:gr-constr-BR}. Though the
graphical construction there is somewhat different the proof is similar and will
be omitted here. 
\begin{lemma} \label{lem:gr-constr-OUR} 
  The process $(\wh\XX,\wh\RR) \coloneqq  (\widehat X_t, \widehat R_t)_{t \ge 0}$ is 
  a $(\gamma,\mu)$-Ornstein-Uhlenbeck rat\-chet starting in $(x_0,0)$.    
\end{lemma}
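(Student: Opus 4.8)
The plan is to imitate, step by step, the proof of Lemma~\ref{lem:gr-constr-BR}, verifying that $(\wh\XX,\wh\RR)$ has the three properties characterising the $(\gamma,\mu)$-Ornstein-Uhlenbeck ratchet of Subsection~\ref{sec:model}: that $\wh\RR$ is non-decreasing with $\wh R_t\le\wh X_t$; that $\wh\XX$ has continuous paths and runs, between consecutive jumps of $\wh\RR$, as the reflecting OU process of Section~\ref{sec:refl-OU} reflected at the current level of $\wh\RR$; and that $\wh\RR$ jumps at rate $\gamma(\wh X_t-\wh R_t)$ to a point uniform on $[\wh R_{t-},\wh X_t]$. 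The elementary part is immediate: since $z_n$ is the space coordinate of a point of $N^\gamma$ lying in $[0,\wh S^{(n-1)}_{\wh\tau_n}]$ we have $0\le z_n\le\wh S^{(n-1)}_{\wh\tau_n}$, hence $r_n=\wh S^{(n-1)}_{\wh\tau_n}-z_n\ge0$ and $\wh\RR$ is non-decreasing; and $\wh S_t\ge0$ gives $\wh X_t=\wh R_t+\wh S_t\ge\wh R_t$. Also $\wh R_0=0$ (empty sum) and $\wh X_0=\wh S^{(0)}_0=x_0$, so the process starts in $(x_0,0)$.

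For continuity of $\wh\XX$ I would compare the left and right values at a jump time $\wh\tau_n$. On the left, $\wh R_{\wh\tau_n-}=\sum_{i\le n-1}r_i$ and $\wh S_{\wh\tau_n-}=\wh S^{(n-1)}_{\wh\tau_n}$, while on the right $\wh R_{\wh\tau_n}=\sum_{i\le n}r_i$ and, by \eqref{eq:25} evaluated at $t=\wh\tau_n$, $\wh S_{\wh\tau_n}=\wh S^{(n)}_{\wh\tau_n}=\abs{z_n}=z_n$; therefore $\wh X_{\wh\tau_n}=\sum_{i\le n-1}r_i+r_n+z_n=\sum_{i\le n-1}r_i+\wh S^{(n-1)}_{\wh\tau_n}=\wh X_{\wh\tau_n-}$, i.e.\ the downward jump of $\wh S$ cancels the upward jump of $\wh R$. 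On each interval $[\wh\tau_n,\wh\tau_{n+1})$ the level $\wh R_t\equiv\wh R_{\wh\tau_n}$ is constant, and by the time-change representation \eqref{eq:OUfromBM} the process $\wh S^{(n)}=(\wh X_t-\wh R_{\wh\tau_n})_{t\ge\wh\tau_n}$ is a reflecting OU process with infinitesimal drift $-\mu x$ and unit variance reflected at $0$ started from $z_n$; hence $\wh\XX$ is, on that interval, the reflecting OU process with generator \eqref{eq:gener-OU} reflected at $\wh R_{\wh\tau_n}$.

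For the jump mechanism I would condition on the evolution up to $\wh\tau_n$ (equivalently, on $B^{(n)}$, which is independent of $N^\gamma$). By the independence and homogeneity of the Poisson process $N^\gamma$ on $\R\times[0,\infty)$, the first time after $\wh\tau_n$ at which a point of $N^\gamma$ lies under the curve $t\mapsto\wh S^{(n)}_t$, i.e.\ the time $\wh\tau_{n+1}$ from \eqref{eq:OU-tau}, has conditional hazard rate equal to $\gamma$ times the length of $[0,\wh S^{(n)}_t]$, that is $\gamma\wh S^{(n)}_t=\gamma(\wh X_t-\wh R_t)$; and, given $\wh\tau_{n+1}$, the space coordinate $z_{n+1}$ of that point is uniform on $[0,\wh S^{(n)}_{\wh\tau_{n+1}}]$. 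Writing $z_{n+1}=U\,\wh S^{(n)}_{\wh\tau_{n+1}}$ with $U\sim U([0,1])$ we obtain, exactly as in the proof of Lemma~\ref{lem:gr-constr-BR},
\[
  \wh R_{\wh\tau_{n+1}}=\wh R_{\wh\tau_n}+r_{n+1}=\wh R_{\wh\tau_n}+(1-U)\,\wh S^{(n)}_{\wh\tau_{n+1}}=\wh R_{\wh\tau_n}+(1-U)(\wh X_{\wh\tau_{n+1}}-\wh R_{\wh\tau_n}),
\]
which is uniformly distributed on $[\wh R_{\wh\tau_n},\wh X_{\wh\tau_{n+1}}]$. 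Finally, since the $B^{(i)}$ are i.i.d.\ and independent of $N^\gamma$ and, by the strong Markov property of the Poisson process, the restriction of $N^\gamma$ to times $>\wh\tau_{n+1}$ is again a fresh Poisson process, the piece started at $\wh\tau_{n+1}$ is an independent copy of the same mechanism started from $(\wh X_{\wh\tau_{n+1}},\wh R_{\wh\tau_{n+1}})$; combined with the strong Markov property of the reflecting OU pieces this yields the strong Markov property and time-homogeneity of $(\wh X_t,\wh R_t)_{t\ge0}$, so it is the $(\gamma,\mu)$-Ornstein-Uhlenbeck ratchet started in $(x_0,0)$.

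The main obstacle is not any individual computation but, as in the Brownian case, the bookkeeping needed to pass from this pathwise recursion to a bona fide two-dimensional Markov process: one must check that $\wh\tau_{n+1}<\infty$ and, more importantly, that only finitely many jumps accumulate in any bounded time interval, so that $\wh X_t$ and $\wh R_t$ are defined for all $t\ge0$; this follows from the stochastic domination $\wh S\le\wh S^{(0)}$ noted in \eqref{eq:28} together with the almost-sure finiteness of $\gamma\int_0^t\wh S^{(0)}_s\,ds$, which bounds the total jump intensity on $[0,t]$. One also has to make sure that ``reflecting OU reflected at the moving level $\wh R_t$'' indeed realises the generator \eqref{eq:gener-OU} translated to the point $\wh R_t$ on its natural domain $\{f\in C_b^2(\R_+):f'(0+)=0\}$, which is precisely the content of the identity \eqref{eq:OUfromBM}. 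Since every step is a verbatim adaptation of the proof of Lemma~\ref{lem:gr-constr-BR}, I would present it in abbreviated form and refer there for the remaining details.
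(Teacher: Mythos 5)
Your proposal is correct and takes essentially the approach the paper intends: the paper omits the proof of Lemma~\ref{lem:gr-constr-OUR} precisely because it is the analogue of Lemma~\ref{lem:gr-constr-BR}, and your verification (monotonicity of $\wh\RR$, continuity of $\wh\XX$ at jump times since $\wh S$ drops by $r_n$ exactly as $\wh R$ rises by $r_n$, the reflecting OU dynamics between jumps via \eqref{eq:OUfromBM}, the jump rate $\gamma(\wh X_t-\wh R_t)$ and the uniform redistribution written as $(1-U)(\wh X_{\wh\tau_{n+1}}-\wh R_{\wh\tau_n})$) is exactly that adaptation. Your extra remark on non-accumulation of jumps via the domination \eqref{eq:28} is a sensible addition beyond what the paper records, but it does not change the route.
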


Now we  construct a coupling of two Ornstein-Uhlenbeck ratchets starting in
$((x_0,0),( x'_0,0))$, where we assume $x_0 \geq  x'_0 \geq 0$ without loss 
of generality. Let the Poisson process $N^\gamma$ and a sequence
$((B^{(i)}_t)_{t\geq0})_{i = 0,1,\dots}$ of independent standard Brownian
motions be given as before. To construct the coupling  
\begin{align*}
  \bigl((\wh X_t,\wh R_t),( \wh{X}'_t,\wh{R}'_t)\bigr)_{t\ge 0}, 
\end{align*}
set $\wh \tau_0=\wh {\tau'}_0=0$, $\wh {S}_0^{(0)}=x$, $\wh {S'}_0^{(0)}=   
x'$ and define the sequences $(\wh \tau_n)_{n\ge 0}$ and $(\wh \tau'_n)_{n
  \ge 0}$ and the processes $(\wh S^{(n)})_{n\ge 0}$ and $( \wh {S'}^{(n)})_{n\ge 0}$
as in Definition~\ref{def:gr-constr-OUR}. Furthermore define     
$\wh S$, $\wh {S}'$, $(\wh X_t,\wh R_t)_{t\ge 0}$ and $(\wh X'_t,\wh R'_t)_{t\ge
  0}$ as in \eqref{eq:24}. Then define the coupling time by   
\begin{align*} 
  \wh T_{\textnormal{coupl}}\coloneqq \inf\{t>0: \wh S_t = \wh S'_t\}.   
\end{align*}
Since we can use the same Brownian motions and the same Poisson  
process for both ratchets we have $\wh S_t = \wh S'_t$ for all  $t \ge
\wh T_{\textnormal{coupl}}$. Thus, on the event $\{\wh T_{\textnormal{coupl}} < \infty\}$,
there are $\wh n, \wh n'$ such that for $k \ge 0$ 
\begin{align*} 
    \wh \tau_{n+k}  =\wh \tau'_{{n} +k}, &&
    \wh X_{\wh \tau_{n+k+1}}-\wh X_{\wh \tau_{n+k}}  =\wh X'_{\wh \tau_{n+k+1}}
    -\wh X'_{\wh \tau_{n+k}},   &&
    \wh R_{\wh \tau_{n+k+1}}-\wh R_{\wh \tau_{n+k}}  =\wh R'_{\wh \tau_{n+k+1}}
    -\wh R'_{\wh \tau_{n+k}}. 
\end{align*}
The following lemma shows that the coupling is successful with probability one.  
\begin{lemma}[Exponential moments of the coupling time] \label{lem:exp-mom-coupl-OUR}
  For any $\mu >0$ there is $\alpha>0$ so that
  \begin{align*}
    \mathbb E [ e^{\alpha \wh T_{\textnormal{coupl}}}] < \infty.    
 \end{align*}
\end{lemma}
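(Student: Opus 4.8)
The plan is to adapt the argument of Lemma~\ref{lem:coupling}: produce a stopping time $\wh T$ with $\wh T_{\textnormal{coupl}} \le \wh T$ almost surely and then bound $\E[e^{\alpha \wh T}]$ for some small $\alpha>0$. As in the Brownian case, the two coupled OU ratchets are forced to agree once a synchronisation event occurs; here the natural one is that both diffusion parts $\wh S$ and $\wh S'$ lie simultaneously in a fixed small neighbourhood $[0,\delta]$ of the reflecting boundary and the next point of $N^\gamma$ falls in $[0,\delta]$, triggering a joint jump onto a common value, from which instant the shared Poisson process and shared driving Brownian motions render the two trajectories identical. Thus $\wh T_{\textnormal{coupl}}$ is bounded by the first such synchronisation time $\wh T$.

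The first quantitative ingredient I would establish is a hitting estimate for the reflected OU process. Using the time-change representation \eqref{eq:OUfromBM}, a reflected OU process started from $a \ge 0$ first reaches $0$ at time $H^{(a)} = \tfrac1{2\mu}\log(1+\tau_a)$, where $\tau_a = \inf\{s \ge 0: B(s) = -a\sqrt{2\mu}\,\}$ is a Brownian first-passage time. Since $\tau_a$ has the one-sided stable-$\tfrac12$ law, $\E[\tau_a^p]<\infty$ exactly for $p<\tfrac12$, so $\E[e^{\beta H^{(a)}}] = \E[(1+\tau_a)^{\beta/(2\mu)}]<\infty$ for every $\beta\in[0,\mu)$; this parallels the range $\alpha<\mu$ appearing in Lemma~\ref{lem:exp.m.killing}. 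Combining this with the stochastic domination \eqref{eq:28} (and its analogue for the primed ratchet), between jumps each diffusion part is, in distribution, dominated by a reflected OU process from a level that is itself stochastically of order $x_0$, hence reaches $[0,\delta]$ within a time whose exponential moments of some order $\beta_0>0$ are finite; the exponential jump clocks of the ratchets trivially have all exponential moments below their rates.

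Finally I would run a renewal argument to pass from a single attempt to the coupling time. Cutting time at the jumps of the two ratchets, call a window of fixed length an attempt if during it neither ratchet jumps, both diffusion parts enter $[0,\delta]$, and the ensuing Poisson point lands in $[0,\delta]$. Using \eqref{eq:28}, tightness of the reflected OU and small-ball lower bounds, one checks that each attempt succeeds with probability at least some $p_0>0$ and has duration with a finite exponential moment; consequently $\wh T_{\textnormal{coupl}}$ is stochastically dominated by a geometric sum of i.i.d.\ random variables with finite exponential moments, and the standard generating-function estimate gives $\E[e^{\alpha\wh T_{\textnormal{coupl}}}]<\infty$ for all sufficiently small $\alpha>0$.

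The hard part will be the first step, i.e.\ making the synchronisation precise. In contrast to the Brownian ratchet, where one running-maximum process $S$ tied to a single Brownian motion $B^\mu$ does all the bookkeeping and immediately forces $S_t=\wt S_t$ as soon as $B^\mu$ meets the common maximum, the OU construction grafts a fresh Brownian motion onto the process at every jump; one therefore has to set up the coupled construction so that the two ratchets share all their driving noise from the first time they jump together onto a common point of $N^\gamma$ with equal jump counts, and check that this time indeed dominates $\inf\{t:\wh S_t=\wh S'_t\}$. The remaining, more routine, point is the uniform-in-state lower bound $p_0$ on the success probability of one attempt, for which the stochastic domination by the positive-recurrent reflected OU is the essential input.
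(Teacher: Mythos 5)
Your hitting-time computation is correct and is in fact the key quantitative ingredient: via \eqref{eq:OUfromBM} the time for a reflected OU started at $a$ to reach $0$ is $\tfrac1{2\mu}\log(1+\tau_a)$ with $\tau_a$ a Brownian first-passage time, so it has exponential moments of every order $\beta<\mu$, matching the $\mathrm{Erf}$ tail bound the paper takes from Borodin--Salminen. But the route you build around it has a genuine gap: in the geometric-trials step you assert that each fixed-length attempt succeeds with probability at least some uniform $p_0>0$. The state at the start of an attempt (the current values of $\wh S$ and $\wh S'$) is unbounded, and from a large level $a$ the probability that a reflected OU enters $[0,\delta]$ within a window of fixed length $T$ decays like $\exp(-c\,a^2e^{-2\mu T})$, so no such uniform $p_0$ exists. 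To repair this you would have to control the law of the state at the beginning of every attempt (e.g.\ through \eqref{eq:28} and the Gaussian tails of the dominating reflected OU), which makes the renewal argument substantially heavier than sketched. Two smaller points: a joint jump requires the Poisson point to lie below $\min(\wh S_t,\wh S'_t)$, not merely in $[0,\delta]$; and the issue you flag as ``the hard part'' (arranging shared driving noise after a joint jump) is irrelevant for this lemma, because $\wh T_{\textnormal{coupl}}$ is by definition only the first meeting time of $\wh S$ and $\wh S'$.

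The paper's proof avoids all of this with one structural observation you do not use: taking $x_0>x_0'$, the order $\wh S_t\ge \wh S'_t$ persists up to the coupling time (a Poisson point below $\wh S'$ makes both processes jump to the same value, a point between $\wh S'$ and $\wh S$ moves only $\wh S$ and keeps the order, and between jumps the paths are continuous, so they cannot cross without meeting). Hence $\wh S'$ is squeezed against $0$, and $\wh T_{\textnormal{coupl}}$ is bounded by the first time $\wh S$ hits $0$, which by \eqref{eq:28} is stochastically dominated by the hitting time of $0$ of the single reflected OU $\wh S^{(0)}$ started at $x_0$ --- precisely the random variable whose exponential moments you already computed. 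So your step 2 plus this monotonicity remark gives the lemma directly, with no synchronisation events or renewal structure needed; this mirrors the proof of Lemma~\ref{lem:coupling} more closely than your proposal does.
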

\begin{proof}
  We only need to consider the case $x_0 > x'_0$. In that case $\wh S_t \ge \wh
  S_t'$ for all $t$.  Furthermore, from \eqref{eq:28} it follows that $\wh S$ is
  stochastically dominated by the reflected Ornstein-Uhlenbeck process $\wh
  S^{(0)}$.  Thus, $\wh T_{\tn{coupl}}$ is stochastically bounded by the hitting
  time of $0$, say $H_0$, by the process $\wh S^{(0)}$. For $H_0$ we have
  \citep[see][p.~542]{BorodinSalminen:2002}
  \begin{align*}
    \Pr_{x_0}[H_0 >t] = \mathrm{Erf}(x_0/\sqrt{2(e^{2\mu t} -1)}), 
  \end{align*}
  where $\mathrm{Erf}(x) = 2\pi^{-1/2} \int_0^x e^{-u^2}\, du =
  2\pi^{-1/2}\sum_{n=0}^\infty \frac{(-1)^n x^{2n+1}}{n! (2n+1)}$. Thus,  as $t 
  \to \infty$ we obtain
  \begin{align*}
        \Pr_{x_0}[H_0 >t] = \frac2{\sqrt\pi} \frac{x_0}{\sqrt{2(e^{2\mu t} -1)}}
        + o (e^{-3\mu t}) \le x_0 C e^{-\mu t} 
  \end{align*}
for suitably chosen positive $C$. 

\end{proof}

\subsection{Green function of the killed reflected Ornstein-Uhlenbeck process}
\label{sec:Greem-OU}
In this subsection we carry out analogous computations to those in 
Subsection~\ref{sec:green}; recall in particular Remark~\ref{rem:Green-gener}. 

\begin{definition}[Killed reflecting Ornstein-Uhlenbeck process]
  \label{rem:killed-rou} 
  Let $\wh Z \coloneqq (\wh Z_t)_{t\ge 0}$ denote reflecting Ornstein-Uhlenbeck
  process with infinitesimal drift $-\mu x$ and unit infinitesimal variance
  starting in $x \ge 0$ and let $\Pr_{x}$  denote the corresponding law on
  the paths space and $\E_{x}$ the expectation under this law. Furthermore 
  using an exponentially distributed rate $1$ random variable $\xi$
  independent of $\wh Z$ we define the \emph{killing time} by  
  \begin{align*}
    \wh\tau=\inf\{t >0: \gamma \int_0^t \wh Z_s\,ds \ge \xi\}.     
  \end{align*}
  The \emph{reflecting Ornstein-Uhlenbeck process killed at rate $\gamma \wh
    Z$} is defined as the process $\wh Z^K \coloneqq (\wh Z^K_t)_{t\ge 0}$
  with $\wh Z^K_t = \wh Z_t$ for $t \in [0,\tau)$ and $\wh Z^K_t = \Delta$ 
  for $t \ge \tau$ where $\Delta \not\in \R$ is the \emph{cemetery state.}    
  The infinitesimal operator $\wh A^{\mu,\gamma}$ corresponding to $\wh Z^K$
  acts on $C^2$ functions $f:[0,\infty) \to \R$ satisfying $f'(0+)=0$ as
  follows  
  \begin{align} 
    \label{eq:gen-killed-OU}  
    \wh A^{\mu,\gamma} f(x) = \frac12 f''(x)  - \mu xf'(x) -\gamma x f(x).    
  \end{align}
  The speed measure and the killing measure corresponding to the killed
  Ornstein-Uhlenbeck process are given by 
  \begin{align}
    \label{eq:speed-killing-measure} 
    m(dx)=m(x) \, dx \coloneqq 2e^{-\mu x^2}\,dx \quad \text{and} \quad  k(dx)=k(x)
    \,dx \coloneqq  2\gamma xe^{- \mu x^2}\, dx.
  \end{align}
\end{definition}
In the case of Ornstein-Uhlenbeck ratchet the \emph{confluent hypergeometric
  functions} play a similar role as the Airy functions in the case of Brownian
ratchet. In the following remark we collect some of their properties that will
be needed in the sequel. We refer to \citep[][Ch.~13]{NIST-Handbook} for most of
the properties and for more information on confluent hypergeometric functions.
\begin{remark}[Confluent hypergeometric functions]  \label{rem:confl-funct}
 Assume that $b \not \in\Z$
  and consider the Kummer equation (also known as the confluent hypergeometric 
  equation)  
  \begin{align} 
    \label{eq:11} 
    x z''(x) + (b-x)z'(x)-a z(x)=0.   
  \end{align}
  Note that solutions also exist in the case $b \in \Z$ but some of relations 
  that we are going to recall in this remark and use in the following may not
  hold in this case.  
  \begin{asparaenum} 
  \item \emph{Standard solutions:} One standard solution of that equation is
    given by the function $x\mapsto M(a,b,x)$ which is defined by   
    \begin{align}
      \label{eq:M} 
      M(a,b,x) & \coloneqq  \sum_{n=0}^\infty \frac{(a)_n}{(b)_n}\frac{x^n}{n!},   
    \end{align}
    where $(a)_n$ is the Pochhammer's symbol defined by $(a)_0=1$ and
    $(a)_n=a(a+1) \cdots (a+n-1)$ for $n \ge 1$. Another standard solution is
    given by the function $x\mapsto U (a,b,x)$ which can be defined (in the case
    $b\not\in \Z$) by
    \begin{align}
      \label{eq:U} 
      U(a,b,x) & \coloneqq  \frac{\Gamma(1-b)}{\Gamma(a-b+1)} M(a,b,x) +
      \frac{\Gamma(b-1)}{\Gamma(a)} x^{1-b} M(1+a-b,2-b,x),   
    \end{align}
    where $\Gamma$ denotes the Gamma function. If $a\ne 0,-1,-2,\dots$, then $U$
    and $M$ are independent solutions of \eqref{eq:11}. For $a = 0, -1, \dots$
    we have $\abs{\Gamma(a)}=\infty$ and the second summand on the right hand
    side of \eqref{eq:U} vanishes. Thus, as is easily seen from \eqref{eq:M} and
    \eqref{eq:U}, in that case both $U$ and $M$ are polynomials which are equal
    up to a multiplicative constant. The system of independent solutions in the
    case $a = 0, -1, \dots$ is given by
    \begin{align}
      \label{eq:ind-sol-MM}
      U(a,b,z) \; \text{ and }  \;  z^{1-b} M(a-b+1,2-b,z).  
    \end{align}
  \item \emph{Behaviour in the neighbourhood of zero:} For any $a,b \in \R$ 
    \begin{align}\label{eq:39} 
      M(a,b,x) = 1 + \mathcal{O}(x). 
    \end{align}
    For $b \in (0,1)$    
    \begin{align}
      \label{eq:35}
      U(a,b,x) = \frac{\Gamma(1-b)}{\Gamma(a-b+1)} + \mathcal{O} (x^{1-b}). 
    \end{align}
    For $b \in (1,2)$ 
    \begin{align}
      \label{eq:34}
      U(a,b,x) = \frac{\Gamma(b-1)}{\Gamma(a)} x^{1-b} +
      \frac{\Gamma(1-b)}{\Gamma(a-b+1)} + \mathcal{O} (x^{2-b}).  
    \end{align}
  \item \emph{Behaviour at infinity:}  For $a \ne -1,-2,\dots$  
    and $b >0$ 
    \begin{align}
      \label{eq:M-asympt}
      M(a,b,x) \sim \frac{x^{a-b}}{\Gamma(a)}e^{x} \text{ as } x \to \infty.
    \end{align}
    Here, as usual, we write $g(x) \sim h(x)$ if $g(x)/h(x) \to 1$ as $x \to
    \infty$. Furthermore
    \begin{align}
      \label{eq:U-asympt}  
      U(a,b,x) \sim x^{-a} \text{ as } x \to \infty. 
    \end{align}
    Note also that $U$ is uniquely determined by this property.  
  \item \emph{Bounds for positive zeros of $U$ and $M$:} If $a, b \ge 0$ then
    $M$ has no zeros on $[0,\infty)$. Let $P(a,b)$ be the number of positive
    zeros of $U(a,b,x)$. First we note that using the Kummer transformation
    \begin{align}
      \label{eq:Kummer}
     U(a,b,x)=x^{1-b} U(a-b+1,2-b,x) 
    \end{align}
    we get
    \begin{align}
      \label{eq:55}
      P(a,b) = P(a-b+1,2-b).
    \end{align}
    If $a$, $b$ and $a+b-1$ are non-integers, $b < 1$ and $a+1 \ge b$ then $P(a,b) = 0$. 
    Let $a < 0$ and $1 \le b \le 2$. If $x_0$ is a positive zero of $U(a,b,x)$  
     then  by (2.19) in \citep{MR1052433}   
     \begin{align}
       \label{eq:48}
       x_0 < 4(\frac b 2 -a). 
     \end{align} 
     
  \item \emph{Differentiation formulas:} We have  
    \begin{align}
      \label{eq:diff-U}
      \frac{d}{dx} U(a,b,x)& =-a U (a+1,b+1,x) \\ \intertext{and}   
      \label{eq:diff-M} 
      \frac{d}{dx} M(a,b,x)  &=  \frac{a}{b} M (a+1,b+1,x).  
    \end{align}
  \item \emph{Recurrence relation:} There are many recurrence relations for $U$
    and $M$. We will need the following (it follows from (13.4.25) in
    \citep{AbramowitzStegun:1972} and  \eqref{eq:diff-U})   
    \begin{align}
      \label{eq:51}
      U(a,b+1,x) = U(a,b,x) + a U(a+1,b+1,x). 
    \end{align}
    
  \hfill \qed 
  \end{asparaenum}
\end{remark}
By straightforward computation one can show that if a function
$f_{\gamma,\mu}(x)$ is a solution of the Kummer equation \eqref{eq:11} with 
$a=-\frac{\gamma^2}{4\mu^3}$ and $b=\frac12$, then
\begin{align} \label{eq:52} 
  F_{\gamma,\mu}(x)=e^{-\gamma
    x/\mu}f_{\gamma,\mu}\left(\left(\frac\gamma{\mu^{3/2}}+\sqrt \mu x\right)^2\right) 
\end{align}
is a solution of
\begin{align}
  \label{eq:exp-confl}
  \wh A^{\mu,\gamma} f = 0.  
\end{align}
As in the case of killed Brownian motion with negative drift there are positive
solutions $\wh \phi$ and $\wh \psi$ of \eqref{eq:exp-confl} on $[0,\infty)$ with
Wronskian $w(\wh\psi,\wh\phi) \coloneqq \wh\psi'(x)\wh\phi(x) -
\wh\psi(x)\wh\phi'(x)=1$ such that
\begin{align} 
  \label{eq:whphiwhpsi} 
  \begin{cases}
   \text{$\wh \phi$ is decreasing and $\wh \phi(x)\to 0$ as $x \to
     \infty$,} & \\
    \text{$\wh \psi$ is increasing and $\wh \psi'(0)=0$}.  & 
  \end{cases}
\end{align}
Then the Green function of the killed Ornstein-Uhlenbeck process is given by  
\begin{align}
  \label{D:OPUgreenfunction}
  G(x,y)\coloneqq  
  \begin{cases}
    \wh \phi(x)\wh\psi(y) & : \; 0 \le y \le x,\\ 
    \wh \psi(x) \wh\phi(y) & : \; 0 \le x \le y.   
  \end{cases}
\end{align}

We define the function $x \mapsto p(x)$ by  
\begin{align}
  \label{eq:40} 
  p(x) = \frac\gamma{\mu^{3/2}}+\sqrt \mu x.  
\end{align}
For general $\gamma$ and $\mu$ the solutions of \eqref{eq:11} with
$a=-\frac{\gamma^2}{4\mu^3}$ and $b=\frac12$ with $a=-\frac{\gamma^2}{4\mu^3}$
are hard to deal with. Thus, we will not formulate an analogue of 
Lemma~\ref{lem:green-krbm} for the killed reflecting Ornstein-Uhlenbeck process.
In the following lemma we will identify the decreasing solution $\wh\phi$. For
$\wh\psi$ we will assume that for any given $\mu,\gamma >0$ there is a suitable 
linear combination, say  $\wh M(x)$, of   
\begin{align} 
  \label{eq:30} 
  M\left(-\frac{\gamma^2}{4\mu^3},\frac12,p^2(x) \right) \; & \text{ and } \;
  U\left(-\frac{\gamma^2}{4\mu^3},\frac12,p^2(x) \right) \\ \intertext{ if
    $\frac{\gamma^2}{4\mu^3} \ne 1, 2,\dots$, and} \label{eq:32} p(x)
  M\left(-\frac{\gamma^2}{4\mu^3}+\frac12,\frac32,p^2(x) \right) \; & \text{ and
  } \; U\left(-\frac{\gamma^2}{4\mu^3},\frac12,p^2(x) \right)
\end{align}
if $\frac{\gamma^2}{4\mu^3} = 1, 2,\dots$,  so that the function 
$\wh\psi:[0,\infty) \to [0,\infty)$ defined by        
\begin{align} \label{eq:psiphi}
    \wh\psi(x) =e^{-\gamma x /\mu} \wh M(x)  
\end{align}
satisfies the condition in \eqref{eq:whphiwhpsi}. 

\begin{lemma} 
  \label{lem:whphi-decr} 
  The function $x \mapsto \wh \phi(x)$ defined on $[0,\infty)$ by 
  \begin{align} \label{eq:wh.phi}
    \wh\phi(x) =e^{-\gamma x /\mu}
    U\left(-\frac{\gamma^2}{4\mu^3},\frac12,p^2(x) \right)   
  \end{align}
  is a positive decreasing solution of \eqref{eq:exp-confl} with $\wh\phi(x) \to
  0$ as $x \to \infty$.   
\end{lemma}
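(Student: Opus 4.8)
The plan is to establish the three assertions — that $\wh\phi$ solves \eqref{eq:exp-confl}, that $\wh\phi(x)\to0$, and that $\wh\phi$ is positive and strictly decreasing — in that order, deducing the last (and most substantive) one from an elementary argument on the defining differential equation rather than from fine properties of $U$. Throughout one works with $\gamma>0$ (for $\gamma=0$ the killed process is never killed, hence recurrent, and formula \eqref{eq:wh.phi} degenerates to a positive constant). Since $\gamma>0$, the function $p(x)=\gamma\mu^{-3/2}+\sqrt\mu\,x$ satisfies $p(x)\ge\gamma\mu^{-3/2}>0$ for all $x\ge0$, so $\wh\phi$ — the composition of $U(-\gamma^2/(4\mu^3),\tfrac12,\cdot)$, which is analytic on $(0,\infty)$, with $x\mapsto p^2(x)$ — is well defined and smooth on $[0,\infty)$. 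That $\wh\phi$ solves $\wh A^{\mu,\gamma}f=0$ is then immediate from the transformation \eqref{eq:52}: $f_{\gamma,\mu}=U(-\tfrac{\gamma^2}{4\mu^3},\tfrac12,\cdot)$ is a solution of the Kummer equation \eqref{eq:11} with parameters $a=-\gamma^2/(4\mu^3)$ and $b=\tfrac12$, so $\wh\phi=F_{\gamma,\mu}$ solves \eqref{eq:exp-confl}.

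For the behaviour near infinity I would invoke the asymptotics \eqref{eq:U-asympt}. As $x\to\infty$ we have $p^2(x)\to\infty$, hence $U(-\tfrac{\gamma^2}{4\mu^3},\tfrac12,p^2(x))\sim(p^2(x))^{\gamma^2/(4\mu^3)}=(p(x))^{\gamma^2/(2\mu^3)}$; in particular this is positive for all large $x$, so $\wh\phi(x)>0$ for large $x$, and since it grows only polynomially in $x$ whereas the prefactor $e^{-\gamma x/\mu}$ decays exponentially, $\wh\phi(x)\sim e^{-\gamma x/\mu}(p(x))^{\gamma^2/(2\mu^3)}\to0$.

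It remains to upgrade ``positive near infinity and tending to $0$'' to ``positive on $[0,\infty)$ and strictly decreasing''. For this I would rewrite \eqref{eq:exp-confl} as $\wh\phi''(x)=2x(\mu\wh\phi'(x)+\gamma\wh\phi(x))$ and use the following \emph{propagation principle}: if at some $x_1>0$ one has $\wh\phi(x_1)>0$ and $\wh\phi'(x_1)\ge0$, then $\wh\phi'>0$ on all of $[x_1,\infty)$, so $\wh\phi$ is non-decreasing there, which is incompatible with $\wh\phi(x_1)>0$ and $\wh\phi(x)\to0$. Granting this principle, suppose $\wh\phi$ took a non-positive value somewhere; being positive near infinity it would then have a largest zero $b_0<\infty$, with $\wh\phi>0$ on $(b_0,\infty)$ and $\wh\phi'(b_0)>0$ (it is $\ge0$ because $\wh\phi\ge0$ just to the right of $b_0$, and nonzero since a double zero would force $\wh\phi\equiv0$ by uniqueness for the linear ODE). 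Applying the principle at a point just to the right of $b_0$ produces a contradiction, so $\wh\phi>0$ on all of $[0,\infty)$. Knowing now that $\wh\phi>0$ everywhere, if $\wh\phi'(x_1)\ge0$ for some $x_1>0$ the principle applies directly and gives a contradiction; hence $\wh\phi'<0$ on $(0,\infty)$, and since $\wh\phi'(0)>0$ would likewise trigger the principle just to the right of $0$, we conclude $\wh\phi$ is strictly decreasing on $[0,\infty)$.

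The point needing care — the main obstacle — is justifying the propagation principle, i.e.\ ruling out that $\wh\phi'$ drops back down to $0$ and rebounds. This is handled by observing that at any $x_2>0$ with $\wh\phi(x_2)>0$ and $\wh\phi'(x_2)=0$ the equation gives $\wh\phi''(x_2)=2x_2\gamma\wh\phi(x_2)>0$, so $\wh\phi'$ is strictly increasing through $x_2$; taking $x_2$ to be the first zero of $\wh\phi'$ to the right of $x_1$ then contradicts $\wh\phi'>0$ on $(x_1,x_2)$, noting that $\wh\phi$ remains positive on $[x_1,x_2]$ because it is increasing there. (One could instead deduce the positivity of $U(-\tfrac{\gamma^2}{4\mu^3},\tfrac12,\cdot)$ on the range $[\gamma^2/\mu^3,\infty)$ of $p^2$ from the zero bounds collected in Remark~\ref{rem:confl-funct}, but that requires splitting into cases according to the size of $\gamma^2/\mu^3$, which the ODE argument sidesteps.)
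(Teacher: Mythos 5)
Your proof is correct, but it takes a genuinely different route from the paper at the key step. Both proofs dispose of ``solves \eqref{eq:exp-confl}'' via the substitution \eqref{eq:52} and of ``$\wh\phi>0$ near infinity, $\wh\phi(x)\to0$'' via \eqref{eq:U-asympt}; the difference is how positivity and monotonicity are propagated back to all of $[0,\infty)$. The paper proves the stronger statement that $\wh\phi$ is convex: it writes $\wh\phi''(x)=2\mu x\,\wh\phi'(x)+2\gamma x\,\wh\phi(x)$, uses the differentiation formula \eqref{eq:diff-U} to reduce positivity of $\wh\phi''$ to non-vanishing of $x\mapsto U\bigl(1-\tfrac{\gamma^2}{4\mu^3},\tfrac32,p^2(x)\bigr)$, and then splits into cases on the size of $\tfrac{\gamma^2}{4\mu^3}$, invoking the Kummer transformation \eqref{eq:Kummer}, the zero-count criterion of Remark~\ref{rem:confl-funct}(iv) and Gatteschi's bound \eqref{eq:48}. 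You instead run a self-contained maximum-principle argument on the ODE $\wh\phi''=2x(\mu\wh\phi'+\gamma\wh\phi)$: at an interior critical point with positive value one has $\wh\phi''>0$, so once $\wh\phi>0$ and $\wh\phi'\ge0$ at some $x_1>0$ the function can never turn back down, contradicting $\wh\phi\to0$; combined with the no-double-zero observation this yields positivity and strict monotonicity without any analysis of zeros of confluent hypergeometric functions (your handling of the degenerate points $x=0$, of a possible last zero $b_0$, and of the case $\gamma=0$ is all sound). What each approach buys: yours is more elementary, avoids the case distinctions entirely, and transfers verbatim to other killing rates of the form $c(x)\ge0$ with $c>0$ away from $0$; the paper's computation, on the other hand, delivers as by-products the convexity of $\wh\phi$ and the non-vanishing of $U\bigl(1-\tfrac{\gamma^2}{4\mu^3},\tfrac32,p^2(\cdot)\bigr)$, which are cited again later (in the example constructing $\wh\psi$ and in the lemma on $h_{\mu,\gamma}$), so replacing the paper's proof by yours would require supplying those facts separately.
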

\begin{proof}
  Since $\wh \phi$ is of the form \eqref{eq:52} it is a solution of \eqref{eq:exp-confl}.  
  By \eqref{eq:U-asympt} it is clear that $\wh\phi(x)>0$ for large $x$ and 
  $\wh\phi(x)\to 0$ as $x \to \infty$. If we show that $\wh \phi''(x)>0$ for $x
  >0$, i.e.\ that $\wh\phi$ is convex, then it follows that $\wh\phi$ is
  positive and decreasing on $(0,\infty)$.  
  We have
  \begin{align*}
    \wh\phi''(x)  & =2\mu x \wh\phi'(x)+2\gamma x \wh\phi(x) \\ 
    & = 2 \mu x e^{-\frac{\gamma x}\mu} 2 \sqrt{\mu}
    p(x) U'(-\frac{\gamma^2}{4\mu^3},\frac12,p^2(x))  
    \\ & = 4 \mu^{3/2} x e^{-\frac{\gamma x}\mu} p(x) \frac{\gamma^2}{4\mu^3}
    U(1-\frac{\gamma^2}{4\mu^3},\frac32,p^2(x)),  
  \end{align*}
  where the last step follows by \eqref{eq:diff-U}. Again by \eqref{eq:U-asympt}
  it is clear that $\wh\phi''(x)>0$ for large $x$. Thus, to show that
  $\wh\phi''(x) > 0$ for $x >0$ it is enough to show that this function has no
  positive zeros.

  Using \eqref{eq:U} and the definition of $M$ one can easily compute that
  $U(\frac12,\frac32,p^2(x)) = \frac1{p(x)}$ and $U(0,\frac32,p^2(x))=1$. Thus
  in the case $\frac{\gamma^2}{4\mu^3}\in\{\frac12,1\}$  we have $\wh\phi''(x)
  >0$ for $x \ge 0$.  

  Let us now consider the case $\frac{\gamma^2}{4\mu^3} \in (0,1)
  \setminus\{\frac12\}$. Then by \eqref{eq:55} the number of positive zeros of  $x \mapsto
  U(1-\frac{\gamma^2}{4\mu^3},\frac32,x)$ equals the number of positive zeros of $x \mapsto
  U(\frac12-\frac{\gamma^2}{4\mu^3},\frac12,x)$. Since 
  \begin{align*}
    \frac12-\frac{\gamma^2}{4\mu^3}+1 \ge \frac12,   
  \end{align*}
  by Remark~\ref{rem:confl-funct}(iv), the function $x \mapsto 
  U(1-\frac{\gamma^2}{4\mu^3},\frac32,x)$ has no positive zeros, which implies 
  that $\wh\phi''(x) >0$ for $x >0$.  

  If $\frac{\gamma^2}{4\mu^3} > 1$ then, by \eqref{eq:48} all positive zeros of
  $x \mapsto U(1-\frac{\gamma^2}{4\mu^3},\frac32,x)$ are bounded by $
  \gamma^2/\mu^3-1$. Since the set $\{x \ge 0: p^2(x) < \gamma^2/\mu^3 -1 \}$
  is empty, the function $\wh\phi''$ is positive on $(0, \infty)$.
\end{proof}

In the following example we compute the function $\wh\psi$ in a special case. 
\begin{example}[$\wh \psi(x)$ in a special case] 
  Assume $0< \frac{\gamma^2}{4\mu^3} < \frac12$, let $\wh \phi$ be as in
  \eqref{eq:wh.phi} and define
  \begin{align*}
    \wh \psi_1 (x)  \coloneqq - e^{-\gamma x/\mu}
    M\left(-\frac{\gamma^2}{4\mu^3},\frac12,p^2(x)\right)    
  \end{align*}
  and 
  \begin{align*}
    \wh \psi (x) \coloneqq \wh\psi_1(x) + C\wh\phi(x), \; \text{   with } \;
    C=-\frac{\wh\psi_1'(0)}{\wh\phi'(0)}. 
  \end{align*}
  By the choice of $C$ we have $\wh\psi'(0)=0$ as required.  Furthermore,
  Remark~\ref{rem:confl-funct}(iii) implies that $\wh\psi(x) \to \infty$ for
  $x\to\infty$. To show that $\wh\psi$ satisfies the conditions from
  \eqref{eq:whphiwhpsi} we need to show convexity of $\wh\psi$, i.e.\ positivity of
  $\wh\psi''$ on $[0,\infty)$ and $\wh\psi(0)>0$.  

  Using \eqref{eq:diff-M} we obtain 
  \begin{align} \label{eq:37}
    \begin{split}
      \frac12 \wh\psi''_1 (x) & =\mu x \wh\psi_1'(x)+\gamma x \wh\psi_1(x) = - 2
      \mu^{3/2} x e^{-\frac{\gamma x}\mu}
      p(x)  M'\left(-\frac{\gamma^2}{4\mu^3},\frac12,p^2(x)\right)   \\
      & = \frac{\gamma^2}{\mu^{3/2}} x e^{-\frac{\gamma x}\mu} p(x)
      M\left(1-\frac{\gamma^2}{4\mu^3},\frac32,p^2(x)\right).
    \end{split}
  \end{align}
  By assumption we have $1-\frac{\gamma^2}{4\mu^3} \ge 0$ and therefore 
  Remark~\ref{rem:confl-funct}(iv) implies that the function $x \mapsto 
  M(1-\frac{\gamma^2}{4\mu^3},\frac32,p^2(x))$ has no zeros on $(0,\infty)$.
  Since by  Remark~\ref{rem:confl-funct}(iii) this function is positive for
  large $x$ we obtain $\wh\psi''_1(x)>0$ and $\wh\psi'_1(x) >0$ for $x > 0$.  

  In Lemma~\ref{lem:whphi-decr} we have shown that $\wh\phi''$ is positive and
  $\wh\phi'$ is negative on $(0,\infty)$. Together with positivity of
  $\wh\psi_1''$ and $\wh\psi_1'$ it follows 
  \begin{align*}
    \wh\psi''(x) = \wh\psi_1''(x) - \frac{\wh\psi_1'(0)}{\wh\phi'(0)} \wh\phi''(x)
    >0, \quad x >0.  
  \end{align*}
  Finally we have
  \begin{align*}
    \wh\psi(0) & = \wh\psi_1(0) - \frac{\wh\psi_1'(0)}{\wh\phi'(0)} \wh\phi(0) =
    - \frac{\wh\psi_1'(0)\wh\phi(0) - \wh\psi_1(0)\wh\phi'(0)}{\wh\phi'(0)} = -
    \frac{w(\wh\psi_1,\phi)}{\wh\phi'(0)}.   
  \end{align*}
  Here $w(\wh\psi_1,\wh\phi)= \wh\psi_1'(x)\wh\phi(x) - \wh\psi_1(x)\wh\phi'(x)$
  is a constant independent of $x$, and since $\wh\psi_1'(x)$, $\wh\phi(x)$ and
  $\wh\psi_1(x)\wh\phi'(x)$ are positive for large $x$ and $\wh\phi'(x)$ is
  negative this constant must be positive. Note also that, as is easily
  computed, $w(\wh\psi_1,\wh\phi)=w(\wh\psi,\wh\phi)$. In order this to be one, 
  we would need to normalize $\wh\psi$ by that value. Together with
  $\wh\phi'(0)<0$ it follows $\wh\psi(0)>0$ as required. \qed
\end{example}

\begin{lemma}[Upper bounds of $\wh\phi(x)$ and $\wh\psi(x)$] 
  Let $\wh\psi$ and $\wh\phi$ be as defined in \eqref{eq:psiphi} and
   \eqref{eq:wh.phi}. For any positive $\gamma$ and $\mu$ there are
  finite positive $K_1$ and $K_2$ such that  
  \begin{align}\label{eq:27} 
    \wh\psi(x) \le K_1 e^{\gamma x/\mu + \mu x^2} \quad \text{ and }
    \quad \wh\phi(x) \le K_2 e^{-\gamma x/\mu}.
  \end{align}
\end{lemma}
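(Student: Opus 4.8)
The plan is to reduce both inequalities in \eqref{eq:27} to the behaviour as $x\to\infty$, where they follow from the asymptotic formulas \eqref{eq:U-asympt} and \eqref{eq:M-asympt} combined with the elementary identity
\begin{align*}
  p^2(x) = \frac{\gamma^2}{\mu^3} + \frac{2\gamma}{\mu}\,x + \mu x^2, \qquad x \ge 0,
\end{align*}
which is immediate from the definition \eqref{eq:40} of $p$. First I would note that $\wh\phi$ and $\wh\psi$ are continuous and positive on $[0,\infty)$ — for $\wh\phi$ by Lemma~\ref{lem:whphi-decr}, and for $\wh\psi=e^{-\gamma x/\mu}\wh M$ because $\wh M$ is a fixed linear combination of the smooth functions in \eqref{eq:30} (resp.\ \eqref{eq:32} when $\gamma^2/(4\mu^3)\in\{1,2,\dots\}$) and $\wh\psi$ is constructed to be positive. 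It therefore suffices, for each function, to establish the claimed bound for all sufficiently large $x$: on the remaining compact initial segment both functions are bounded while the right-hand sides of \eqref{eq:27} are bounded below by positive constants, so the inequalities extend after enlarging $K_1$ and $K_2$.

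For $\wh\phi$ I would use \eqref{eq:wh.phi} to write
\begin{align*}
  \wh\phi(x)\,e^{\gamma x/\mu} = U\Bigl(-\tfrac{\gamma^2}{4\mu^3},\,\tfrac12,\,p^2(x)\Bigr).
\end{align*}
Since $p^2(x)\to\infty$, \eqref{eq:U-asympt} makes the right-hand side asymptotic to $(p^2(x))^{\gamma^2/(4\mu^3)}$, hence of at most polynomial growth in $x$, and this gives $\wh\phi(x)\le K_2\,e^{-\gamma x/\mu}$.

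For $\wh\psi$ I would bound $\wh M(x)=\wh\psi(x)\,e^{\gamma x/\mu}$ summand by summand. Both summands of $\wh M$ are, in absolute value, at most a constant multiple of $e^{p^2(x)}$ on $[0,\infty)$: for the $U$-summand this is immediate from \eqref{eq:U-asympt} (polynomial growth is trivially dominated by $e^{p^2(x)}$), and for the $M$-summand it follows from \eqref{eq:M-asympt} because the relevant index $a-b$ — with $(a,b)=(-\gamma^2/(4\mu^3),\tfrac12)$, or $(a,b)=(-\gamma^2/(4\mu^3)+\tfrac12,\tfrac32)$ in the integer case — is strictly negative and $p^2(x)\ge\gamma^2/\mu^3>0$, so the algebraic prefactor $(p^2(x))^{a-b}$, even after the extra factor $p(x)$ that appears in the integer case, stays bounded. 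Since $e^{p^2(x)}=e^{\gamma^2/\mu^3}\,e^{2\gamma x/\mu+\mu x^2}$ and $\wh\psi>0$, multiplying through by $e^{-\gamma x/\mu}$ yields $\wh\psi(x)\le K_1\,e^{\gamma x/\mu+\mu x^2}$.

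The step I would be most careful with is passing from the asymptotic equivalences \eqref{eq:U-asympt}, \eqref{eq:M-asympt} to genuine one-sided bounds valid on all of $[0,\infty)$; this is handled by continuity and positivity as in the first paragraph. The remaining points — verifying that $a-b<0$ in every branch of the definition of $\wh M$ (it is, since $a<0$ and $b\ge\tfrac12$), and checking that the case $\gamma^2/(4\mu^3)\in\{1,2,\dots\}$ goes through exactly as the non-integer one — are routine, and I do not expect any real obstacle.
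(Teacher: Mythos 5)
Your overall strategy---continuity and positivity on a compact initial segment plus the asymptotics \eqref{eq:M-asympt} and \eqref{eq:U-asympt} at infinity, applied to the argument $p^2(x)=\gamma^2/\mu^3+2\gamma x/\mu+\mu x^2$---is exactly the route the paper takes, and your treatment of $\wh\psi$ (bounding each summand of $\wh M$ by a constant multiple of $e^{p^2(x)}$ and multiplying by $e^{-\gamma x/\mu}$) reproduces the paper's argument for the first inequality in \eqref{eq:27}, including the integer case \eqref{eq:32}. That half is fine.

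The step for $\wh\phi$, however, is a non sequitur as written. You correctly observe that $\wh\phi(x)e^{\gamma x/\mu}=U\bigl(-\tfrac{\gamma^2}{4\mu^3},\tfrac12,p^2(x)\bigr)\sim\bigl(p^2(x)\bigr)^{\gamma^2/(4\mu^3)}$ grows polynomially in $x$, but a polynomially growing quantity cannot be dominated by a constant $K_2$, so the conclusion $\wh\phi(x)\le K_2e^{-\gamma x/\mu}$ does not follow from this---and in fact no constant works: for instance, when $\tfrac{\gamma^2}{4\mu^3}=\tfrac12$ one has $U\bigl(-\tfrac12,\tfrac12,p^2(x)\bigr)=p(x)$, hence $\wh\phi(x)=e^{-\gamma x/\mu}p(x)$, which exceeds $K_2e^{-\gamma x/\mu}$ for all large $x$. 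To be fair, the paper's own proof disposes of this part in one sentence (``follows from the definition of $\wh\phi$ and \eqref{eq:U-asympt}'') and glosses over the same point; what your argument actually establishes, and what should be stated, is a bound with a polynomial correction, e.g.\ $\wh\phi(x)\le K_2(1+x)^{\gamma^2/(2\mu^3)}e^{-\gamma x/\mu}$, or equivalently a slightly weakened exponent such as $K_2e^{-\gamma x/(2\mu)}$. Either version suffices for the subsequent applications in Lemma~\ref{lem:exp.m.killing.OUP} and Lemma~\ref{lem:kill-time-OU}, where the Gaussian factors $e^{-\mu x^2}$ absorb polynomial growth; but as it stands your deduction of the second inequality in \eqref{eq:27} has a genuine gap (inherited from the statement itself), and you should either carry the polynomial factor or adjust the exponent rather than assert the bound with a constant.
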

\begin{proof} 
  Assume first that $\frac{\gamma^2}{4\mu^3} \ne 1, 2,\dots$. Then two
  independent solutions of the equation \eqref{eq:11} are given by
  \eqref{eq:30}. In view of \eqref{eq:M-asympt}, \eqref{eq:U-asympt},
  \eqref{eq:39} and \eqref{eq:35} it is clear that any linear combination of the
  functions from \eqref{eq:30} multiplied by $e^{-\gamma x/\mu}$ is bounded on
  any interval of the form $[0,t]$ for $t < \infty$. Furthermore, as $x \to
  \infty$, it grows at most as a constant times $e^{-\gamma x/\mu}e^{p^2(x)} =
  e^{\gamma^2/\mu^3 + \gamma x/\mu + \mu x^2}$ from which the first part of the
  assertion follows.   If $\frac{\gamma^2}{4\mu^3} = 1, 2,\dots$, then two
  independent solutions of the equation \eqref{eq:11} are given by the functions
  in \eqref{eq:32}. Using similar arguments as above one can show the first part
  of the assertion also in this case.  

  The second part of the assertion follows from the definition of $\wh\phi$ and
  \eqref{eq:U-asympt}. 
\end{proof}

We set 
\begin{align} 
  \label{eq:tildephiOU}
  \wh\Phi(x) & = e^{-\mu x^2}\wh\phi(x),  \\  
  \label{eq:tildepsiOU} 
  \wh\Psi(x) &  = e^{-\mu x^2} \wh\psi(x),  
\end{align}
and note that $\wh\Phi$ and $\wh\Psi$ solve the differential equation
\begin{align}
  \label{eq:tildeeq}
  \frac12 u''(x)=(\gamma x -\mu) u(x) - \mu x u'(x).   
\end{align}
From \eqref{eq:27} it follows 
\begin{align} \label{eq:43} 
  \wh\Psi(x) \le K_1 e^{\gamma x/\mu} \qquad  \text{ and } \qquad\wh\Phi(x) 
  \le K_2 e^{-\gamma x/\mu-\mu x^2}.  
\end{align}
Furthermore, 
\begin{align}
  \label{eq:45}
  \wh\phi(x)\wh \Psi'(x) - \wh \psi(x) \wh \Phi'(x) = \wh\phi(x)\wh \psi'(x) -
  \wh \psi(x) \wh \phi'(x) = 1 
\end{align}
and
\begin{align}
  \label{eq:47}
  \wh\Psi'(0) =0. 
\end{align}

\begin{remark}[Density of the killing position and expected killing
  time] \label{rem:dens-kill-pos-OU} \leavevmode%
  \\ 
  As in Remark~\ref{rem:exp-kill} in the Brownian case, the density of the
  position at killing time of the killed Ornstein-Uhlenbeck process starting in
  $x$  is given by      
  \begin{align}
    \label{equ:density}
    f_x(y)  = G(x,y) k(y) = G(x,y) 2\gamma ye^{- \mu y^2}.   
  \end{align}
  The expected killing time is given by 
  \begin{align}
    \label{eq:38}
    \begin{split}
      \E_x[\wh\tau]& = \int_0^\infty G(x,y) m(y)\, dy = \int_0^\infty G(x,y)
      2e^{-\mu y^2} \, dy \\ & = 2\left(\wh \phi(x) \int_0^x \wh\Psi(y)\,dy +
        \wh\psi(x) \int_x^\infty \wh\Phi(y)\, dy \right).
    \end{split}
  \end{align}
\end{remark}

\begin{lemma}[Exponential moments of the killing position]  
  \label{lem:exp.m.killing.OUP} \leavevmode \\    
  For $\alpha < \gamma/\mu$ and  any $x \ge 0$  we have
  \begin{align}
    \label{eq:exp.m.killing.OUP}
    \mathbb E_x [e^{\alpha X_{\tau-}}] < \infty.  
  \end{align}  
\end{lemma}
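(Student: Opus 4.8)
The plan is to mirror the proof of Lemma~\ref{lem:exp.m.killing} from the Brownian case, with the Airy estimates replaced by the exponential bounds \eqref{eq:43} on $\wh\Phi$ and $\wh\Psi$. First I would express the quantity of interest through the density of the killing position given in \eqref{equ:density}: writing $\tau$ for the killing time of the killed reflecting Ornstein--Uhlenbeck process started in $x$,
\begin{align*}
  \E_x\bigl[e^{\alpha \wh Z^K_{\tau-}}\bigr] = \int_0^\infty e^{\alpha y}\, G(x,y)\, 2\gamma y e^{-\mu y^2}\, dy .
\end{align*}
Splitting the integral at $y=x$ according to \eqref{D:OPUgreenfunction} and absorbing the factor $e^{-\mu y^2}$ into $\wh\phi,\wh\psi$ via \eqref{eq:tildephiOU}--\eqref{eq:tildepsiOU} gives
\begin{align*}
  \E_x\bigl[e^{\alpha \wh Z^K_{\tau-}}\bigr] = 2\gamma\,\wh\phi(x)\int_0^x e^{\alpha y} y\,\wh\Psi(y)\,dy + 2\gamma\,\wh\psi(x)\int_x^\infty e^{\alpha y} y\,\wh\Phi(y)\,dy .
\end{align*}

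The first term is finite because its integrand is continuous on the compact interval $[0,x]$; if one wants an explicit estimate one may insert $\wh\Psi(y)\le K_1 e^{\gamma y/\mu}$ from \eqref{eq:43} and integrate $y e^{(\alpha+\gamma/\mu)y}$ over $[0,x]$. The hypothesis $\alpha<\gamma/\mu$ is used only for the tail term, and this is the one step that requires any care. There I would set $y^{*}\coloneqq\max_{y\ge 0}\{y e^{-(\gamma/\mu-\alpha)y}\}$, which is finite precisely because $\alpha<\gamma/\mu$, and then estimate
\begin{align*}
  e^{\alpha y} y\,\wh\Phi(y) = \bigl(y e^{-(\gamma/\mu-\alpha)y}\bigr)\, e^{\gamma y/\mu}\wh\Phi(y) \le y^{*}\, e^{\gamma y/\mu}\wh\Phi(y) \le y^{*} K_2\, e^{-\mu y^2},
\end{align*}
where the last inequality is the bound on $\wh\Phi$ from \eqref{eq:43}. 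Integrating the Gaussian over $[x,\infty)$ then produces a finite constant, so multiplying by $2\gamma\,\wh\psi(x)$ shows the tail term is finite; combining with the first term gives \eqref{eq:exp.m.killing.OUP}.

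I do not expect a genuine obstacle here: everything reduces to the bounds \eqref{eq:27}/\eqref{eq:43} established in the two preceding lemmas and to the elementary fact that $y\mapsto y e^{-(\gamma/\mu-\alpha)y}$ is bounded on $[0,\infty)$ when $\alpha<\gamma/\mu$. It is worth noting, in contrast with the Brownian ratchet, that the Gaussian factor $e^{-\mu y^2}$ in the killing measure already forces convergence of the tail integral for \emph{every} real $\alpha$; the restriction $\alpha<\gamma/\mu$ is retained only so that the estimate takes the same shape as in Lemma~\ref{lem:exp.m.killing}, which is the convenient form for the moment computations that follow.
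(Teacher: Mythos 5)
Your proof is correct and follows essentially the same route as the paper's: write $\E_x[e^{\alpha \wh Z^K_{\tau-}}]$ via the killing density $G(x,y)k(y)$ from \eqref{equ:density}, split at $y=x$, and apply the bounds \eqref{eq:27}/\eqref{eq:43}; the paper merely estimates both integrals directly to obtain a bound of the form $\wt C(\gamma,\mu)\, x e^{\alpha x}$ instead of invoking $y^{*}$. Your closing observation is also accurate: the Gaussian factor $e^{-\mu y^2}$ makes the tail integral converge for every real $\alpha$, so the restriction $\alpha<\gamma/\mu$ is not actually needed for finiteness in the Ornstein--Uhlenbeck case.
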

\begin{proof}
  We have 
  \begin{align*}
    \E_x [e^{\alpha X_{\tau-}}] & = \int_0^\infty e^{\alpha y} G(x,y) 2 \gamma y
    e^{-2\mu y^2}  dy \\  
    & = 2 \gamma \left[\wh\phi(x) \int_0^x y e^{ \alpha y}\wh\Psi(y) \, 
      dy + \wh\psi(x) \int_x^\infty y e^{\alpha y} \wh\Phi(y) \, dy \right].
  \end{align*}
  Using \eqref{eq:27} and \eqref{eq:43} we see that the term in the brackets is bounded by
 \begin{align*}
   K_1 K_2 & \Bigl[ e^{-\gamma x/\mu} \int_0^x y e^{\gamma y/\mu + \alpha y} \, dy + 
   e^{\gamma x/\mu + \mu x^2} \int_x^\infty y e^{-\gamma y/\mu-\mu y^2+\alpha
     y}\, dy \Bigr] 
 \end{align*}
 which itself can be bounded by $\wt C(\gamma,\mu) x e^{\alpha x} $ for some
 constant $\wt C(\gamma,\mu)$ independent of $x$.    
\end{proof}

\begin{lemma}[Expected killing position starting from $x$]
  \label{lem:kil-pos-OU} \leavevmode \\
  There is a positive finite constant $\wh c$ such that for all $x\geq 0$ 
  \begin{align}
    \label{eq:46}
    \mathbb E_x[\wh Z_{\wh\tau-}] \le x+\wh c.   
  \end{align}
\end{lemma}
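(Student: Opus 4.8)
The plan is to imitate the proof of Lemma~\ref{lem:E_x<x+c} almost line by line, with the pair $(\wh\phi,\wh\psi)$ playing the role of $(\phi,\psi)$ and the differential equation \eqref{eq:tildeeq} replacing \eqref{eq:5}. By \eqref{equ:density} together with the definitions \eqref{eq:tildephiOU}--\eqref{eq:tildepsiOU} of $\wh\Phi$ and $\wh\Psi$ and the form \eqref{D:OPUgreenfunction} of the Green function,
\[
  \E_x[\wh Z_{\wh\tau-}] = \int_0^\infty y\, G(x,y)\, 2\gamma y e^{-\mu y^2}\, dy
  = 2\gamma\Bigl(\wh\phi(x)\int_0^x y^2\wh\Psi(y)\, dy + \wh\psi(x)\int_x^\infty y^2\wh\Phi(y)\, dy\Bigr).
\]

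Next I would use that $u=\wh\Phi$ and $u=\wh\Psi$ both solve \eqref{eq:tildeeq}, which can be rewritten as $2\gamma x u(x) = u''(x) + 2\mu\bigl(x u(x)\bigr)'$, and hence, after multiplying by $x$, as $2\gamma y^2 u(y) = y u''(y) + 2\mu y^2 u'(y) + 2\mu y u(y)$. Substituting this into the two integrals above and integrating by parts, the boundary terms at the origin drop out because of the factors $y$ and $y^2$; and once the boundary terms at infinity are shown to vanish (see below), collecting the surviving boundary terms at $x$ via the Wronskian identity \eqref{eq:45}, the relation $\wh\Psi(x)\wh\phi(x)=\wh\psi(x)\wh\Phi(x)$ (immediate from \eqref{eq:tildephiOU}--\eqref{eq:tildepsiOU}) and $\wh\Psi(0)=\wh\psi(0)$, one arrives at
\[
  \E_x[\wh Z_{\wh\tau-}] = x + \wh\phi(x)\wh\psi(0) - 2\mu\Bigl(\wh\phi(x)\int_0^x y\wh\Psi(y)\, dy + \wh\psi(x)\int_x^\infty y\wh\Phi(y)\, dy\Bigr).
\]
Since $\wh\phi,\wh\psi,\wh\Phi,\wh\Psi\ge 0$ the last term is $\le 0$, and since $\wh\phi$ is decreasing $\wh\phi(x)\le\wh\phi(0)$; hence $\E_x[\wh Z_{\wh\tau-}]\le x+\wh\phi(0)\wh\psi(0)$ and the lemma holds with $\wh c\coloneqq\wh\phi(0)\wh\psi(0)\in(0,\infty)$.

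The only step requiring genuine care — the rest being the mechanical integration-by-parts bookkeeping already carried out in Lemma~\ref{lem:E_x<x+c} — is the vanishing of the boundary terms at infinity and the finiteness of $\int_x^\infty y^2\wh\Phi(y)\,dy$ and $\int_x^\infty y\wh\Phi(y)\,dy$. These follow from the super-exponential decay $\wh\Phi(y)\le K_2 e^{-\gamma y/\mu-\mu y^2}$ in \eqref{eq:43} together with a bound on $\wh\phi'$: the function $\wh\phi$ is positive, decreasing and, as established in the proof of Lemma~\ref{lem:whphi-decr}, convex, so $\wh\phi'$ is non-positive and non-decreasing, whence $\wh\phi'(0)\le\wh\phi'(y)\le 0$ for all $y\ge 0$; consequently $\wh\Phi'(y)=e^{-\mu y^2}\bigl(\wh\phi'(y)-2\mu y\wh\phi(y)\bigr)$ also decays super-exponentially, so that $y\wh\Phi'(y)\to 0$ and $y^2\wh\Phi(y)\to 0$ as $y\to\infty$ and the integrations by parts on $[x,\infty)$ are justified. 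Note that, unlike in the Brownian case, the surviving integral term here carries an extra factor $y$ and is therefore not a multiple of $\E_x[\wh\tau]$; but for the present statement it suffices that this term has a definite sign, which it does.
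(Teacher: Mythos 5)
Your proposal is correct and follows essentially the same route as the paper: the Green-function representation of $\E_x[\wh Z_{\wh\tau-}]$, the rewritten ODE \eqref{eq:tildeeq}, integration by parts, the Wronskian identity \eqref{eq:45} together with $\wh\psi\wh\Phi=\wh\phi\wh\Psi$ and $\wh\Psi(0)=\wh\psi(0)$, and monotonicity of $\wh\phi$. The only (harmless) deviation is at the end: you drop the leftover term $-2\mu\bigl(\wh\phi(x)\int_0^x y\wh\Psi+\wh\psi(x)\int_x^\infty y\wh\Phi\bigr)$ by a sign argument, whereas the paper applies the ODE a second time to evaluate it, arriving at the exact identity $\E_x[\wh Z_{\wh\tau-}]=x-\tfrac{\mu}{\gamma}+\wh\phi(x)\wh\Psi(0)$; both give the bound, and your explicit justification of the vanishing boundary terms at infinity is a point the paper leaves implicit.
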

\begin{proof}
We have 
\begin{align*}
  \mathbb E_x[\wh Z_{\wh\tau-}] & = 2 \gamma \int_0^\infty  y^2 G(x,y) e^{- \mu 
    y^2} \, dy\\  
  &= \wh{\phi}(x)\int_0^x 2 \gamma y^2 \wh{\Psi}(y) \, dy   
    +  \wh\psi(x) \int_x^\infty 2 \gamma y^2  \wh\Phi(y) \, dy. 
\end{align*}
Let us first consider the two integrals. The functions $\wh\Phi$ and $\wh\Psi$
satisfy \eqref{eq:tildeeq}, which can be rewritten as 
\begin{align}
  \label{eq:44}
  2\gamma x u(x) = u''(x) + 2\mu x u'(x) + 2 \mu u(x).  
\end{align}
Using this (twice in each computation) and partial integration we obtain  
\begin{align*} 
  \int_0^x 2 \gamma y^2 \wh{\Psi}(y) \, dy & = \int_0^x y \left(\wh{\Psi}''(y) +
    2\mu y \wh{\Psi}'(y) + 2\mu \wh{\Psi}(y) \right)\,  dy  \\
  & = x \wh{\Psi}'(x)- \wh{\Psi}(x)+ \wh{\Psi}(0)  + 2\mu x^2 \wh{\Psi}(x)
  -\frac{\mu}{\gamma} \wh{\Psi}'(x) - \frac{2\mu^2}{\gamma} x \wh{\Psi}(x) 
\end{align*}
and
\begin{align*}
  \int_x^\infty 2\gamma y^2  \wh{\Phi}(y) \, dy  & = \int_x^\infty y
  \left(\wh{\Phi}''(y) + 2 \mu y \wh{\Phi}'(y) + 2\mu \wh{\Phi}(y) \right) \, dy
  \\ & = -  x \wh{\Phi}'(x) + \wh{\Phi}(x) - 2\mu  x^2 \wh{\Phi}(x)+
  \frac{\mu}{\gamma} \wh{\Phi}'(x) + \frac{2\mu^2}{\gamma} x \wh{\Phi}(x). 
\end{align*}
Now from $\wh \psi \wh\Phi = \wh\phi \wh\Psi$, \eqref{eq:45} and \eqref{eq:47}
it follows  
\begin{align*}
  \mathbb E_x[Z_{\wh\tau-}]&= (x-\frac\mu\gamma) \left(\wh{\phi}(x)\wh\Psi'(x)
    -\wh\psi(x) \wh\Phi (x)\right)  + \wh\phi(x) \wh\Psi(0) + \frac\mu\gamma
    \wh\phi(x) \wh\Psi'(0) \\ & =  x-\frac\mu\gamma  + \wh\phi(x) \wh\Psi(0) 
    \le x + \frac\mu\gamma  +  \wh\phi(0) \wh\Psi(0) 
  \end{align*}
  where the last inequality follows because $\wh\phi$ is 
  decreasing. This concludes the proof. 
\end{proof}

\begin{lemma}[Second moment of the killing time] 
  \label{lem:kill-time-OU} 
  For all $x \ge 0$ we have $\E_x[\wh\tau^2] < \infty$.    
\end{lemma}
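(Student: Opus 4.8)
The plan is to mimic the proof of Lemma~\ref{lem:2nd.mom.kill.time}: first I would bound the expected killing time $\E_z[\wh\tau]$ \emph{uniformly} in the starting point $z\ge 0$, and then feed this bound into Kac's moment formula to control $\E_x[\wh\tau^2]$.

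For the uniform first–moment bound I would start from \eqref{eq:38},
\[
  \E_z[\wh\tau] = 2\Bigl(\wh\phi(z)\int_0^z \wh\Psi(y)\,dy + \wh\psi(z)\int_z^\infty \wh\Phi(y)\,dy\Bigr),
\]
and insert the exponential estimates \eqref{eq:27} and \eqref{eq:43}. The first summand is easy: $\wh\phi(z)\le K_2 e^{-\gamma z/\mu}$ and $\wh\Psi(y)\le K_1 e^{\gamma y/\mu}$ give $\wh\phi(z)\int_0^z\wh\Psi(y)\,dy\le K_1K_2\,\tfrac{\mu}{\gamma}$. For the second summand I would use $y^2\ge zy$ for $y\ge z\ge 0$ to get the crude tail bound $\int_z^\infty \wh\Phi(y)\,dy \le K_2\int_z^\infty e^{-\gamma y/\mu-\mu z y}\,dy = K_2\,(\gamma/\mu+\mu z)^{-1}e^{-\gamma z/\mu-\mu z^2}\le K_2\,\tfrac{\mu}{\gamma}\,e^{-\gamma z/\mu-\mu z^2}$, and combine it with $\wh\psi(z)\le K_1 e^{\gamma z/\mu+\mu z^2}$ so that the super-Gaussian factors cancel and $\wh\psi(z)\int_z^\infty\wh\Phi(y)\,dy\le K_1K_2\,\tfrac{\mu}{\gamma}$. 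Hence there is a finite $\bar\tau=\bar\tau(\mu,\gamma)$ with $\sup_{z\ge 0}\E_z[\wh\tau]\le\bar\tau$.

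Then I would apply Kac's moment formula \citep[][(5) on p.~119]{0962.60067} exactly as in Lemma~\ref{lem:2nd.mom.kill.time}, now with the Green function \eqref{D:OPUgreenfunction} and the speed measure $m(dx)=2e^{-\mu x^2}\,dx$ of the killed reflecting Ornstein-Uhlenbeck process:
\[
  \E_x[\wh\tau^2] = 2\int_0^\infty G(x,z)m(z)\Bigl(\int_0^\infty G(z,y)m(y)\,dy\Bigr)dz = 2\int_0^\infty G(x,z)m(z)\,\E_z[\wh\tau]\,dz,
\]
where I used \eqref{eq:38} to identify the inner integral with $\E_z[\wh\tau]$. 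Bounding that inner expectation by $\bar\tau$ and recognising $\int_0^\infty G(x,z)m(z)\,dz=\E_x[\wh\tau]\le\bar\tau$ again yields $\E_x[\wh\tau^2]\le 2\bar\tau^2<\infty$, uniformly in $x\ge 0$.

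The only point requiring care is the uniform-in-$z$ estimate of $\wh\psi(z)\int_z^\infty\wh\Phi(y)\,dy$: the growth $e^{\gamma z/\mu+\mu z^2}$ of $\wh\psi$ must be beaten by the Gaussian-type decay of the tail of $\wh\Phi$, which is why the elementary tail bound above (using $y^2\ge zy$) is needed, and not merely the pointwise bounds \eqref{eq:27} and \eqref{eq:43}. In contrast to the Brownian ratchet, no separate stochastic-domination argument is needed here, since the resulting bound on $\E_z[\wh\tau]$ — and hence on $\E_x[\wh\tau^2]$ — is already uniform in the initial position.
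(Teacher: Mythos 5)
Your proposal is correct and follows essentially the same route as the paper: a uniform-in-the-starting-point bound on $\int_0^\infty G(x,y)m(y)\,dy=\E_x[\wh\tau]$ derived from the estimates \eqref{eq:27} and \eqref{eq:43}, then fed into Kac's moment formula. The only differences are cosmetic: you control the Gaussian tail via the elementary inequality $y^2\ge zy$ instead of the tail estimate the paper quotes from Abramowitz--Stegun, and since your bound is uniform in the initial position you can skip the paper's preliminary reduction to the start in $0$ by stochastic domination.
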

\begin{proof}
  As in the proof of Lemma~\ref{lem:2nd.mom.kill.time} it is enough to show that
  $\mathbb E_0[\wh\tau^2]$ is finite. We have   
  \begin{align} \label{eq:31} 
    \mathbb E_0[\wh\tau^2] & = 2 \int_0^\infty G(0,x) m(x) \int_0^\infty G(x,y) 
    m(y) \, dy \, dx. 
  \end{align}
  It follows that (recall the definition of $m$ in \eqref{eq:speed-killing-measure}) 
\begin{align*}
  \int_0^\infty G(x,y) m(y) \, dy & = \frac1\omega\wh\phi(x) \int_0^x \wh\psi(y) m(y)\,dy +
  \frac1\omega\wh\psi(x) \int_x^\infty \wh\phi(y) m(y) \,dy \\ 
  & \le \frac2\omega \wh\phi(x) K_1 \int_0^x e^{\gamma y/\mu}\, dy + \frac2\omega \wh\psi(x)K_2
  \int_x^\infty e^{-\gamma y/\mu - \mu y^2}\, dy \\ 
  & \le \frac2\omega \wh\phi(x) K_1 \int_0^x e^{\gamma y/\mu}\, dy + \frac2\omega \wh\psi(x)K_2
  e^{-\gamma x/\mu} \int_x^\infty e^{ - \mu y^2}\, dy.   
\end{align*}
Now using an estimate for $x \ge 0$ 
\begin{align*}
  \int_x^\infty e^{-\mu y^2} \, dy \le \frac1{\mu x+\sqrt{\mu}\sqrt{\mu
      x^2+4/\pi}} e^{-\mu x^2}  \le \frac{\sqrt{\pi}}{2 \sqrt \mu} e^{-\mu x^2} 
\end{align*}
(which can be deduced from \citep[][7.1.13]{AbramowitzStegun:1972}), 
and \eqref{eq:27} we obtain after simple calculations 
\begin{align*}
  \int_0^\infty G(x,y) m(y) \, dy   & \le \frac2\omega K_1K_2 ( e^{-\gamma
    x/\mu} \frac{\mu}{\gamma}(e^{\gamma x/\mu}-1) + \frac{\sqrt{\pi}}{2 \sqrt
    \mu} e^{\gamma x/\mu + \mu x^2} e^{-\gamma x/\mu - \mu x^2} ) \\  
    & \le C(\gamma,\mu), 
\end{align*}
for suitably chosen constant $C(\gamma,\mu)$ depending on $\gamma$ and $\mu$
but independent of $x$. Putting this into equation \eqref{eq:31} we get 
\begin{align*}
  \E_0[\wh\tau^2] \le 2 C^2(\gamma,\mu)   
\end{align*}
and the proof is completed. 
\end{proof}

\subsection{Invariant Distribution at jump times} 
\label{OUP:invariant distribution}

Also for the Ornstein-Uhlenbeck ratchet we consider the Markov chain of
increments at jump times.  
\begin{definition}[Markov chain at jump times] \label{eq:mkjt-ou} Let $(\wh\XX,
  \wh\RR)$ be a $(\gamma,\mu)$-Ornstein-Uhlenbeck ratchet with sequence of jump
  times of $\wh\RR$ given by $(\wh\tau_n)_{n\ge 0}$. At jump times we define the
  Markov chain $(\wh\YY,\wh\WW,\wh\eta) \coloneqq (\wh Y_n,\wh W_n,\wh
  \eta_n)_{n\ge 1} $ by
  \begin{align}
    \label{eq:mc.inc.OU}
    \wh Y_n= \wh X_{\wh \tau_n}-\wh R_{\wh \tau_n}, && \wh W_n = \wh
    R_{\wh\tau_n}-\wh R_{\wh \tau_{n-1}} && \text{and} &&  \wh \eta_n = \wh
    \tau_n-\wh\tau_{n-1}.  
  \end{align}
\end{definition}
Now, using the moment bounds in Lemma~\ref{lem:exp-mom-coupl-OUR},
Lemma~\ref{lem:kil-pos-OU} and Lemma~\ref{lem:kill-time-OU} the next result
follows as in the case of the Brownian ratchet. 
\begin{proposition}
  \label{prop:ex-uniq-inv.OUP}  
  There exists a unique invariant distribution of the Markov chain $(\mathcal Y,
  \mathcal W, \eta)$.   
\end{proposition}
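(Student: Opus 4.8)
The plan is to run the proof of Proposition~\ref{prop:ex-uniq-inv} once more, now feeding in Lemma~\ref{lem:exp-mom-coupl-OUR}, Lemma~\ref{lem:kil-pos-OU} and Lemma~\ref{lem:kill-time-OU} in place of Lemma~\ref{lem:coupling}, Lemma~\ref{lem:E_x<x+c} and Lemma~\ref{lem:2nd.mom.kill.time}, respectively. For \emph{uniqueness} I would note that the law of $(\wh Y_n,\wh W_n,\wh\eta_n)_{n\ge k+1}$ depends on the past only through $\wh Y_k$, so it suffices to compare two copies of the chain started from arbitrary deterministic states $x_0\ge x_0'\ge 0$. Using the coupled Ornstein-Uhlenbeck ratchet constructed after Lemma~\ref{lem:gr-constr-OUR}, Lemma~\ref{lem:exp-mom-coupl-OUR} gives $\wh T_{\tn{coupl}}<\infty$ almost surely, and beyond this time the two ratchets have identical jump times and identical temporal and spatial increments. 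Hence, after an almost surely finite random shift of the index, the two chains coincide, which forces any two invariant distributions to agree; the bookkeeping with the index shift is carried out exactly as in the proof of Proposition~5.6 in \citep{DepperschmidtPfaffelhuber:2010}.

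For \emph{existence} I would show that the first moments of $(\wh Y_n,\wh W_n,\wh\eta_n)$ are bounded uniformly in $n$. This gives tightness of the laws on $[0,\infty)^3$, hence tightness of their Ces\`aro averages, and every weak subsequential limit of the latter is an invariant distribution of the chain. Conditionally on $\wh Y_{n-1}$ (with $\wh Y_0\coloneqq x_0$), the increment $\wh\eta_n=\wh\tau_n-\wh\tau_{n-1}$ is distributed as the killing time $\wh\tau$ of the killed reflecting Ornstein-Uhlenbeck process started from $\wh Y_{n-1}$; since that killing time is stochastically dominated by the one started from $0$, the bound of Lemma~\ref{lem:kill-time-OU} on $\E_x[\wh\tau^2]$ is uniform in $x$, so $\sup_n\E[\wh\eta_n^2]<\infty$ and a fortiori $\sup_n\E[\wh\eta_n]<\infty$. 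Writing $\wh Y_n=U_n\,\wh Z_{\wh\tau-}$ with $U_n$ uniform on $(0,1)$ and independent of the killed reflecting Ornstein-Uhlenbeck process started from $\wh Y_{n-1}$, Lemma~\ref{lem:kil-pos-OU} yields $\E[\wh Y_n\mid\wh Y_{n-1}]\le\tfrac12(\wh Y_{n-1}+\wh c)$, whence $\E[\wh Y_n]\le\tfrac12\E[\wh Y_{n-1}]+\tfrac12\wh c$ and, iterating, $\E[\wh Y_n]\le x_0+\wh c$ for every $n$. Finally $\wh W_n=(1-U_n)\,\wh Z_{\wh\tau-}$ has the same distribution as $\wh Y_n$, since $U_n$ is symmetric and independent of the killed process, so the same bound holds for $\wh W_n$.

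The passage from tightness of the Ces\`aro averages to the existence of an invariant distribution is the usual Krylov--Bogolyubov argument; it uses only a mild (Feller-type) continuity of the one-step transition kernel, which follows from continuity in the starting point of the joint law of the killed reflecting Ornstein-Uhlenbeck endpoint and its killing time, and the details are as in the proof of Proposition~5.6 in \citep{DepperschmidtPfaffelhuber:2010}. I expect the only point that needs genuine attention to be the verification that the bounds from Lemmas~\ref{lem:kil-pos-OU} and~\ref{lem:kill-time-OU} are uniform in the starting point --- this is exactly what makes the contraction recursion for $\E[\wh Y_n]$ and the tightness argument go through --- but there is no new conceptual obstacle compared with the Brownian case.
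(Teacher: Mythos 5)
Your proposal is correct and follows the same route as the paper: uniqueness from the successful coupling of Lemma~\ref{lem:exp-mom-coupl-OUR}, and existence from tightness via the uniform-in-$x$ moment bounds of Lemmas~\ref{lem:kil-pos-OU} and~\ref{lem:kill-time-OU}, with the remaining details as in Proposition~5.6 of \citep{DepperschmidtPfaffelhuber:2010}. In fact you spell out the iteration $\E[\wh Y_n]\le\tfrac12\E[\wh Y_{n-1}]+\tfrac12\wh c$ and the equality in law of $\wh W_n$ and $\wh Y_n$ more explicitly than the paper, which simply refers back to the Brownian case.
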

Our next aim is to compute the moments under this invariant distribution. First
we show that the invariant density of the $\wh Y$ component satisfies a
differential equation.  
\begin{lemma}
  \label{lem:our-density} 
  Let $\nu$ be the  invariant distribution of
  $\wh Y_1,\wh Y_2,\dots$ and let $\wh f_\nu$ be the corresponding density.
  Then $\wh f_\nu$ is the unique positive  decreasing solution of   
  \begin{align}
    \label{eq:ourdensity}
  \frac12 \wh f''_{\nu}(x) & = - \mu x\wh f'_\nu (x) + \gamma x \wh f_{\nu}(x) 
  \end{align}
  satisfying $\wh f_\nu''(0)=0$ and $\int_0^\infty \wh f_\nu(x)\, dx =1$. 
\end{lemma}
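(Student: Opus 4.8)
The plan is to follow the blueprint of Proposition~\ref{prop:exp-under-pi} (cf.\ also \citep[][Section~5.3]{DepperschmidtPfaffelhuber:2010}). First I would read off the transition law of the chain $(\wh Y_n)_{n\ge 1}$ from the graphical construction in Definition~\ref{def:gr-constr-OUR}: given $\wh Y_{n-1}$, the next value satisfies $\wh Y_n \stackrel{d}{=} U\cdot \wh Z^{K}_{\wh\tau-}$, where $\wh Z^{K}$ is the killed reflecting Ornstein-Uhlenbeck process started in $\wh Y_{n-1}$ (its lifetime $\wh\tau$ being the next inter-jump time) and $U$ is uniform on $(0,1)$, independent of everything else. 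Since the killing position has a density, the invariant distribution $\nu$ has a density $\wh f_\nu$, and invariance gives $Y\stackrel{d}{=} U\cdot \wh Z^{K,Y}_{\wh\tau-}$ for $Y\sim\nu$ and an independent $U\sim U(0,1)$. Inserting the density $G(x,y)\,2\gamma y e^{-\mu y^2}$ of the killing position of $\wh Z^{K,x}_{\wh\tau-}$ from \eqref{equ:density} and computing the density of $U\cdot \wh Z^{K,Y}_{\wh\tau-}$, I obtain the fixed-point equation
\begin{align*}
  \wh f_\nu(z) = 2\gamma\int_0^\infty \wh f_\nu(x)\int_z^\infty G(x,w)\,e^{-\mu w^2}\,dw\,dx.
\end{align*}

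The core of the proof is then to differentiate this identity three times. Splitting $G$ according to \eqref{D:OPUgreenfunction} and writing $Q(z):=\int_0^z \wh f_\nu(x)\wh\psi(x)\,dx$, $P(z):=\int_z^\infty \wh f_\nu(x)\wh\phi(x)\,dx$, one gets $\wh f'_\nu = -2\gamma(\wh\Phi Q + \wh\Psi P)$ with $\wh\Phi,\wh\Psi$ as in \eqref{eq:tildephiOU}--\eqref{eq:tildepsiOU}; a second differentiation makes the $Q',P'$ terms cancel because $\wh\Phi\wh\psi = \wh\Psi\wh\phi$, leaving $\wh f''_\nu = -2\gamma(\wh\Phi' Q + \wh\Psi' P)$, and in particular $\wh f''_\nu(0)=0$ since $Q(0)=0$ and $\wh\Psi'(0)=0$ by \eqref{eq:47}. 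A third differentiation, using the Wronskian relation \eqref{eq:45} for the $Q',P'$ terms and the differential equation \eqref{eq:tildeeq} to eliminate $\wh\Phi'',\wh\Psi''$, gives
\begin{align*}
  \wh f'''_\nu(z) = -2\mu z\,\wh f''_\nu(z) + 2(\gamma z-\mu)\,\wh f'_\nu(z) + 2\gamma\,\wh f_\nu(z) = \frac{d}{dz}\bigl(-2\mu z\,\wh f'_\nu(z) + 2\gamma z\,\wh f_\nu(z)\bigr).
\end{align*}
Hence $\wh f''_\nu(z) + 2\mu z\,\wh f'_\nu(z) - 2\gamma z\,\wh f_\nu(z)$ is constant in $z$; evaluating at $z=0$ identifies this constant with $\wh f''_\nu(0)=0$, which is precisely \eqref{eq:ourdensity}.

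It then remains to check the qualitative properties. From $\wh f'_\nu = -2\gamma(\wh\Phi Q + \wh\Psi P)$ and positivity of $\wh\Phi$, $\wh\Psi$, $P$, $Q$ we get $\wh f'_\nu\le 0$, and the fixed-point equation together with \eqref{eq:38} gives $\wh f_\nu(0)=\gamma\int_0^\infty \wh f_\nu(x)\,\E_x[\wh\tau]\,dx>0$, so by continuity $Q(z)>0$ for $z>0$; hence $\wh f_\nu$ is strictly decreasing and, being a non-negative density, strictly positive on $[0,\infty)$. For uniqueness I would invoke a standard analysis of the solutions of \eqref{eq:ourdensity} at $+\infty$: one fundamental solution grows like $e^{\gamma z/\mu}$ and the other decays like $e^{-\mu z^2}$, so any bounded solution --- in particular any decreasing, integrable one --- is a scalar multiple of the decaying solution, and positivity together with $\int_0^\infty\wh f_\nu=1$ then fixes the scalar uniquely.

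I expect the bookkeeping in the triple differentiation to be the main obstacle: one has to make the $Q'$ and $P'$ terms cancel twice (first via $\wh\Phi\wh\psi=\wh\Psi\wh\phi$, then via the Wronskian \eqref{eq:45}) and feed in \eqref{eq:tildeeq} so that the right-hand side collapses to a total derivative, and then verify that the integration constant vanishes --- for which the reflecting-boundary identity $\wh\Psi'(0)=0$ from \eqref{eq:47} is the essential ingredient. Everything else runs parallel to the Brownian ratchet.
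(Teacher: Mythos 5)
Your proposal is correct and follows essentially the same route as the paper: the fixed-point equation from $Y\stackrel{d}{=}U\cdot\wh Z^{K,Y}_{\wh\tau-}$, three differentiations using $\wh\Phi\wh\psi=\wh\Psi\wh\phi$, the Wronskian \eqref{eq:45} and the ODE \eqref{eq:tildeeq}, collapse to a total derivative, and the vanishing integration constant via $\wh\Psi'(0)=0$ from \eqref{eq:47}. The only cosmetic difference is that you argue uniqueness inside the proof via the growth/decay dichotomy of the fundamental solutions, which the paper records immediately after the lemma using the explicit confluent hypergeometric system \eqref{eq:49}--\eqref{eq:50}.
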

\begin{proof}
  Similar to the case of killed Brownian motion with negative drift one can
  write down a recurrence equation for $\wh f_\nu$. We have   
  \begin{align}
    \label{eq:ourdensity0}
    \begin{split} 
      \wh f_\nu(z)&=   \int_0^\infty \wh f_{\nu}(x) \int_z^\infty G(x,u) \,du \, dx 
    \end{split} \\ \intertext{and} 
    \label{eq:ourdensity1}
    \wh f'_\nu(z)&= 2\gamma \Bigl( - \wh \Psi(z) \int_z^\infty
    \wh f_\nu(x)  \wh\phi(x) \, dx - \wh \Phi (z) \int_0^z \wh f_\nu(x)  \wh\psi (x)  
    \,dx \Bigr)\\ 
    \label{eq:ourdensity2}
    \wh  f''_\nu(z)&=  2\gamma  \Bigl( - \wh \Psi'(z)
    \int_z^\infty \wh f_\nu(x)  \wh\phi(x) \, dx - \wh \Phi'(z) \int_0^z \wh
    f_\nu(x) \wh\psi(x) \, dx \Bigr)\\ 
    \label{eq:ourdensity3}
    \wh f'''_\nu(z)&= 2\gamma (x \wh f_{\nu}(x))' - 2\mu(x\wh f'_\nu (x))'.
  \end{align}
  The equations \eqref{eq:ourdensity0}--\eqref{eq:ourdensity2} are analogous to 
  the case of the Brownian ratchet with negative drift. We only give some
  details on how we obtain \eqref{eq:ourdensity3}.   

  Differentiating \eqref{eq:ourdensity2} and using 
  the fact that $\wh\Phi$ and $\wh \Psi$ solve \eqref{eq:tildeeq} we obtain
  \begin{align*} 
    \frac{1}{2\gamma} \wh f'''_{\nu}(x) & = - \wh \Psi''(x) \int_x^\infty
    \wh  f_{\nu}(z) \wh\phi(z) \, dz + \wh\Psi'(x) \wh f_\nu(x) \wh\phi(x) \\    
    & \quad - \wh \Phi''(x) \int_0^x \wh f_{\nu(z)}  \wh\psi(z) \, dz - \wh \Phi'(x)
    \wh f_\nu(x) \wh\psi(x)  \\   
    & = - 2 \bigl((\gamma x -\mu )\wh \Psi(x) - \mu x \wh\Psi'(x)\bigr)
    \int_x^\infty \wh f_{\nu}(z)  \wh\phi(z) \, dz +  \wh\Psi'(x) \wh f_{\nu}(x)
    \wh\phi(x) \\    
    & \quad -2 \bigl((\gamma x -\mu)\wh \Phi(x) - \mu x 
    \wh\Phi'(x)\bigr)  \int_0^x \wh f_{\nu}(z)  \wh\psi(z) \, dz - \wh\Phi'(x)
    \wh f_{\nu}(x)  \wh\psi(x)\\   
    & = \wh f_{\nu}(x)(\wh\Psi'(x)\wh\phi(x) -  \wh\Phi'(x)\wh\psi(x)) \\
    & \quad + 2 (\gamma x-\mu) \Bigl(- \wh \Psi(x) \int_x^\infty \wh f_{\nu}(z)
    \wh\phi(z) \, dz-\wh \Phi(x) \int_0^x \wh f_{\nu}(z) \wh\psi(z) \, dz \Bigr)\\ 
    & \quad +2\mu x \Bigl(\wh\Psi'(x) \int_x^\infty \wh f_{\nu}(z) \wh\phi(z)\,
    dz+ \wh\Phi'(x)  \int_0^x \wh f_{\nu}(z)  \wh\psi(z) \,  dz\Bigr).    
  \end{align*}
  Thus, using \eqref{eq:45}, \eqref{eq:ourdensity1} and \eqref{eq:ourdensity2}
  we obtain
  \begin{align*} 
    \wh f'''_{\nu}(x) & = 2 \gamma \wh f_{\nu}(x)+ 2(\gamma x- \mu) \wh
    f'_{\nu}(x) - 2\mu x \wh f''_{\nu}(x)\\ 
    & = 2 \gamma (x \wh f'_{\nu}(x) + \wh f_{\nu}(x)) -2 \mu(x\wh f''_{\nu}(x)+
    \wh f'_{\nu}(x))\\   
    & =  2 \gamma \bigl(x \wh f_{\nu}(x)\bigr)' -2 \mu\bigl(x \wh f'_\nu (x)\bigr)',  
  \end{align*}
  which shows \eqref{eq:ourdensity3}. 

  Now integrating $f'''_{\nu}(x)$ we obtain 
  \begin{align} \label{eq:dens-ode}
   \wh f''_{\nu}(x) & =  2 \gamma x \wh f_{\nu}(x) -2 \mu x \wh f'_\nu(x).    
  \end{align}
  Here, the integration constant is zero because by \eqref{eq:ourdensity2} and
  \eqref{eq:47} it follows that $\wh f''_{\nu}(0) = 0$. By \eqref{eq:ourdensity1}
  $x\mapsto \wh f_\nu(x)$ is decreasing. This concludes the proof.    
\end{proof}

Recall $p(x)$ in \eqref{eq:40}. One can check that the general solution of 
\eqref{eq:ourdensity} is given by  $e^{-\gamma x/\mu -\mu x^2}$ multiplied by a
linear combination of  
\begin{align} 
  \label{eq:49}
   M\left(-\frac{\gamma^2}{4\mu^3}+\frac12,\frac12,p^2(x)\right)\quad  \text{and}
   \quad 
   U\left(-\frac{\gamma^2}{4\mu^3}+\frac12,\frac12,p^2(x) \right)  \\   
  \intertext{if $\frac{\gamma^2}{4\mu^3} -\frac12 \ne 1, 2,\dots$, and}  
  \label{eq:50} 
   p(x) M \left(-\frac{\gamma^2}{4\mu^3}+1,\frac32,p^2(x) \right) \quad  \text{and}
   \quad  U \left(-\frac{\gamma^2}{4\mu^3}+\frac12,\frac12,p^2(x) \right) 
\end{align}
if $\frac{\gamma^2}{4\mu^3} -\frac12 = 1, 2,\dots$. In view of
\eqref{eq:M-asympt} the modulus of any solution containing a non-zero proportion
of $M$ in the above cases behaves (up to a polynomial factor) as $e^{\gamma
  x/\mu}$ as $x \to \infty$ and therefore cannot converge to $0$ for $x \to
\infty$. Thus, the density of the invariant distribution of the $\wh Y$ component of
the Markov chain $(\wh{\mathcal Y}, \wh{\mathcal W}, \wh \eta)$ is given by   
\begin{align}
  \label{eq:53}
  \wh f_\nu(x) & = \frac1{\int_0^\infty h_{\mu,\gamma}(y)\,dy} h_{\mu,\gamma}(x), \\ 
  \intertext{where} \label{eq:33} 
  h_{\mu,\gamma}(x) & \coloneqq    e^{-\gamma x/\mu -\mu x^2} U
  \left(-\frac{\gamma^2}{4\mu^3}+\frac12,\frac12,p^2(x) \right). 
\end{align}
The following result shows that $\wh f_\nu$ defined in \eqref{eq:53} satisfies
conditions stated in Lemma~\ref{lem:our-density}.  
\begin{lemma} 
We have 
\begin{enumerate}
\item $h_{\mu,\gamma}$ is positive on $[0,\infty)$,   
\item $h_{\mu,\gamma}''(0)=0$,  
\item $\int_0^\infty h_{\mu,\gamma}(x) \,dx<\infty$, 
\item $h'_{\mu,\gamma}(x) < 0$ for $x >0$.
\end{enumerate}
\end{lemma}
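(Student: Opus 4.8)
The plan is to read everything off the fact, recorded just before the lemma, that the general solution of \eqref{eq:ourdensity} is $e^{-\gamma x/\mu-\mu x^2}$ times a linear combination of the Kummer solutions in \eqref{eq:49}--\eqref{eq:50}. Since $U\bigl(\tfrac12-\tfrac{\gamma^2}{4\mu^3},\tfrac12,\cdot\bigr)$ is always one of those solutions (in the integer case it appears in \eqref{eq:50}), $h:=h_{\mu,\gamma}$ is a smooth solution of \eqref{eq:ourdensity}, i.e.\
\[
  \tfrac12 h''(x)=-\mu x\,h'(x)+\gamma x\,h(x),\qquad x\ge 0 .
\]
Evaluating this at $x=0$ gives $h''(0)=0$, which is (ii). For (iii) I would use \eqref{eq:U-asympt}: as $x\to\infty$ one has $U\bigl(\tfrac12-\tfrac{\gamma^2}{4\mu^3},\tfrac12,p^2(x)\bigr)\sim\bigl(p^2(x)\bigr)^{\gamma^2/(4\mu^3)-1/2}$, which grows at most polynomially in $x$; together with the factor $e^{-\gamma x/\mu-\mu x^2}$ this shows $h$ decays super-exponentially, so $h$ is integrable on $[0,\infty)$ (it is continuous there because $p^2(x)\ge p^2(0)=\gamma^2/\mu^3>0$ keeps us away from the singularity of $U$ at the origin). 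In particular $h(x)\to 0$ as $x\to\infty$, a fact I record for use in (iv).

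For (i) I would apply the Kummer transformation \eqref{eq:Kummer} with $b=\tfrac12$ to rewrite
\[
  h(x)=p(x)\,e^{-\gamma x/\mu-\mu x^2}\,U\Bigl(1-\tfrac{\gamma^2}{4\mu^3},\tfrac32,p^2(x)\Bigr),
\]
using $\bigl(p^2(x)\bigr)^{1/2}=p(x)>0$. Since the prefactors are strictly positive and $p^2(x)$ ranges over $[\gamma^2/\mu^3,\infty)$ as $x$ ranges over $[0,\infty)$, positivity of $h$ on $[0,\infty)$ is equivalent to $U\bigl(1-\tfrac{\gamma^2}{4\mu^3},\tfrac32,y\bigr)>0$ for all $y\ge\gamma^2/\mu^3$. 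But this is exactly the statement proved inside the proof of Lemma~\ref{lem:whphi-decr}: there one shows, via the explicit values at $\tfrac{\gamma^2}{4\mu^3}\in\{\tfrac12,1\}$, via \eqref{eq:55} and Remark~\ref{rem:confl-funct}(iv) when $\tfrac{\gamma^2}{4\mu^3}\in(0,1)\setminus\{\tfrac12\}$, and via the zero bound \eqref{eq:48} when $\tfrac{\gamma^2}{4\mu^3}>1$, that $U\bigl(1-\tfrac{\gamma^2}{4\mu^3},\tfrac32,\cdot\bigr)$ has no zero on $[\gamma^2/\mu^3,\infty)$; combined with \eqref{eq:U-asympt} and continuity this yields positivity there. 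So one may either reproduce that three-case argument verbatim or simply quote $\wh\phi''>0$ on $(0,\infty)$ from Lemma~\ref{lem:whphi-decr} and supply the value at $x=0$ by continuity of $U$.

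The main point is (iv), and I would obtain it by an integrating-factor argument rather than by manipulating confluent hypergeometric functions. Multiplying the displayed ODE by $2e^{\mu x^2}$ and regrouping gives
\[
  \bigl(e^{\mu x^2}h'(x)\bigr)'=2\gamma x\,e^{\mu x^2}h(x).
\]
By (i) the right-hand side is strictly positive for $x>0$, so $x\mapsto e^{\mu x^2}h'(x)$ is strictly increasing on $(0,\infty)$ and therefore tends to some $L\in(-\infty,+\infty]$. If $L>0$ then $h'(x)>0$ for all large $x$, so $h$ is eventually strictly increasing; being positive by (i), it then cannot tend to $0$, contradicting (iii). Hence $L\le 0$, and a strictly increasing function with supremum $L\le 0$ is negative throughout $(0,\infty)$; thus $e^{\mu x^2}h'(x)<0$, i.e.\ $h'(x)<0$, for every $x>0$, which is (iv).

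The only genuinely delicate ingredient is (i), since it rests on controlling the positive zeros of a Tricomi confluent hypergeometric function; but that analysis has already been carried out in Lemma~\ref{lem:whphi-decr}, so here it reduces to one Kummer transformation plus a cross-reference. Everything else — the ODE characterization of $h$, the value $h''(0)=0$, the super-exponential decay estimate, and the integrating-factor monotonicity argument for $h'$ — is routine.
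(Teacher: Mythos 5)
Your proposal is correct, and for parts (i)--(iii) it follows the paper's own route: (ii) and (iii) read off from the fact that $h_{\mu,\gamma}$ solves \eqref{eq:ourdensity} together with the asymptotics \eqref{eq:U-asympt}, and (i) via the Kummer transformation \eqref{eq:Kummer} reducing positivity to the zero-freeness of $x\mapsto U\bigl(1-\tfrac{\gamma^2}{4\mu^3},\tfrac32,p^2(x)\bigr)$ already established in the proof of Lemma~\ref{lem:whphi-decr}. Where you genuinely diverge is (iv). The paper proves convexity of $h_{\mu,\gamma}$: it computes $\tfrac1{2x}h''(x)$ using \eqref{eq:diff-U} and the recurrence relation \eqref{eq:51}, reduces via the Kummer transformation to positivity of $U\bigl(-\tfrac{\gamma^2}{4\mu^3},\tfrac12,p^2(x)\bigr)$ (again quoting Lemma~\ref{lem:whphi-decr}), and then concludes $h'<0$ from convexity, positivity and decay at infinity. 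Your integrating-factor argument, $\bigl(e^{\mu x^2}h'(x)\bigr)'=2\gamma x\,e^{\mu x^2}h(x)>0$, so that $e^{\mu x^2}h'$ is strictly increasing and its limit must be $\le 0$ because $h>0$ and $h(x)\to 0$, reaches the same conclusion without any further confluent-hypergeometric manipulations: it needs only (i), the decay from (iii), and the ODE itself, so it is more elementary and avoids the second appeal to the zero analysis and to \eqref{eq:51}; what it gives up is the convexity of $h_{\mu,\gamma}$, which the paper obtains as a by-product (though it is not used elsewhere). One small point worth making explicit: strict positivity of the right-hand side in your identity uses $\gamma>0$, which is the standing assumption in this subsection (the paper's proof likewise fixes $\gamma,\mu>0$), so this is not a gap, just worth stating.
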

\begin{proof}
  Let $\gamma, \mu>0$ be given. Throughout the proof we write $h$ for
  $h_{\mu,\gamma}$. The assertions (ii) and (iii) hold because $h$ is a 
  solution of \eqref{eq:ourdensity} and because by \eqref{eq:U-asympt} $h(x)
  \sim e^{-\gamma x/\mu -\mu x^2}$ as $x \to \infty$.  

  To show (i) we first observe that by \eqref{eq:U-asympt} $h(x) >0$ for
  sufficiently large $x$. Thus, it is enough to show that $h$ has no positive
  zeros. By the Kummer transformation \eqref{eq:Kummer} 
  the function $h$ has positive zeros if and only if the function $x \mapsto
  U(1-\frac{\gamma^2}{4\mu^3},\frac32,p^2(x))$ has positive zeros which is not
  the case as we have seen in the proof of Lemma~\ref{lem:whphi-decr}. 

  To show (iv) we show that $h$ is convex, i.e.\ $h''(x) >0$ for $x >0$. Then 
  (iv) follows from (i) and from $h(x) \to 0$ as $x \to \infty$. For $x > 0$ we
  have  
  \begin{align*}
    \frac1{2x}  h''(x) & = -\mu h'(x) +\gamma h (x) \\
    & = -\mu \Bigl((-\frac\gamma\mu -2\mu x) h (x) \\ & \qquad + e^{-\gamma x
      /\mu -\mu x^2} 2 \sqrt\mu (\frac\gamma{\mu^{3/2}}+\sqrt\mu x) (-(\frac12
    -\frac{\gamma^2}{4\mu^3})) U (\frac32-\frac{\gamma^2}{4\mu^3},\frac32,p^2(x)
    )\Bigr) + \gamma h (x) \\ & = 2 (\gamma + \mu^2 x) \Bigl(h(x) +
    e^{-\gamma x /\mu -\mu x^2} (\frac12 -\frac{\gamma^2}{4\mu^3})U
    (\frac32-\frac{\gamma^2}{4\mu^3},\frac32,p^2(x) )\Bigr) \\ & = 2 (\gamma +
    \mu^2 x) e^{-\gamma x /\mu -\mu x^2} \Bigl(U
    (\frac12-\frac{\gamma^2}{4\mu^3},\frac12,p^2(x) ) + (\frac12
    -\frac{\gamma^2}{4\mu^3})U
    (\frac32-\frac{\gamma^2}{4\mu^3},\frac32,p^2(x) )\Bigr)\\
    & = 2 (\gamma + \mu^2 x) e^{-\gamma x /\mu -\mu x^2} U
    (\frac12-\frac{\gamma^2}{4\mu^3},\frac32,p^2(x)),
  \end{align*}
  where the last equality follows from \eqref{eq:51}. Now again by the Kummer
  transformation $h''$ is positive on $(0,\infty)$ if and only if the function
  $x \mapsto U (-\frac{\gamma^2}{4\mu^3},\frac12,p^2(x))$ is positive on that
  interval. This was shown in Lemma~\ref{lem:whphi-decr}. 
\end{proof}

\subsection{Proof of Theorem~\ref{theoremOU}}
\label{sec:proof34}

The proof in the case of the Ornstein-Uhlenbeck ratchet is almost the same as in
the case of the Brownian ratchet with negative drift. That is, we can again
define a sequence of regeneration times and show that the temporal and spatial
increments of the ratchet between this regeneration times have finite second
moments. All the ingredients needed for the proof of that have been provided in
the previous subsections. We content ourselves with computation of the speed
of the ratchet. To this end we need (as in Proposition~\ref{prop:exp-under-pi})
to compute the expectation of $\wh Y_1$ and of $\wh \eta_1$ under the invariant
distribution $\nu$.

Using \eqref{eq:ourdensity} we obtain 
\begin{align*}
  \E_\nu[\wh Y_1] & = \int_0^\infty x \wh f_\nu (x) \, dx = \frac1{2\gamma}
  \int_0^\infty \left(\wh f''_\nu(x) + 2\mu x \wh f_\nu'(x)\right) \, dx  \\ 
  & = \frac1{2\gamma} \left(- \wh f'_\nu(0) - 2\mu \int_0^\infty \wh f_\nu(x)\, dx\right) =
  \frac1{2\gamma} \left(- \wh f'_\nu(0) - 2\mu\right).  
\end{align*}
Furthermore, recalling \eqref{eq:38}, we have using Fubini's Theorem in the second
equality and \eqref{eq:ourdensity1} in the third  
\begin{align*}
  \E_\nu[\wh\eta_1] & = 2 \int_0^\infty \wh f_\nu(x) \left(\wh \phi(x) \int_0^x
    \wh\Psi(y)\,dy +
    \wh\psi(x) \int_x^\infty \wh\Phi(y)\, dy \right) \, dx \\
  & = 2 \int_0^\infty \left(\wh\Psi(y) \int_y^\infty \wh f_\nu(x) \wh \phi(x)
    \,dx + \wh\Phi(y) \int_0^y \wh f_\nu(x) \wh\psi(x) \, dx \right) \, dy \\
  &= - \frac1\gamma  \int_0^\infty \wh f_\nu'(y) \, dy  = \frac{\wh
    f_\nu(0)}{\gamma}. 
\end{align*}
Now the speed of the Ornstein-Uhlenbeck ratchet is given by 
\begin{align*}
  \wh v (\mu,\gamma) \coloneqq \frac{\E_\nu[\wh Y_1]}{\E_\nu[\wh\eta_1]}=
  - \frac{\wh f'_\nu(0) + 2\mu}{2\wh f_\nu(0)} =  - \frac{h'_{\mu,\gamma}(0)}{2h_{\mu,\gamma}(0)} -
  \frac{\mu \int_0^\infty h_{\mu,\gamma}(x)\, dx }{h_{\mu,\gamma}(0)}.
\end{align*}
\qed

 \section*{Acknowledgements}
 The authors would like to thank Peter Pfaffelhuber and Martin Kolb for fruitful
 discussions.


\end{document}